\newcommand{\half}{\frac{1}{2}}
\begin{document} 
\newtheorem{prop}{Proposition}[section]
\newtheorem{Def}{Definition}[section]
\newtheorem{theorem}{Theorem}[section]
\newtheorem{lemma}{Lemma}[section]
 \newtheorem{Cor}{Corollary}[section]

\title[Klein-Gordon-Zakharov]{\bf Local well-posedness for the Klein-Gordon-Zakharov system in 3D}
\author[Hartmut Pecher]{
{\bf Hartmut Pecher}\\
Fakult\"at f\"ur  Mathematik und Naturwissenschaften\\
Bergische Universit\"at Wuppertal\\
Gau{\ss}str.  20\\
42119 Wuppertal\\
Germany\\
e-mail {\tt pecher@math.uni-wuppertal.de}}
\date{}

\begin{abstract}
	We study the Cauchy problem for the Klein-Gordon-Zakharov system in 3D with low regularity data. We lower down the regularity to the critical value with respect to scaling up to the endpoint. The decisive bilinear estimates are proved by means of methods developed by Bejenaru-Herr for the Zakharov system and already applied by Kinoshita to the Klein-Gordon-Zakharov system in 2D.

\end{abstract}
\maketitle
\renewcommand{\thefootnote}{\fnsymbol{footnote}}
\footnotetext{\hspace{-1.5em}{\it 2010 Mathematics Subject Classification:} 
35Q55, 35A01 \\
{\it Key words and phrases:} Klein-Gordon-Zakharov,  
local well-posedness, low regularity}
\normalsize 
\setcounter{section}{0}

\section{Introduction}
Consider the Cauchy problem for the Klein-Gordon-Zakharov system in three space dimensions:
\begin{align}
\label{1.1}
\partial_t^2 u - \Delta u + u &= -nu \\
\nonumber \partial_t^2 n - c^2 \Delta n & = \Delta(|u|^2)
\end{align}
on condition that $ 0 < c < 1$ with Cauchy data
\begin{equation}
\label{1.1'}
u(0) = u_0 \, , \, (\partial_t u)(0) = u_1 \, , \, n(0) = n_0 \, , \, (\partial_t n)(0)= n_1 \, .
\end{equation}
Here $u$ and $n$ are real-valued functions.
Assume that the data belong to the following standard Sobolev spaces:
\begin{equation}
\label{1.1''}
 u_0 \in H^{s+1} \, , \, u_1 \in H^s \, , \, n_0 \in H^s \, , \, n_1 \in H^{s-1} \, . 
 \end{equation}
It is standard to transform the system into a first order (in t) system. \\
Let $A:= -\Delta + 1$ .
Define $u_{\pm} = u \pm i A^{-\half} \partial_t u $ , $ n_{\pm} = n \pm i c^{-1} A^{-\half} n $ , so that $u=\half(u_++u_-)$ , $ n =\half( n_++n_-)$ . Then the system (\ref{1.1}) is equivalent to
\begin{align}
\label{1.2}
i \partial_t u_{\pm} \mp A^{\half} u_{\pm} & = \pm  \frac{1}{4} A^{-\half} ((n_++n_-)(u_++u_-)) \\
\nonumber
i \partial_t n_{\pm} \mp c A^{\half} n_{\pm} & = \mp \frac{1}{4c} \Delta A^{-\half} |u_++u_-|^2 \mp \frac{c}{2} A^{-\half} (n_++n_-) \, .
\end{align}  The transformed Cauchy data are 
$$u_{\pm_0} = u_0 \pm iA^{-\half}u_1 \in H^{s+1} \, , \, n_{\pm_0} = n_0 \pm i c^{-1}A^{-\half} n_1 \in H^s \, . $$

The Klein–Gordon–Zakharov system describes the interaction between
Langmuir waves and ion sound waves in a plasma. In this application the constant $c$ actually fulfills $ c < 1$ (see Masmoudi-Nakanishi \cite{MN}).

We are interested in low regularity well.-posedness results.

In 2D Tsugawa proved local well-posedness  for
$ s \ge - \half$ . Also in 2D S. Kinoshita \cite{K} substantially improved this result to $ s > - \frac{3}{4}$ . Crucial for this improvement were bilinear estimates given  by Bejenaru-Herr-Holmer-Tataru \cite{BHHT} in order to prove an optimal local well-posedness result for the related Zakharov system in 2D based on a result by Bejenaru-Herr-Tataru \cite{BHT}. Kinoshita was also able to prove that this result is optimal up to the endpoint. 

I. Kato [8]  proved that (\ref{1.1}) is
locally well-posed at $s = \frac{1}{4}$, when the space dimension $ d = 4 $ , and $s = s_c + \frac{1}{d + 1}$, when $d \ge 5$, where
$s_c =  \frac{d}{2}-2$ is the critical exponent of (\ref{1.1})with respect to scaling.

The assumption $0<c<1$ is crucial in all these results.

In 3D, Ozawa, Tsutaya and Tsutsumi \cite{OTT}  proved that (\ref{1.1}) is globally well-posed in
the energy space $H^1 \times L^2 \times L^2 \times {\dot H}^{-1}$. It was very important for their argument to assume different propagations speeds ($c \neq 1$).
As far as I know there were no well-posedness results for data with $s < 0$ , although  the critical exponent with respect to scaling is $s=-\half$ . 
This is easily seen by considering the rescaling $u_{\lambda}(x,t)= \lambda u(\lambda x, \lambda t)$ , $n_{\lambda}(x,t)= \lambda^2 n(\lambda x, \lambda t)$ for the system ignoring the linear term $u$ in (\ref{1.1}), which plays no role as long local results are concerned.
The aim of the present paper is to lower down the regularity assumptions on the data and to close this gap as far as possible in the case $0<c<1$ . In fact the main result shows local well-posedness for $s >-\half$ , thus leaving open only the critical case $s=-\half$ . The proof combines the method used by Bejenaru-Herr \cite{BH} for their optimal well-posedness result for the 3D Zakharov system and  Kinoshita`s approach for the optimal well-posedness result for the 2D Klein-Gordon-Zakharov system \cite{K}. Bejenaru-Herr introduced a suitable additional decomposition with respect to angular variables in frequency space, which also  plays a fundamental role both in Kinoshita's article and in our paper.

We define the solution spaces of Bourgain - Klainerman - Machedon type as
$$X^{s,b}_{\pm} = \{ u \in \mathcal{S}'(\mathbb{R}^4) : \|u\|_{X^{s,b}_{\pm}} < \infty \} \, , $$
where $$\|u\|_{X^{s,b}_{\pm}} = \| \langle \xi \rangle^s \langle \tau \pm |\xi| \rangle^b \tilde{u}(\xi,\tau)\|_{L^2_{\xi \tau}} $$
and $\tilde{u}$ denotes the space-time Fourier transform of $u$ , and similarly
$X^{s,b}_{\pm,c}$ with norm
$$\|n\|_{X^{s,b}_{\pm,c}} =  \| \langle \xi \rangle^s \langle \tau \pm c|\xi| \rangle^b \tilde{u}(\xi,\tau)\|_{L^2_{\xi \tau}} \, .$$
We also define $X^{s,b}_{\pm}[0,T]$ and $X^{s,b}_{\pm,c}[0,T]$ as the space of the restrictions to $[0,T] \times \mathbb{R}^4$.

Now we formulate our main theorem. 
\begin{theorem}
	\label{Theorem0.1}
	Assume $0 < c < 1 $ and $ s > -\half$ .
	The data are assumed to fulfill $u_{\pm_0} \in H^{s+1}(\mathbb{R}^3)$ , $n_{\pm_0} \in H^s(\mathbb{R}^3)$. Then there exists $ b > \frac{1}{2}$ , $T > 0 $ ,  $T=T(\|u_{\pm_0}\|_{H^{s+1}},
	\|n_{\pm_0}\|_{H^s})$ , such that the problem (\ref{1.2}) with Cauchy data $u_{\pm_0}$ and $n_{\pm_0}$ has a unique local solution
	$$ u_{\pm} \in X^{s+1,b}_{\pm}[0,T]  \, , \,
	n_{\pm} \in X^{s,b}_{\pm,c}[0,T] \, .$$
	 
\end{theorem}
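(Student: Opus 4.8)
The plan is to set up the standard contraction-mapping (Picard iteration) argument in the Bourgain–Klainerman–Machedon spaces $X^{s+1,b}_\pm[0,T]$ and $X^{s,b}_{\pm,c}[0,T]$ with $b>\frac12$. Writing the system \eqref{1.2} in Duhamel form, we seek a fixed point of the map that sends $(u_\pm,n_\pm)$ to the solution of the linear equations with the quadratic terms $A^{-1/2}((n_++n_-)(u_++u_-))$, $\Delta A^{-1/2}|u_++u_-|^2$, and the linear term $A^{-1/2}(n_++n_-)$ plugged in as forcing. The linear estimates for the free evolution and the Duhamel integral in $X^{s,b}$ spaces are classical (for $b>\frac12$ one gains a small positive power $T^\varepsilon$ from restriction to $[0,T]$, which produces the contraction factor), so the whole theorem reduces to multilinear estimates. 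The lower-order linear term $\mp\frac{c}{2}A^{-1/2}(n_++n_-)$ maps $H^s\to H^{s+1}\subset H^s$ and is harmless; the genuine content is two bilinear estimates.

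The two bilinear estimates to establish are, schematically,
\begin{align}
\nonumber \|A^{-\half}(n\,u)\|_{X^{s+1,b-1}_{\pm_0}} &\lesssim \|n\|_{X^{s,b}_{\pm_1,c}}\,\|u\|_{X^{s+1,b}_{\pm_2}}, \\
\nonumber \|\Delta A^{-\half}(u_1 u_2)\|_{X^{s,b-1}_{\pm_0,c}} &\lesssim \|u_1\|_{X^{s+1,b}_{\pm_1}}\,\|u_2\|_{X^{s+1,b}_{\pm_2}},
\end{align}
for all sign combinations $\pm_0,\pm_1,\pm_2$, with $b$ slightly above $\frac12$ and $s>-\frac12$. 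The first is the "wave $\times$ Klein-Gordon feeds the Klein-Gordon" interaction, the second is "Klein-Gordon $\times$ Klein-Gordon feeds the wave". I would reduce each to a weighted convolution estimate on the Fourier side after dyadic (Littlewood–Paley) decomposition of the three frequencies $|\xi_j|\sim N_j$ and of the three modulation variables $\langle\tau_j\pm_j(\cdot)|\xi_j|\rangle\sim L_j$, keeping track of the resonance function (the algebraic identity relating $\tau_0-\tau_1-\tau_2$, the symbols $|\xi_0|$, $c|\xi_1|$, $|\xi_2|$ etc., which forces $\max L_j\gtrsim$ the modulus of the resonance function). The dangerous regime is high$\times$high$\to$low or high$\times$low$\to$high with all modulations small; there the key is that the different propagation speeds $c\neq 1$, together with the Klein–Gordon "mass gap" coming from $\langle\xi\rangle$ versus $|\xi|$, keep the resonance function large enough — this is precisely where the hypothesis $0<c<1$ enters decisively.

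The main obstacle, and the part where Bejenaru–Herr's and Kinoshita's machinery is indispensable, is the borderline case where the resonance function degenerates: the interacting frequencies become nearly parallel (or anti-parallel) and nearly on the relevant characteristic cones simultaneously. Here a crude Cauchy–Schwarz in the modulation variables loses, and one must add an extra decomposition of the spheres $|\xi_j|\sim N_j$ into angular caps (of an appropriately chosen aperture, balanced against the modulation sizes), exploiting transversality of the two cones on separated caps via bilinear $L^2$-type (restriction) estimates, while the nearly-parallel caps are controlled because there the resonance function is in fact bounded below. I would organize the angular decomposition following \cite{BH}, prove the requisite $L^2_{xt}$ bilinear estimates for products of dyadically and angularly localized free waves, sum over caps using almost-orthogonality, and finally sum the dyadic pieces — the $N^{s+1}$ and $N^s$ weights together with the $\Delta A^{-1/2}=|\xi|^2\langle\xi\rangle^{-1}$ and $A^{-1/2}=\langle\xi\rangle^{-1}$ symbols must close the summation for every $s>-\frac12$, with the logarithmic divergence at $s=-\frac12$ explaining why the endpoint is excluded. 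Once both bilinear estimates hold, the contraction map is a standard consequence, giving existence, uniqueness in the indicated spaces, and the stated dependence of $T$ on the data norms.
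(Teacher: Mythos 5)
Your proposal follows essentially the same route as the paper: Theorem \ref{Theorem0.1} is reduced, via the standard contraction argument in $X^{s,b}$-spaces with $b>\half$, to the two bilinear estimates, which after commuting the symbols $A^{-1/2}$ and $\Delta A^{-1/2}$ are exactly \eqref{1.3} and \eqref{1.4} of Proposition \ref{Theorem2.1}, and your outline of their proof (dyadic plus angular cap decomposition, transversality and bilinear $L^2$ convolution estimates in the spirit of Bejenaru--Herr and Kinoshita, with $0<c<1$ providing the resonance lower bound in the remaining regimes) is precisely the method the paper carries out. The only caveats are that the hard analytic content (Proposition \ref{Theorem2.1}) is left at sketch level, and the left-hand modulation exponent should be $b-1+$ rather than $b-1$ to leave the slack needed for the $T^{\epsilon}$ contraction factor, both consistent with the paper's setup.
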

An immediate consequence is 
\begin{Cor}
	\label{Cor.1}
	Assume $0<c<1$ and $s > - \half$ . Let the data fulfill (\ref{1.1''}) .
	The Cauchy problem problem (\ref{1.1}),(\ref{1.1'}) has a unique local solution
	$$u \in X^{s+1,b}_+[0,T] + X^{s+1,b}_-[0,T] \, , \, n \in X^{s,b}_+[0,T] + X^{s,b}_-[0,T] \, , $$
	where $ b > \half$ . This solution has the property
	$$ u \in C^0([0,T],H^{s+1}) \cap C^1([0,T],H^s) \, , \,  n \in C^0([0,T],H^{s}) \cap C^1([0,T],H^{s-1}) \, . $$
	\end{Cor}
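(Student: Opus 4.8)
The plan is to deduce the corollary from Theorem \ref{Theorem0.1} by reversing the reduction to the first order system (\ref{1.2}). Set $u_{\pm_0} = u_0 \pm i A^{-\half} u_1$ and $n_{\pm_0} = n_0 \pm i c^{-1} A^{-\half} n_1$. Since $A^{-\half}$ maps $H^\sigma$ boundedly into $H^{\sigma+1}$, the hypotheses (\ref{1.1''}) give $u_{\pm_0} \in H^{s+1}$ and $n_{\pm_0} \in H^s$, so Theorem \ref{Theorem0.1} supplies $b>\half$, $T>0$ and a unique solution $(u_+,u_-,n_+,n_-)$ of (\ref{1.2}) with this data, with $u_\pm \in X^{s+1,b}_\pm[0,T]$ and $n_\pm \in X^{s,b}_{\pm,c}[0,T]$. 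I then define $u := \half(u_+ + u_-)$ and $n := \half(n_+ + n_-)$, which lie in the sum spaces claimed in the corollary.

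Before checking the equations I would verify that $u$ and $n$ are real-valued. Taking complex conjugates in (\ref{1.2}) and using that $u_0,u_1,n_0,n_1$ are real, one sees that $(\overline{u_-},\overline{u_+},\overline{n_-},\overline{n_+})$ again solves (\ref{1.2}) with exactly the same data, the effect of conjugation being only to interchange the labels $+$ and $-$ while the quadratic terms keep their form; hence the uniqueness in Theorem \ref{Theorem0.1} forces $u_- = \overline{u_+}$ and $n_- = \overline{n_+}$, so that $u = \half(u_+ + \overline{u_+})$ and $n = \half(n_+ + \overline{n_+})$ are real. To show $(u,n)$ solves (\ref{1.1}), (\ref{1.1'}), I would add and subtract the two $u_\pm$-equations in (\ref{1.2}): in the sum the quadratic terms cancel and give $\partial_t u = \tfrac{1}{2i} A^{\half}(u_+ - u_-)$; differentiating once more, re-inserting (\ref{1.2}), and using $u_+ + u_- = 2u$, $n_+ + n_- = 2n$, yields $\partial_t^2 u = -Au - nu$, i.e. $\partial_t^2 u - \Delta u + u = -nu$. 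The same bookkeeping with the $n_\pm$-equations, now also using $|u_+ + u_-|^2 = 4|u|^2$ and that $A^{\half}$ commutes with $\Delta$ and $A^{-\half}$, gives $\partial_t^2 n - c^2 \Delta n = \Delta(|u|^2)$. The Cauchy data come out right from $u(0) = \half(u_{+_0}+u_{-_0}) = u_0$, $\partial_t u(0) = \tfrac{1}{2i}A^{\half}(u_{+_0}-u_{-_0}) = u_1$, and likewise $n(0) = n_0$, $\partial_t n(0) = n_1$.

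Uniqueness is inherited from Theorem \ref{Theorem0.1}: the transformation $(u,\partial_t u,n,\partial_t n) \leftrightarrow (u_\pm,n_\pm)$ given by $u_\pm = u \pm iA^{-\half}\partial_t u$, $n_\pm = n \pm ic^{-1}A^{-\half}\partial_t n$ is a linear bijection, with bounded inverse, between the solution class of the corollary and that of Theorem \ref{Theorem0.1}, carrying (\ref{1.1}), (\ref{1.1'}) into (\ref{1.2}); hence the solution just built is the only one. The regularity assertions follow from the standard embeddings $X^{\sigma,b}_\pm[0,T] \hookrightarrow C^0([0,T],H^\sigma)$ and $X^{\sigma,b}_{\pm,c}[0,T] \hookrightarrow C^0([0,T],H^\sigma)$, valid for $b>\half$: they give $u_\pm \in C^0([0,T],H^{s+1})$ and $n_\pm \in C^0([0,T],H^s)$, hence $u \in C^0([0,T],H^{s+1})$ and $n \in C^0([0,T],H^s)$, while by the identities above $\partial_t u = \tfrac{1}{2i}A^{\half}(u_+-u_-) \in C^0([0,T],H^s)$ and $\partial_t n = \tfrac{c}{2i}A^{\half}(n_+-n_-) \in C^0([0,T],H^{s-1})$, since $A^{\half}$ is bounded from $H^\sigma$ to $H^{\sigma-1}$. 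Everything here is routine; the only point calling for a little thought is the reality argument, which rests on the uniqueness statement of Theorem \ref{Theorem0.1} (and, relatedly, on verifying that the change of variables really maps the two solution classes onto one another).
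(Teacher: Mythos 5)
Your proposal is correct and follows essentially the same route as the paper, which simply invokes the standard transfer argument: invert the first-order reduction $u_\pm = u \pm iA^{-\half}\partial_t u$, $n_\pm = n \pm ic^{-1}A^{-\half}\partial_t n$, apply Theorem \ref{Theorem0.1}, and use the embedding $X^{\sigma,b}[0,T] \hookrightarrow C^0([0,T],H^\sigma)$ for $b>\half$ (the paper cites \cite{GTV} rather than writing out the algebra, the conjugation/reality argument, or the uniqueness transfer, all of which you supply correctly).
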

{\bf Remarks:} 1. The solution depends continuously on the initial data and persistence of higher regularity holds. \\
2. For $-\half < s \le 0$ it is also possible to prove by the  same arguments local well-posedness for data 
$$
u_0 \in H^{s+1} \, , \, u_1 \in H^s \, , \, n_0 \in \dot{H}^s \, , \, n_1 \in \dot{H}^{s-1} \, .  
$$
It is namely easily verified that the bilinear estimates formulated in Proposition \ref{Theorem2.1} are sufficient to cover this case. For details we refer to \cite{K}, Remarks 1 and 2.
\\[0.5em]

The decisive bilinear estimates are formulated in the following proposition.
\begin{prop}
	\label{Theorem2.1}
	Assume $s > -\half$ and $0<c<1$ . Then there exists $b \in (\half,1)$ such that
	\begin{align}
	\label{1.3}
	\|nu\|_{X^{s,b-1+}_{\pm_0}} & \lesssim \|n\|_{X^{s,b}_{\pm_1,c}} \|u\|_{X^{s+1,b}_{\pm_2}} \, , \\
	\label{1.4}
	\|u_1 u_2\|_{X^{s+1,b-1+}_{\pm_0,c}} & \lesssim \|u_1\|_{X^{s+1,b}_{\pm_1}} \|u_2\|_{X^{s+1,b}_{\pm_2}}\, ,
	\end{align}
	where $\pm_0$ , $\pm_1$ and $\pm_2$ denote independent signs.
\end{prop}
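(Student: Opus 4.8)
The plan is to reduce both estimates \eqref{1.3} and \eqref{1.4} to dyadic estimates after decomposing all three functions into Littlewood-Paley pieces in space frequency and into dyadic pieces with respect to the modulation variable $\langle\tau\pm|\xi|\rangle$ (resp. $\langle\tau\pm c|\xi|\rangle$). Writing $N_0,N_1,N_2$ for the output and input frequencies and $L_0,L_1,L_2$ for the corresponding modulations, one reduces to proving, for each admissible configuration, a bound of the form
\begin{equation*}
\Big|\int \widehat{f_0}\,\widehat{f_1}\,\widehat{f_2}\Big| \lesssim C(N_0,N_1,N_2,L_0,L_1,L_2)\,\|f_0\|_{L^2}\|f_1\|_{L^2}\|f_2\|_{L^2}
\end{equation*}
with the constant summable against the $\langle\xi\rangle^s$, $\langle\tau\pm|\xi|\rangle^b$ weights once $b-\half>0$ is taken small; the $b-1+$ on the left and the two $b$'s on the right leave a tiny power to absorb logarithms. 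The $L^2_{x,t}$-duality reformulation turns everything into a trilinear convolution estimate on characteristic-type measures supported near the light cones $\tau=\mp|\xi|$ and $\tau=\mp c|\xi|$.

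The standard tools are: (i) the algebraic ``resonance'' identity, i.e. the fact that on the product of the three hypersurfaces the largest modulation is bounded below by a function of the frequencies and the angle between them, using crucially $c\neq 1$ so that the Klein-Gordon cone $\tau=\pm|\xi|$ and the wave cone $\tau=\pm c|\xi|$ are genuinely transverse; (ii) bilinear $L^2$ (Bourgain-type / $X^{s,b}$ improved Strichartz) estimates for products of two cone-concentrated functions, which gain from transversality; (iii) the angular (dyadic sector) decomposition of Bejenaru-Herr: one cuts the spheres $|\xi_j|\sim N_j$ into caps of angular width comparable to the relevant parameter, estimates the piece coming from a fixed triple of caps, and then sums over caps using almost-orthogonality and a Schur/Cauchy-Schwarz argument. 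The high-high-to-low interaction (two large comparable input frequencies producing a small output) and the region where all three modulations are small will require the most care; there the angular decomposition, combined with the $c<1$ transversality lower bound on the dominant modulation, supplies the decay in $\max(N_1,N_2)$ needed to beat $\langle\xi\rangle^s$ with $s>-\half$.

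Concretely, for \eqref{1.3} I would split into the cases $N_1\lesssim N_2$ (output comparable to $N_2$) and $N_1\gg N_2$, and inside each into subcases according to which of $L_0,L_1,L_2$ is maximal; the Klein-Gordon factor $u$ carries the extra derivative $\langle\xi\rangle$, which is exactly what is available to pay for $A^{-\half}$ in the nonlinearity after transforming back, so the net count is of the same type as in Kinoshita's 2D argument but with the 3D bilinear estimates and 3D angular decomposition of \cite{BH}. For \eqref{1.4}, where two Klein-Gordon waves interact to produce data on the wave cone, the gain again comes from transversality of $\tau=\pm|\xi_1|$ and $\tau=\pm|\xi_2|$ against $\tau=\pm_0 c|\xi|$; the worst region is $|\xi_1|\sim|\xi_2|$ large with nearly parallel (or nearly antiparallel) frequencies and small output frequency, handled by the angular decomposition together with the lower bound $\max_j\langle\tau_j\pm_j|\xi_j|\rangle\gtrsim (1-c)\,\min(N_1,N_2)\cdot(\text{angle})^2$ or $\gtrsim(1-c)\max(N_1,N_2)$ depending on the $\pm$ pattern.

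**Main obstacle.** The hard part is the simultaneous control of all small-modulation interactions: when $L_0,L_1,L_2$ are all comparably small the only remaining mechanism is the geometry of the three hypersurfaces, and one must show that the intersection is so thin that the trilinear integral still gains a power of the top frequency. This is precisely where the Bejenaru-Herr angular decomposition is indispensable, and carrying it out uniformly in the eight sign combinations $(\pm_0,\pm_1,\pm_2)$ — in particular distinguishing the ``$++$'' and ``$+-$'' type resonances whose transversality lower bounds are genuinely different — is the technically delicate core of the proof, together with verifying that the resulting dyadic constants are summable in all six parameters with only the infinitesimal room provided by $s+\half>0$ and $b-\half>0$.
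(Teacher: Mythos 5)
Your overall skeleton agrees with the paper: dualize \eqref{1.3}, \eqref{1.4} to trilinear integrals, decompose dyadically in the frequencies $N_0,N_1,N_2$ and modulations $L_0,L_1,L_2$, treat the sign configurations with an elementary resonance bound plus bilinear $L^2$ estimates (in the paper: Lemmas \ref{Lemma1.1}--\ref{Lemma1.2}, which give $L_{max}\gtrsim\max(|\xi_1|,|\xi_2|)$ using $c<1$, combined with Selberg's bilinear estimate), and reserve the Bejenaru--Herr angular decomposition for the high$\times$high$\to$low interaction with opposite Klein--Gordon signs. The summability discussion ($s+\half>0$, $b-\half>0$ small) is also in line with the paper.

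However, there is a genuine gap in your treatment of exactly the region you identify as the core difficulty. For $\pm_1\neq\pm_2$, $N_1\sim N_2\gg N_0$, the resonance function is $\pm_0 c|\xi|+|\xi_1|-|\xi_2|$ with $\xi=\xi_1+\xi_2$, and since $||\xi_1|-|\xi_2||$ ranges over an interval reaching up to (almost) $|\xi|$ while $c|\xi|<|\xi|$, this expression vanishes on a nontrivial set for every $0<c<1$: there is \emph{no} lower bound of the form $\max_j\langle\tau_j\pm_j|\xi_j|\rangle\gtrsim(1-c)\min(N_1,N_2)\cdot(\mathrm{angle})^2$ in this sign pattern, and all three modulations can simultaneously be $O(1)$. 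So the mechanism you invoke (``the $c<1$ transversality lower bound on the dominant modulation'') cannot close this case. The paper instead exploits \emph{geometric} transversality of the three characteristic hypersurfaces: after scaling, angular decomposition into caps of aperture $\sim A^{-1}$ (and, near the resonant angle $\cos\angle(\xi,\xi_1)=\mp c$, an additional radial decomposition into annuli of thickness $\sim N_0A^{-1/2}$), the normals are nearly constant on each piece, the determinant of the normals is bounded below by $\sim(1-c)A^{-1}$ (see (\ref{2.6})), and the convolution estimate for transverse hypersurfaces of Bejenaru--Herr (\cite{BH}, Theorem 1.3, applied after an affine change of variables via their Proposition 1.2) yields Proposition \ref{Prop.2.1} and Case (II) of Proposition \ref{Prop.2.5}; modulation lower bounds are used only away from the resonant angle (Lemma \ref{Lemma2.2}, giving $L_{max}\gtrsim A^{-3/4}|\xi|$, and Proposition \ref{Prop.2.6} for nearly antiparallel inputs). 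Without this trilinear restriction-type ingredient --- which your proposal does not name, and which cannot be replaced by Schur/Cauchy--Schwarz cap summation alone --- the small-modulation resonant interactions are not controlled, and the claimed range $s>-\half$ is not reached.
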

\noindent{\bf Remarks:} 1. Here and below $a\pm$ denotes $a\pm \epsilon$ for a sufficiently small $\epsilon > 0$ . \\
2. It suffices to consider $ s > -\half$ close to $-\half$ , because the other cases are implied by an application of the fractional Leibniz rule.\\[0.2em]

By duality these estimates reduce to
\begin{align}
\label{1.5}
\left|\int f g_1 g_2 \, dx dt \right| &\lesssim \|f\|_{X^{s,b}_{\pm_0,c}} \|g_1\|_{X^{s+1,b}_{\pm_1}} \|g_2\|_{X^{-s,1-b-}_{\pm_2}} \, , \\
\label{1.6}
\left|\int f g_1 g_2 \, dx dt\right| &\lesssim \|f\|_{X^{-s-1,1-b-}_{\pm_0,c}} \|g_1\|_{X^{s+1,b}_{\pm_1}} \|g_2\|_{X^{s+1,b}_{\pm_2}} \, .
\end{align}
\\[0.2em]
For dyadic numbers $N$ and $L$ we define
\begin{align*}
K_{N,L}^{\pm} &= \{ (\xi,\tau) \in \mathbb{R}^4 : N \le \langle \xi \rangle \le 2N\, , \, L \le \langle \tau \pm |\xi| \rangle \le 2L \}\, , \\
K_{N,L}^{\pm,c} &= \{ (\xi,\tau) \in \mathbb{R}^4 : N \le \langle \xi \rangle \le 2N \, , \, L \le \langle \tau \pm c|\xi| \rangle \le 2L \}
\end{align*}
and
$$ P_{K^{\pm}_{N,L}} = \mathcal{F}_{(x,t)}^{-1} \chi_{K^{\pm}_{N,L}} \mathcal{F}_{(x,t)} \quad , \quad P_{K^{\pm,c}_{N,L}} = \mathcal{F}_{(x,t)}^{-1} \chi_{K^{\pm,c}_{N,L}} \mathcal{F}_{(x,t)} $$
as well as $P_N = \mathcal{F}_{(x,t)}^{-1}  \chi_{\{ N \le \langle \xi \rangle \le 2N\}} \mathcal{F}_{(x,t)}$ .
\\[0.5em] 
{\bf Acknowledgement:} Special thanks go to Shinya Kinoshita, who kindly helped me to recognize a number of details in his paper. I also thank Axel Gr\"unrock for several very helpful contributions.

\section{The case $\pm_1 \neq \pm_2$}

\begin{lemma}
	\label{Lemma1.1}
	Let $\tau=\tau_1+\tau_2$ , $\xi=\xi_1+\xi_2$ and $0 < c < 1$ . Then we have
	$$\max(\langle \tau \pm_0 c |\xi| \rangle , \langle \tau_1 - |\xi_1| \rangle , \langle \tau_2 - |\xi_2|\rangle ) \gtrsim \max(|\xi_1|,\xi_2|) \, . $$
\end{lemma}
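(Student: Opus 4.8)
The plan is to prove the lemma by a direct case analysis based on the resonance function, i.e. by comparing the sizes of the three weights against the algebraic identity obtained from $\tau = \tau_1 + \tau_2$. First I would set $h := (\tau \pm_0 c|\xi|) - (\tau_1 - |\xi_1|) - (\tau_2 - |\xi_2|) = \pm_0 c|\xi| + |\xi_1| + |\xi_2|$, using $\xi = \xi_1 + \xi_2$. Since $|\xi| \le |\xi_1| + |\xi_2|$ and $0 < c < 1$, we get $|h| \ge |\xi_1| + |\xi_2| - c|\xi| \ge (1-c)(|\xi_1| + |\xi_2|) \ge (1-c)\max(|\xi_1|, |\xi_2|)$. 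In particular $|h| \gtrsim \max(|\xi_1|, |\xi_2|)$, where the implicit constant depends only on $c$.

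Next I would invoke the elementary bound that the maximum of the three weights controls $\langle h \rangle$: from the triangle inequality, $|h| \le |\tau \pm_0 c|\xi|| + |\tau_1 - |\xi_1|| + |\tau_2 - |\xi_2||$, hence $\langle h \rangle \lesssim \max(\langle \tau \pm_0 c|\xi|\rangle, \langle \tau_1 - |\xi_1|\rangle, \langle \tau_2 - |\xi_2|\rangle)$. Combining this with the lower bound $\langle h \rangle \ge |h| \gtrsim \max(|\xi_1|, |\xi_2|)$ from the previous step yields the claim, once one also notes the trivial estimate $\max(\langle\cdot\rangle,\langle\cdot\rangle,\langle\cdot\rangle) \ge 1 \gtrsim \langle \max(|\xi_1|,|\xi_2|)\rangle / \max(|\xi_1|,|\xi_2|)$ is not even needed since $\max(|\xi_1|,|\xi_2|) \le \langle\xi_1\rangle$ type bounds or simply because we only claim $\gtrsim \max(|\xi_1|,|\xi_2|)$ and not $\gtrsim \langle \max(\ldots)\rangle$.

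The only subtlety — and it is a mild one — is that the sign $\pm_0$ in front of $c|\xi|$ is arbitrary, but this causes no trouble: in either case $\pm_0 c|\xi|$ has absolute value $c|\xi| < |\xi| \le |\xi_1| + |\xi_2|$, so the cancellation against $|\xi_1| + |\xi_2|$ is strictly incomplete, with a deficit controlled from below by $(1-c)(|\xi_1|+|\xi_2|)$. I expect the main (very modest) obstacle to be simply bookkeeping the constant's dependence on $c$ and making sure the passage from $|h|$ to $\langle h\rangle$ and back is written cleanly; there is no genuine geometric or harmonic-analytic difficulty here, as the lemma is essentially the statement that the two wave propagators with speeds $1$ and $c$ (with $c<1$) cannot both resonate with a product unless a high-frequency weight is large.
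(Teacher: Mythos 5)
Your proof is correct and follows essentially the same route as the paper: both exploit the resonance identity $\tau\pm_0 c|\xi|-(\tau_1-|\xi_1|)-(\tau_2-|\xi_2|)=\pm_0 c|\xi|+|\xi_1|+|\xi_2|$ and bound its absolute value below by $(1-c)(|\xi_1|+|\xi_2|)$ using $|\xi|\le|\xi_1|+|\xi_2|$ and $c<1$, then dominate it by the largest of the three weights. Your explicit triangle-inequality step (with the harmless factor $3$) is just a slightly more careful writing of the paper's first inequality; there is no substantive difference.
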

\begin{proof} We easily obtain
	 \begin{align*}
	 &\max(\langle \tau \pm_0 c |\xi| \rangle , \langle \tau_1 - |\xi_1| \rangle , \langle \tau_2 - |\xi_2|\rangle ) \\
	 & \ge |\tau \pm_0 c |\xi|-(\tau_1-|\xi_1|) - (\tau_2-|\xi_2|)| \\
	 & \ge ||\xi_1|+|\xi_2| - c |\xi|| \ge |\xi_1|+|\xi_2| -c(|\xi_1|+|\xi_2|) \\
	 & = (1-c)(|\xi_1| + |\xi_2|) \, .
	 \end{align*}
	\end{proof}
A similar result also holds in the case $\pm_1 \neq \pm_2$ , provided $|\xi_1| \nsim |\xi_2|$ .
\begin{lemma}
	\label{Lemma1.2}
	Let $\tau=\tau_1+\tau_2$ , $\xi=\xi_1+\xi_2$ , $|\xi_2| \le \frac{1-c}{2(1+c)} |\xi_1|$ or  $|\xi_1| \le \frac{1-c}{2(1+c)} |\xi_2|$ , and $0 < c < 1$ . Then the following estimate applies:
	$$\max(\langle \tau \pm_0 c |\xi| \rangle , \langle \tau_1 - |\xi_1| \rangle , \langle \tau_2 + |\xi_2|\rangle ) \ge \frac{1-c}{2} \max(|\xi_1|,\xi_2|) \, . $$
\end{lemma}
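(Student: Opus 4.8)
My plan is to follow the proof of Lemma~\ref{Lemma1.1}. I would first observe that the coefficients $(+1,-1,-1)$ in front of the three phases $\tau\pm_0 c|\xi|$, $\tau_1-|\xi_1|$, $\tau_2+|\xi_2|$ are arranged so that the $\tau$-dependence cancels: since $\tau=\tau_1+\tau_2$ one has $(\tau\pm_0 c|\xi|)-(\tau_1-|\xi_1|)-(\tau_2+|\xi_2|)=|\xi_1|-|\xi_2|\pm_0 c|\xi|$. Hence, using $\langle x\rangle\ge|x|$ and the triangle inequality,
$$\max\big(\langle\tau\pm_0 c|\xi|\rangle,\langle\tau_1-|\xi_1|\rangle,\langle\tau_2+|\xi_2|\rangle\big)\ \gtrsim\ \big||\xi_1|-|\xi_2|\pm_0 c|\xi|\big|,$$
so the whole statement reduces to the elementary lower bound $\big||\xi_1|-|\xi_2|\pm_0 c|\xi|\big|\gtrsim(1-c)\max(|\xi_1|,|\xi_2|)$ under the frequency-separation hypothesis, the precise constant $\tfrac{1-c}{2}$ then being pure bookkeeping exactly as in Lemma~\ref{Lemma1.1}. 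The other sign configuration $\langle\tau_1+|\xi_1|\rangle,\langle\tau_2-|\xi_2|\rangle$ follows from the stated one by relabelling $(\xi_1,\tau_1)\leftrightarrow(\xi_2,\tau_2)$, under which the hypothesis is invariant, so it suffices to treat the stated one.

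For the elementary bound I would argue by cases, using only $|\xi|\le|\xi_1|+|\xi_2|$. Suppose first $|\xi_2|\le\frac{1-c}{2(1+c)}|\xi_1|$, so $\max(|\xi_1|,|\xi_2|)=|\xi_1|$. If $\pm_0=+$ then $|\xi_1|-|\xi_2|\pm_0 c|\xi|\ge|\xi_1|-|\xi_2|\ge\big(1-\tfrac{1-c}{2(1+c)}\big)|\xi_1|=\tfrac{1+3c}{2(1+c)}|\xi_1|\ge\tfrac12|\xi_1|\ge 0$, so the modulus may be dropped. If $\pm_0=-$ then
$$|\xi_1|-|\xi_2|-c|\xi|\ \ge\ (1-c)|\xi_1|-(1+c)|\xi_2|\ \ge\ (1-c)|\xi_1|-\tfrac{1-c}{2}|\xi_1|\ =\ \tfrac{1-c}{2}|\xi_1|\ \ge\ 0,$$
again nonnegative. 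The other branch $|\xi_1|\le\frac{1-c}{2(1+c)}|\xi_2|$ (so $\max=|\xi_2|$) is symmetric: for $\pm_0=-$ the quantity $|\xi_1|-|\xi_2|-c|\xi|$ is negative with $\big||\xi_1|-|\xi_2|-c|\xi|\big|=|\xi_2|-|\xi_1|+c|\xi|\ge|\xi_2|-|\xi_1|\ge\tfrac12|\xi_2|$, while for $\pm_0=+$ one checks $|\xi_1|+c|\xi|\le(1+c)|\xi_1|+c|\xi_2|\le\tfrac{1+c}{2}|\xi_2|<|\xi_2|$, so $|\xi_1|-|\xi_2|+c|\xi|$ is negative and its modulus is $\ge(1-c)|\xi_2|-(1+c)|\xi_1|\ge\tfrac{1-c}{2}|\xi_2|$.

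I do not expect a genuine analytic obstacle here; the lemma is the geometric fact that two Klein--Gordon cones carrying opposite signs can resonate (all three weights simultaneously small) only when their frequencies are comparable, since otherwise the ion-sound phase $\tau\pm_0 c|\xi|$ with $c<1$ is necessarily large. The only point that needs care is the case bookkeeping above --- the four combinations of (which of $\xi_1,\xi_2$ is the larger frequency) $\times$ (the sign $\pm_0$) --- and, in each of them, verifying that the expression under the absolute value has a definite sign so that it can be estimated directly; once the favourable cases (those yielding the cleaner bound $\tfrac12\max$) are separated off, each remaining case is a one-line computation using the hypothesis and $|\xi|\le|\xi_1|+|\xi_2|$.
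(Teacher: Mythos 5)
Your proposal is correct and follows essentially the same route as the paper: since $\tau=\tau_1+\tau_2$, the three weights control $\bigl|\,|\xi_1|-|\xi_2|\pm_0 c|\xi|\,\bigr|$, which is then bounded below by $\tfrac{1-c}{2}\max(|\xi_1|,|\xi_2|)$ using the frequency-separation hypothesis and $|\xi|\le|\xi_1|+|\xi_2|$ — the paper compresses your four cases by estimating $\bigl||\xi_1|-|\xi_2|\bigr|-c(|\xi_1|+|\xi_2|)$ for both signs $\pm_0$ at once and treating the symmetric case "similarly". The only caveat (shared with the paper itself) is that passing from the maximum of the three weights to the absolute value of their signed combination costs a harmless factor $\tfrac13$, so the stated constant should really be read up to an absolute constant, which is all that is used later via \eqref{L_max}.
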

\begin{proof}
	In the case   $|\xi_2| \le \frac{1-c}{2(1+c)} |\xi_1|$ we obtain
	\begin{align*}  
	&\max(\langle \tau \pm_0 c |\xi| \rangle , \langle \tau_1 - |\xi_1| \rangle , \langle \tau_2 + |\xi_2|\rangle ) \ge |\pm_0 c|\xi|+|\xi_1|-|\xi_2|| \\
	& \ge ||\xi_1|-|\xi_2|| - c(|\xi_1|+|\xi_2|) \ge (1-\frac{1-c}{2(1+c)} - c - \frac{c(1-c)}{2(1+c)}) |\xi_1| = \frac{1-c}{2} |\xi_1| \, .
	\end{align*}
	The other case is treated similarly.
	\end{proof}

Thus, if either {\bf a.}  $\pm_1 = \pm_2$ or  else {\bf b.} $\pm_1 \neq \pm_2$ and $|\xi_1| \ll |\xi_2|$ or $|\xi_2| \gg |\xi_1|$ in the sense of Lemma \ref{Lemma1.2} the estimate
\begin{equation}
\label{L_max}
L_{max} := \max(L_0,L_1,L_2) \gtrsim \max(|\xi_1|,|\xi_2|)
\end{equation}
 is true, where
$L_0 \sim \langle \tau \pm_0 c|\xi| \rangle$ , $L_1 \sim \langle \tau_1 \pm_1 |\xi_1| \rangle$ , $L_2 \sim \langle \tau_2 \pm_2 |\xi_2| \rangle$ .\\[0.2em]

In this chapter we prove the desired bilinear estimates provided (\ref{L_max}) is true.

We first review a bilinear estimate which applies regardless of signs or $c$ .
\begin{prop}
	\label{Prop.1.1}
	For any choice of signs $\pm_0,\pm_1,\pm_2$ , $ c > 0$ and $0 \le \epsilon \le \half$ the following estimate applies:
	\begin{align*}
	&\| P_{K^{\pm_0}_{N_0,L_0}} ((P_{K^{\pm_1,c}_{N_1,L_1}} f)(P_{K^{\pm_2}_{N_2,L_2}} g))\|_{L^2_{xt}} 
	\lesssim  \min(N_0,N_1,N_2)^{\half + 2\epsilon} \min(L_1,L_2)^{\half} \\ & \quad \quad \quad \quad\cdot \min(N_1,N_2)^{\half-\epsilon} \max(L_1,L_2)^{\half-\epsilon} \|P_{K^{\pm_1,c}_{N_1,L_1}} f\|_{L^2_{xt}} \|P_{K^{\pm_2}_{N_2,L_2}} g\|_{L^2_{xt}} \, .
	\end{align*}
\end{prop}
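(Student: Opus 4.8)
The plan is to reduce this to the classical $L^4$-type bilinear estimates (Strichartz/transference) for the wave equation by a standard Cauchy--Schwarz and almost-orthogonality argument, following the Bourgain--Tao philosophy. After applying the (inverse) spatial Fourier transform on the output and using Plancherel, the left-hand side is an $L^2$ norm over the region where $\langle\xi\rangle\sim N_0$, $\langle\tau\pm_0 c|\xi|\rangle\sim L_0$, and we write it as a convolution in $(\xi,\tau)$ of the two truncated pieces. The first move is the trivial bound controlling the $L^2$ norm of a convolution of two $L^2$ functions by the $L^2\times L^1$ (or $L^1\times L^2$) H\"older inequality, then estimating the $L^1$ factor by Cauchy--Schwarz against the measure of the support of the inner piece. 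The support of $P_{K^{\pm_i}_{N_i,L_i}}$ in $(\xi,\tau)$, for fixed $\xi$, is an interval of length $\sim L_i$ in $\tau$, and $\xi$ ranges over a ball of radius $\sim N_i$; hence the relevant measures are $\lesssim N_i^3 L_i$, and one gains the factor $\min(L_1,L_2)^{1/2}$ and a power of $\min(N_1,N_2)$.

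The refinement that produces the sharp powers $\min(N_0,N_1,N_2)^{1/2+2\epsilon}$ together with $\min(N_1,N_2)^{1/2-\epsilon}\max(L_1,L_2)^{1/2-\epsilon}$ comes from interpolating the crude volume bound with the bilinear $L^2$ (convolution) estimate for two characteristic hypersurfaces of the wave cone. Concretely, for $\epsilon=1/2$ one uses that the convolution of the two characteristic surface measures restricted to the indicated frequency annuli is controlled by $\min(N_0,N_1,N_2)^{1/2}\min(N_1,N_2)\min(L_1,L_2)^{1/2}$-type quantities — this is where the transversality of the two cones $\{\tau_1\pm_1|\xi_1|=0\}$ and $\{\tau_2\pm_2|\xi_2|=0\}$ and the resulting control on the angular/radial localization enters, and it is essentially the 3D version of the Klainerman--Machedon / Foschi--Klainerman bilinear estimate. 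For $\epsilon=0$ one falls back on the pure volume bound with $\max(L_1,L_2)^{1/2}$. Real interpolation in $\epsilon$ between these two endpoints yields the stated inequality for all $0\le\epsilon\le 1/2$. Note that the $c$-modified surface $\langle\tau\pm_0 c|\xi|\rangle$ plays no role here beyond shifting $\tau$, so the estimate is indeed insensitive to $c$ and to the choice of signs, consistent with the statement.

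The main technical obstacle is the sharp bilinear convolution bound at the endpoint $\epsilon=1/2$, i.e.\ showing
$$
\big\| \chi_{K^{\pm_1,c}_{N_1,L_1}} * \chi_{K^{\pm_2}_{N_2,L_2}} \big\|_{L^\infty_{\xi\tau}} \lesssim \min(N_0,N_1,N_2)^{3/2}\min(L_1,L_2),
$$
uniformly in the location of the output block — one must decompose according to which of $N_0,N_1,N_2$ is smallest and, in the high-high-to-low case $N_0\ll N_1\sim N_2$, exploit that fixing $(\xi,\tau)$ and the two quadratic constraints confines $\xi_1$ to a set of measure $\lesssim N_0^2 L_{\min}$ rather than $N_0^3$. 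This is exactly the place where the angular decomposition of Bejenaru--Herr is, in spirit, doing its work, though for this crude-looking proposition one can get away with the direct geometric computation. Everything else — the H\"older reductions, Plancherel, the bookkeeping of dyadic parameters, and the interpolation — is routine.
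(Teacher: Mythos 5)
Your overall skeleton --- two endpoint estimates plus interpolation in $\epsilon$ --- is the same as the paper's, which combines Selberg's anisotropic bilinear estimate (the endpoint $\epsilon=0$) with a Sobolev/Bernstein-type bound (the endpoint $\epsilon=\half$) and bilinear interpolation. But you have the two endpoints essentially the wrong way round, and your key technical claim is false. At $\epsilon=0$ the constant is $\min(N_0,N_1,N_2)^{\half}\min(N_1,N_2)^{\half}(L_1L_2)^{\half}$, and this is \emph{not} a ``pure volume bound'': in the regime $N_0\ll N_1\sim N_2$, support counting plus Cauchy--Schwarz only confines $\xi_1$ to a $\max(L_1,L_2)$-neighbourhood of a level set of $\pm_1 c|\xi_1|\pm_2|\xi-\xi_1|$ inside the annulus, a set of measure of order $\min(N_1,N_2)^2\max(L_1,L_2)$ with no gain from $N_0$, so one only gets $\min(N_1,N_2)(L_1L_2)^{\half}$, which is worse by $(N_1/N_0)^{\half}$. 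Recovering the factor $\min(N_0,N_1,N_2)^{\half}$ there is precisely the content of the nontrivial anisotropic estimate (Selberg's Theorem 1.1, cited in the paper), which your proposal neither proves nor invokes; it is this endpoint, not $\epsilon=\half$, where Klainerman--Machedon-type bilinear technology lives.

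Second, your route to the $\epsilon=\half$ endpoint does not work. The claimed bound $\|\chi_{K^{\pm_1,c}_{N_1,L_1}}\ast\chi_{K^{\pm_2}_{N_2,L_2}}\|_{L^\infty_{\xi\tau}}\lesssim\min(N_0,N_1,N_2)^{3/2}\min(L_1,L_2)$ is dimensionally a $4$-volume and would have to read $\min(N_0,N_1,N_2)^{3}\min(L_1,L_2)$ to yield the stated constant after taking a square root; but even that is false: take $N_1=N_2=N$, $L_1=L_2\sim N$, $\langle\xi\rangle\sim N_0\sim 1$ and opposite signs, and the convolution of the two characteristic functions at suitably chosen $(\xi,\tau)$ has size $\sim N^4$, not $\lesssim N$. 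Likewise your assertion that fixing $(\xi,\tau)$ the two modulation constraints confine $\xi_1$ to measure $\lesssim N_0^2\min(L_1,L_2)$ is wrong: eliminating $\tau_1$ leaves a single constraint of width $\max(L_1,L_2)$, giving measure of order $N_1^2\max(L_1,L_2)$. In fact no pointwise bound on the convolution of the supports can produce the $\epsilon=\half$ endpoint in the high-high-to-low case, since in the example above the sharp constant is $N_0^{3/2}\min(L_1,L_2)^{\half}\sim N^{\half}$ while $\sup|E|^{\half}\sim N^{2}$. The correct (and standard) argument, which is what the paper's ``Sobolev type estimate'' encodes, is a mixed-norm one: Bernstein in $x$ on the lowest-frequency factor or output, $\|P_{N_0}h\|_{L^2_x}\lesssim N_0^{3/2}\|h\|_{L^1_x}$, combined with $\|P_{K^{\pm,c}_{N_1,L_1}}f\|_{L^\infty_tL^2_x}\lesssim L_1^{\half}\|P_{K^{\pm,c}_{N_1,L_1}}f\|_{L^2_{xt}}$ applied to whichever factor has the smaller modulation. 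With those two endpoints established, your interpolation step and the observation that $c$ and the signs play no role are fine, but as written the proposal has a genuine gap at both endpoints.
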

\begin{proof}
	We use \cite{S}, Theorem 1.1, which proves the estimate in the case $\epsilon =0$ and remark that the proof makes no use of a specific value of $c$. Moreover we apply the Sobolev type estimate
	$$\|P_{N_0}((P_{K^c_{N_1,L_1}} f)(P_{N_2} g))\|_{L^2_{xt}} \lesssim \min(N_0,N_1,N_2)^{\frac{3}{2}} L_1^{\half} \|P_{K^c_{N_1,L_1}} f\|_{L^2_{xt}} \|P_{N_2} g\|_{L^2_{xt}} \, .$$
	By bilinear interpolation the claimed estimate follows.
	\end{proof}
\begin{prop}
	Assume $L_{max} \gtrsim \max(N_1,N_2)$ , $0<c$ , $s > -\half$ . Define 
	$$I_1 = N_1^{-1} \int (P_{K^{\pm,c}_{N_0,L_0}} f)(P_{K^{\pm_1}_{N_1,L_1}} g_1)(P_{K^{\pm_2}_{K_{N_2,L_2}}} g_2) \, dx dt \, . $$
	Then there exists $b >\half$ such that following estimate applies:
	$$ \sum_{1 \le N_0,N_1,N_2} \sum_{L_0,L_1,L_2} I_1 \lesssim \|f\|_{X^{s,b}_{\pm,c}} \|g_1\|_{X^{s,b}_{\pm_1}} \|g_2\|_{X^{-s,1-b-}_{\pm_2}} \, . $$
\end{prop}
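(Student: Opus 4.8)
The plan is to dyadically decompose all three factors in frequency (both spatial frequency $N_i$ and modulation $L_i$), reduce the claimed estimate to a single multilinear bound on $I_1$ for fixed dyadic parameters, and then sum up using the decay furnished by the weights $\langle \xi\rangle^s$, $\langle\tau\pm|\xi|\rangle^b$ and the extra hypothesis $L_{max}\gtrsim\max(N_1,N_2)$. First I would bound $|I_1|$ by $N_1^{-1}$ times the $L^2_{xt}$-norms of the three pieces against the $L^\infty$-type bilinear estimate of Proposition \ref{Prop.1.1} (applied to whichever two of the three functions is convenient, using that $\int fg_1g_2 = \int (P_{K}(g_1g_2))\,\overline{\bar f}$ type pairing, i.e. Plancherel), obtaining for fixed dyadic blocks
\[
I_1 \lesssim N_1^{-1}\min(N_0,N_1,N_2)^{\frac12}\min(N_1,N_2)^{\frac12}\min(L_1,L_2,L_0)^{\frac12}\,\text{(a second modulation factor)}\; \prod \|P\cdot\|_{L^2},
\]
where the precise placement of the modulation gains depends on which of $L_0,L_1,L_2$ realises $L_{max}$. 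Here one should be a little careful and split into the subcases $L_{max}=L_0$, $L_{max}=L_1$, $L_{max}=L_2$, because in Proposition \ref{Prop.1.1} the roles of the "elliptic" factor $f$ and the two "wave" factors are not symmetric; in each subcase one applies the proposition with the two lowest-modulation functions playing the role of $f$ and $g$.

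Next I would convert the $L^2$-norms of the localized pieces into the $X^{s,b}$-norms: $\|P_{K^{\pm,c}_{N_0,L_0}}f\|_{L^2}\le N_0^{-s}L_0^{-b}\|f\|_{X^{s,b}_{\pm,c}}$ (restricted to that block), and similarly $\|P_{K^{\pm_1}_{N_1,L_1}}g_1\|_{L^2}\le N_1^{-(s)}L_1^{-b}\|g_1\|_{X^{s,b}_{\pm_1}}$ (here the $g_1$ weight is $\langle\xi_1\rangle^s$, matching the statement) and $\|P_{K^{\pm_2}_{N_2,L_2}}g_2\|_{L^2}\le N_2^{s}L_2^{-(1-b-)}\|g_2\|_{X^{-s,1-b-}_{\pm_2}}$. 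After Cauchy–Schwarz in the dyadic sums (which is legitimate because the $X^{s,b}$-norm is an $\ell^2$ sum over the blocks), it suffices to show that the resulting scalar multiplier — a product of powers of $N_0,N_1,N_2,L_0,L_1,L_2$ — is summable, i.e. bounded by $C\,(N_1/N_1^{max})^{0+}\cdots$ with geometric decay in each of the six dyadic indices. The convolution constraint $\xi=\xi_1+\xi_2$ forces $N_0\lesssim\max(N_1,N_2)$ and, when $N_1\sim N_2\gg N_0$ is excluded, the standard comparabilities; in the genuinely hard region $N_1\sim N_2\sim N\gg N_0$ one uses exactly $L_{max}\gtrsim N$ to trade one power of $N$ for $L_{max}^{-(b-\frac12)}$ or $L_{max}^{-(\frac12-b-)}$ as appropriate. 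The arithmetic to check is that, writing $b=\frac12+\delta$ with $\delta>0$ small and $s=-\frac12+\sigma$ with $\sigma>0$ small, the total $N$-power coming out of $-1+\frac12+\frac12+(-s)+s+(\text{gain from }L_{max})$ is strictly negative; this is where the hypotheses $s>-\frac12$ and $b>\frac12$ are both consumed, and it is the one genuinely delicate computation.

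The main obstacle I expect is the high-high-to-low interaction $N_1\sim N_2\gg N_0$ together with the endpoint nature of the $L^2$-Strichartz/bilinear input: one only gains $\min(N_1,N_2)^{\frac12}$ from Proposition \ref{Prop.1.1} rather than the full $\min(N_1,N_2)$, so the prefactor $N_1^{-1}$ must cooperate with the modulation gain $L_{max}^{-(b-\frac12)}$ and the hypothesis $L_{max}\gtrsim N$ to close the sum with room to spare for the $\epsilon$-losses hidden in $1-b-$. Concretely, the worst case is when $L_{max}=L_0$ (the "elliptic" modulation is large): then Proposition \ref{Prop.1.1} must be applied with $g_1,g_2$ as the two $L^2$-functions, and one needs that $N_1^{-1}\cdot N^{\frac12+2\epsilon}\cdot N^{\frac12-\epsilon}\cdot L_0^{-(\text{what we can extract})}$ still beats the remaining weights; choosing $\epsilon$ small and $b$ close to (but above) $\frac12$ makes this work precisely because $s>-\frac12$. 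If that case turned out to be too tight, the fallback is to use the Sobolev-type estimate in Proposition \ref{Prop.1.1} (the $N^{3/2}$ version) in the orthogonal modulation regime and interpolate; but I anticipate the first strategy suffices. Once the fixed-block estimate is summable, reassembling via Cauchy–Schwarz in $\ell^2$ over $(N_0,N_1,N_2,L_0,L_1,L_2)$ gives the stated bound, completing the proof.
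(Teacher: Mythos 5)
Your proposal is correct and follows essentially the same route as the paper: extract the highest-modulation factor by Cauchy--Schwarz, apply the interpolated bilinear estimate of Proposition \ref{Prop.1.1} to the remaining two factors, trade $\max(N_1,N_2)\lesssim L_{max}$ for modulation decay with $\epsilon=b-\half+$ and $b$ just above $\half$, and close the dyadic sums using $s>-\half$. The only small correction is that the tightest subcase is not $L_{max}=L_0$ but $L_{max}=L_2$, where the gain is only $L_2^{-(1-b)+}\sim N^{-(1-b)+}$ and one is genuinely forced to take $b=\half+$; your case framework covers this anyway.
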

	\begin{proof}
{\bf Case 1:} $ 1 \le N_0 \lesssim N_1 \sim N_2$ . \\
Case 1.1: $N_1 \lesssim L_0$ . We use the notation $f= P_{K^{\pm,c}_{N_0,L_0}} f$ , $g_j = P_{K^{\pm_j}_{N_j,L_j}} g_j$ . By Prop. \ref{Prop.1.1} we obtain
\begin{align*}
&N_1^{-1} |\int f g_1 g_2 \, dx dt| \le N_1^{-1} \|f\|_{L^2} \|P_{K_{N_0,L_0}} (g_1 g_2)\|_{L^2} \\
& \lesssim N_1^{-1} N_0^{\half + 2\epsilon} N_1^{\half-\epsilon} L_1^{\half} L_2^{\half-\epsilon} N_1^{-b+} \|f\|_{X^{s,b}_{\pm,c}} \|g_1\|_{X^{s,b}_{\pm_1}} \|g_2\|_{X^{-s,1-b-}_{\pm_2}} \\
& \hspace{10em} \cdot L_0^{0-} N_0^{-s+} N_1^{-s} N_2^s L_1^{-b} L_2^{b-1+} \\
& \lesssim N_0^{\half-s+2\epsilon+} N_1^{-\half-b-\epsilon+} L_1^{\half-b} L_2^{-\half+b-\epsilon+} L_0^{0-}\|f\|_{X^{s,b}_{\pm,c}} \|g_1\|_{X^{s,b}_{\pm_1}} \|g_2\|_{X^{-s,1-b-}_{\pm_2}} \\
& \lesssim N_1^{-s-\half+} (L_0 L_1 L_2)^{0-} \|f\|_{X^{s,b}_{\pm,c}} \|g_1\|_{X^{s,b}_{\pm_1}} \|g_2\|_{X^{-s,1-b-}_{\pm_2}} \\
& \lesssim N_1^{0-} (L_0 L_1 L_2)^{0-}\|f\|_{X^{s,b}_{\pm,c}} \|g_1\|_{X^{s,b}_{\pm_1}} \|g_2\|_{X^{-s,1-b-}_{\pm_2}} \, ,
\end{align*}
if $\epsilon = b-\half+$ and $s > -\half$ .\\
Case 1.2: $N_1 \lesssim L_1$ . By Prop. \ref{Prop.1.1} we obtain
\begin{align*}
&N_1^{-1} |\int f g_1 g_2 \, dx dt| \le N_1^{-1} \|g_1\|_{L^2} \|P_{K_{N_1,L_1}} (f g_2)\|_{L^2} \\
& \lesssim N_1^{-1} N_0^{\half + 2\epsilon} N_0^{\half-\epsilon} L_0^{\half}  L_2^{\half-\epsilon} N_0^{-s}L_0^{-b} N_1^{-s} N_1^{-b+} L_1^{0-} N_2^s L_2^{b-1+} \\
& \hspace{10em} \cdot \|f\|_{X^{s,b}_{\pm,c}} \|g_1\|_{X^{s,b}_{\pm_1}} \|g_2\|_{X^{-s,1-b-}_{\pm_2}} \\
& \lesssim N_0^{1-s+\epsilon+} N_1^{-1-b+} L_0^{\half-b} L_2^{-\half+b-\epsilon+} L_1^{0-}\|f\|_{X^{s,b}_{\pm,c}} \|g_1\|_{X^{s,b}_{\pm_1}} \|g_2\|_{X^{-s,1-b-}_{\pm_2}} \\
& \lesssim N_1^{0-} (L_0 L_1 L_2)^{0-} \|f\|_{X^{s,b}_{\pm,c}} \|g_1\|_{X^{s,b}_{\pm_1}} \|g_2\|_{X^{-s,1-b-}_{\pm_2}} \, ,
\end{align*}
if $\epsilon = b-\half+$ and $s > -\half$ .	\\	
Case 1.3: $N_1 \lesssim L_2$ . Similarly we obtain
\begin{align*}
&N_1^{-1} |\int f g_1 g_2 \, dx dt| \le N_1^{-1} \|g_2\|_{L^2} \|P_{K_{N_2,L_2}} (f g_1)\|_{L^2} \\
& \lesssim N_1^{-1} N_0^{\half + 2\epsilon} N_0^{\half-\epsilon} L_0^{\half}  L_1^{\half-\epsilon} N_0^{-s}L_0^{-b} N_1^{-s}L_1^{-b}  N_2^s N_1^{b-1+} L_2^{0-} \\
& \hspace{10em} \cdot \|f\|_{X^{s,b}_{\pm,c}} \|g_1\|_{X^{s,b}_{\pm_1}} \|g_2\|_{X^{-s,1-b-}_{\pm_2}} \\
& \lesssim N_0^{1-s+\epsilon} N_1^{-2+b+} L_0^{\half-b} L_1^{\half-\epsilon-b} L_2^{0-}\|f\|_{X^{s,b}_{\pm,c}} \|g_1\|_{X^{s,b}_{\pm_1}} \|g_2\|_{X^{-s,1-b-}_{\pm_2}} \\
& \lesssim N_1^{-1+b-s+\epsilon+} (L_0 L_1 L_2)^{0-} \|f\|_{X^{s,b}_{\pm,c}} \|g_1\|_{X^{s,b}_{\pm_1}} \|g_2\|_{X^{-s,1-b-}_{\pm_2}} \, .
\end{align*}
If $ b=\half+$ , $\epsilon = b-\half+$ and $s > -\half$ . we obtain the desired bound.	\\	
{\bf Case 2:} $1 \le N_1 \lesssim N_0 \sim N_2$ .	\\
Because it can be treated similarly as Case 1 we omit it. \\
{\bf Case 3:} $1 \le N_2 \lesssim N_0 \sim N_1$ .	\\	
Case 3.1: $N_0 \lesssim L_0$ . By Prop. \ref{Prop.2.1} we obtain		
\begin{align*}
&N_1^{-1} |\int f g_1 g_2 \, dx dt| \le N_1^{-1} \|f\|_{L^2} \|P_{K_{N_0,L_0}} (g_1 g_2)\|_{L^2} \\
& \lesssim N_1^{-1} N_2^{\half + 2\epsilon} N_2^{\half-\epsilon} L_1^{\half} L_2^{\half-\epsilon} N_0^{-s}  N_0^{-b+} L_0^{0-} N_1^{-s} L_1^{-b} N_2^s L_2^{b-1+} \\
& \hspace{10em} \|f\|_{X^{s,b}_{\pm,c}} \|g_1\|_{X^{s,b}_{\pm_1}} \|g_2\|_{X^{-s,1-b-}_{\pm_2}}  \\
& \lesssim N_2^{1+s+\epsilon} N_1^{-1-2s-b+} L_1^{\half-b} L_2^{-\half+b-\epsilon+} L_0^{0-}\|f\|_{X^{s,b}_{\pm,c}} \|g_1\|_{X^{s,b}_{\pm_1}} \|g_2\|_{X^{-s,1-b-}_{\pm_2}} \\
& \lesssim N_1^{-s-\half+} (L_0 L_1 L_2)^{0-} \|f\|_{X^{s,b}_{\pm,c}} \|g_1\|_{X^{s,b}_{\pm_1}} \|g_2\|_{X^{-s,1-b-}_{\pm_2}} \
\end{align*}
for  $\epsilon = b-\half+$ . This implies the claimed estimate if $s > -\half$ . .\\		
Case 3.2: $N_0 \lesssim L_1$ . By Prop. \ref{Prop.2.1} we obtain		
\begin{align*}
&N_1^{-1} |\int f g_1 g_2 \, dx dt| \le N_1^{-1} \|g_1\|_{L^2} \|P_{K_{N_1,L_1}} (f g_2)\|_{L^2} \\
& \lesssim N_1^{-1} N_2^{\half + 2\epsilon} N_2^{\half-\epsilon} L_0^{\half} L_2^{\half-\epsilon} N_0^{-s}  N_0^{-b+} L_0^{-b} N_1^{-s} L_1^{0-} N_2^s L_2^{b-1+} \\
& \hspace{10em} \|f\|_{X^{s,b}_{\pm,c}} \|g_1\|_{X^{s,b}_{\pm_1}} \|g_2\|_{X^{-s,1-b-}_{\pm_2}}  \\
& \lesssim N_2^{1+s+\epsilon} N_1^{-1-2s-b+} L_0^{\half-b} L_1^{0-} L_2^{b-\half-\epsilon+}\|f\|_{X^{s,b}_{\pm,c}} \|g_1\|_{X^{s,b}_{\pm_1}} \|g_2\|_{X^{-s,1-b-}_{\pm_2}} \\
& \lesssim N_1^{-s-\half+} (L_0 L_1 L_2)^{0-} \|f\|_{X^{s,b}_{\pm,c}} \|g_1\|_{X^{s,b}_{\pm_1}} \|g_2\|_{X^{-s,1-b-}_{\pm_2}} \
\end{align*}
for  $\epsilon = b-\half+$ . This implies the claimed estimate if $s > -\half$ . \\		
Case 3.3: $N_0 \lesssim L_2$ . By Prop. \ref{Prop.2.1} we obtain		
\begin{align*}
&N_1^{-1} |\int f g_1 g_2 \, dx dt| \le N_1^{-1} \|g_2\|_{L^2} \|P_{K_{N_2,L_2}} (f g_1)\|_{L^2} \\
& \lesssim N_1^{-1} N_2^{\half + 2\epsilon} N_1^{\half-\epsilon} L_0^{\half} L_1^{\half-\epsilon} N_0^{-s}  N_0^{-b+} L_0^{-b} N_1^{-s} L_1^{-b} N_2^sN_0^{b-1+} L_2^{0-} \\
& \hspace{10em}\|f\|_{X^{s,b}_{\pm,c}} \|g_1\|_{X^{s,b}_{\pm_1}} \|g_2\|_{X^{-s,1-b-}_{\pm_2}}  \\
& \lesssim N_2^{\half+s+2\epsilon} N_1^{-\frac{3}{2}-2s-b-\epsilon+} L_0^{\half-b} L_1^{\half-b-\epsilon} L_2^{0-}\|f\|_{X^{s,b}_{\pm,c}} \|g_1\|_{X^{s,b}_{\pm_1}} \|g_2\|_{X^{-s,1-b-}_{\pm_2}} \\
& \lesssim N_1^{-s-b-1+\epsilon+} (L_0 L_1 L_2)^{0-} \|f\|_{X^{s,b}_{\pm,c}} \|g_1\|_{X^{s,b}_{\pm_1}} \|g_2\|_{X^{-s,1-b-}_{\pm_2}} \
\end{align*}
 This implies the claimed estimate if  $\epsilon = b-\half+$ and $s > -\half$  .\\	
 By dyadic summation the result is implied.	
 \end{proof}

 In a similar manner the following result is proved.
\begin{prop}
	Assume $L_{max} \gtrsim \max(N_1,N_2)$ , $0<c$ , $s > -\half$ . Define 
	$$I_2 =N_0 N_1^{-1} N_2^{-1} \int (P_{K^{\pm,c}_{N_0,L_0}} f)(P_{K^{\pm_1}_{N_1,L_1}} g_1)(P_{K^{\pm_2}_{K_{N_2,L_2}}} g_2) \, dx dt \, . $$
	Then there exists $b >\half$ such that following estimate applies:
	$$ \sum_{1 \le N_0,N_1,N_2} \sum_{L_0,L_1,L_2} I_2 \lesssim \|f\|_{X^{-s,1-b-}_{\pm,c}} \|g_1\|_{X^{s,b}_{\pm_1}} \|g_2\|_{X^{s,b}_{\pm_2}} \, . $$
\end{prop}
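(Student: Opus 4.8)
The plan is to reproduce, with the obvious modifications, the case distinction already carried out for $I_1$ in the preceding proposition. Abbreviate $f = P_{K^{\pm,c}_{N_0,L_0}}f$, $g_j = P_{K^{\pm_j}_{N_j,L_j}}g_j$, and split according to which of the three dyadic frequencies is the smallest, the other two being then comparable: \textbf{Case 1:} $N_0 \lesssim N_1 \sim N_2$; \textbf{Case 2:} $N_1 \lesssim N_0 \sim N_2$; \textbf{Case 3:} $N_2 \lesssim N_0 \sim N_1$. In each of these the hypothesis $L_{max} \gtrsim \max(N_1,N_2)$ forces one of $L_0,L_1,L_2$ to be $\gtrsim$ the large frequency, and I further subdivide according to which one that is, giving nine sub-cases in all, in complete parallel with Cases 1.1--1.3, 3.1--3.3 and the omitted Case 2 of the previous proof.

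In each sub-case I use Cauchy--Schwarz, placing in $L^2_{xt}$ the one among $f,g_1,g_2$ that carries the largest modulation and estimating the $L^2_{xt}$-norm of the frequency-localised product of the remaining two factors by the bilinear inequality of Proposition \ref{Prop.1.1}, whose proof is insensitive to the value of $c$ and hence applies to any combination of the cones involved; e.g.\ if $L_0$ is the largest one writes $|\int fg_1g_2\, dx dt| \le \|f\|_{L^2}\|P_{N_0}(g_1g_2)\|_{L^2}$. Expressing the $L^2_{xt}$-norms through the Bourgain norms via $\|f\|_{L^2}\sim N_0^{s}L_0^{-(1-b-)}\|f\|_{X^{-s,1-b-}_{\pm,c}}$ and $\|g_j\|_{L^2}\sim N_j^{-s}L_j^{-b}\|g_j\|_{X^{s,b}_{\pm_j}}$, multiplying in the prefactor $N_0N_1^{-1}N_2^{-1}$ of $I_2$ together with the output of Proposition \ref{Prop.1.1}, and spending part of the power of the largest modulation to recover a power of the large frequency through $L_{max}\gtrsim\max(N_1,N_2)$, every sub-case reduces to an estimate of the shape $N_0^{a_0}N_1^{a_1}N_2^{a_2}(L_0L_1L_2)^{0-}\|f\|_{X^{-s,1-b-}_{\pm,c}}\|g_1\|_{X^{s,b}_{\pm_1}}\|g_2\|_{X^{s,b}_{\pm_2}}$. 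Bounding the smallest frequency by the largest collapses this to $N_{max}^{a}(L_0L_1L_2)^{0-}(\dots)$, and once $a<0$ is verified the dyadic summation over all $N_j$ and $L_j$ is a convergent geometric series and the proposition follows.

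The only delicate point — and the real obstacle — is to confirm that the exponent $a$ of the large frequency is strictly negative exactly when $s>-\half$, with the choices $b=\half+$ and $\epsilon=b-\half+$, and this must be checked in the tightest sub-case, the analogue here of Case 3.3 above. Compared with $I_1$, the prefactor has become $N_0N_1^{-1}N_2^{-1}$ instead of $N_1^{-1}$ and the modulation weight on $f$ has shifted from $L_0^{-b}$ to $L_0^{-(1-b-)}$, while those on $g_1,g_2$ remain $L_j^{-b}$. In Cases 2 and 3, where $N_0$ is comparable to the large frequency, the extra factor $N_0$ is compensated by the two factors $N_{max}^{-s}$ arising from $g_1$ and $g_2$, which for $s$ near $-\half$ together behave like $N_{max}^{-1}$, so that no loss occurs; in Case 1, where $N_0$ is the smallest frequency, the factor $N_0$ is harmless and the bookkeeping is strictly more comfortable than for $I_1$. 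Carrying out these verifications yields the stated bound exactly as in the preceding proposition.
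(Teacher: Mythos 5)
Your architecture is exactly the paper's: the same split into Cases 1--3 according to the smallest frequency, sub-cases according to which modulation is $\gtrsim N_{max}$, Cauchy--Schwarz putting the factor with the large modulation in $L^2$, Prop.~\ref{Prop.1.1} for the product of the other two, conversion of $L^2$-norms into the weighted norms, trading part of the large modulation for a negative power of $N_{max}$, and dyadic summation with $b=\half+$, $\epsilon=b-\half+$. So the skeleton is the right one.

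The gap is that the only content this proposition has beyond the $I_1$ case --- the exponent count with the new prefactor $N_0N_1^{-1}N_2^{-1}$ and the new weight $N_0^{s}L_0^{b-1+}$ on $f$ --- is not actually carried out, and the heuristic you offer in its place is wrong. For $s$ close to $-\half$ one has $N^{-s}\approx N^{+\half}$, so the factors $N_1^{-s}$, $N_2^{-s}$ coming from $g_1,g_2$ \emph{grow}; moreover in Cases 2 and 3 only one of $g_1,g_2$ sits at the large frequency. They certainly do not ``together behave like $N_{max}^{-1}$'' and cannot compensate the extra $N_0\sim N_{max}$. What actually closes these cases is: the weight $N_0^{s}\approx N_{max}^{-\half}$ from the $X^{-s,1-b-}_{\pm,c}$ norm of $f$, which cancels the $N_{max}^{-s}$ of whichever $g_j$ sits at the comparable frequency; the fact that Prop.~\ref{Prop.1.1} only produces powers of the small frequencies; and the factor $\approx N_{max}^{b-1+}=N_{max}^{-\half+}$ gained from the modulation that is $\gtrsim N_{max}$, which absorbs the leftover $N_{min}^{-s}$ precisely when $s>-\half$ (e.g.\ the paper's Case 3.1 ends with $N_2^{-s+\epsilon}N_1^{b-1+}\lesssim N_1^{-s-\half+}$). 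Likewise, Case 1 for $I_2$ is not ``strictly more comfortable'' than for $I_1$, and Case 3.3 is not singularly tight: every sub-case closes at the same threshold $N_{max}^{-s-\half+}$, so all nine counts must be verified, as the paper does. As written, your proposal asserts the outcome of that bookkeeping without performing it, and on the one point where it gives a reason, the reason has the sign of $-s$ backwards.
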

\begin{proof}
	{\bf Case 1:} $ 1 \le N_0 \lesssim N_1 \sim N_2$ . \\
	Case 1.1: $N_1 \lesssim L_0$ . By Prop. \ref{Prop.1.1} we obtain
	\begin{align*}
	&N_0 N_1^{-1} N_2^{-1} |\int f g_1 g_2 \, dx dt| \le N_0 N_1^{-1} N_2^{-1} \|f\|_{L^2} \|P_{K_{N_0,L_0}} (g_1 g_2)\|_{L^2} \\
	& \lesssim N_0 N_1^{-1}  N_2^{-1} N_0^{\half + 2\epsilon} N_1^{\half-\epsilon} L_1^{\half} L_2^{\half-\epsilon} N_1^{b-1} L_0^{0-} N_0^s N_1^{-s} L_1^{-b} N_2^{-s} L_2^{-b} \\
	& \hspace{10em} \|f\|_{X^{-s,1-b-}_{\pm,c}} \|g_1\|_{X^{s,b}_{\pm_1}} \|g_2\|_{X^{s,b}_{\pm_2}} \\
& \lesssim N_0^{\frac{3}{2}+s+2\epsilon+} N_1^{-\frac{5}{2}+b-\epsilon-2s+} L_1^{\half-b} L_2^{\half-b-\epsilon} L_0^{0-} \\
& \hspace{10em} \|f\|_{X^{-s,1-b-}_{\pm,c}} \|g_1\|_{X^{s,b}_{\pm_1}} \|g_2\|_{X^{s,b}_{\pm_2}} \\
	& \lesssim N_1^{-1-s+b+\epsilon+} (L_0 L_1 L_2)^{0-} \|f\|_{X^{-s,1-b-}_{\pm,c}} \|g_1\|_{X^{s,b}_{\pm_1}} \|g_2\|_{X^{s,b}_{\pm_2}} \\
	& \lesssim N_1^{0-} (L_0 L_1 L_2)^{0-}\|f\|_{X^{s,b}_{\pm,c}} \|g_1\|_{X^{s,b}_{\pm_1}} \|g_2\|_{X^{-s,1-b-}_{\pm_2}} \, ,
	\end{align*}
	if $\epsilon = b-\half+$ and $s > -\half$ .\\		
Case 1.2: $N_1 \lesssim L_1$ . We obtain
\begin{align*}
&N_0 N_1^{-1} N_2^{-1} |\int f g_1 g_2 \, dx dt| \le N_0 N_1^{-1} N_2^{-1} \|g_1\|_{L^2} \|P_{K_{N_1,L_1}} (f g_2)\|_{L^2} \\
& \lesssim N_0 N_1^{-1} N_2^{-1} N_1^{\half + 2\epsilon} N_0^{\half-\epsilon} L_0^{\half} L_2^{\half-\epsilon} L_0^{b-1+} N_0^s N_1^{-s-b+} L_1^{0-} N_2^{-s} L_2^{-b} \\
& \hspace{10em} \|f\|_{X^{-s,1-b-}_{\pm,c}} \|g_1\|_{X^{s,b}_{\pm_1}} \|g_2\|_{X^{s,b}_{\pm_2}} \\
& \lesssim N_0^{\frac{3}{2}+s-\epsilon+} N_1^{-\frac{3}{2}-b+2\epsilon-2s+} L_0^{b-\half-\epsilon} L_1^{0-} L_2^{\half-b-\epsilon}  \\
& \hspace{10em} \|f\|_{X^{-s,1-b-}_{\pm,c}} \|g_1\|_{X^{s,b}_{\pm_1}} \|g_2\|_{X^{s,b}_{\pm_2}} \\
& \lesssim N_1^{-s-b+\epsilon+} (L_0 L_1 L_2)^{0-} \|f\|_{X^{-s,1-b-}_{\pm,c}} \|g_1\|_{X^{s,b}_{\pm_1}} \|g_2\|_{X^{s,b}_{\pm_2}} \\
& \lesssim N_1^{0-} (L_0 L_1 L_2)^{0-}\|f\|_{X^{s,b}_{\pm,c}} \|g_1\|_{X^{s,b}_{\pm_1}} \|g_2\|_{X^{-s,1-b-}_{\pm_2}} \, ,
\end{align*}
if $\epsilon = b-\half+$ and $s > -\half$ .\\		
Case 1.3: $N_1 \lesssim L_2$ .This case is similar to Case 1.2. and therefore omitted. \\
{\bf Case 2:} $ 1 \le N_1 \lesssim N_0 \sim N_2$ . This is treated similarly. \\
{\bf Case 3:} $ 1 \le N_2 \lesssim N_0 \sim N_2$ . \\
Case 3.1: $N_0 \lesssim L_0$ .
 We obtain
\begin{align*}
&N_0 N_1^{-1} N_2^{-1} |\int f g_1 g_2 \, dx dt| \le N_0 N_1^{-1} N_2^{-1} \|f\|_{L^2} \|P_{K_{N_0,L_0}} (g_1 g_2)\|_{L^2} \\
& \lesssim N_0 N_1^{-1} N_2^{-1} N_2^{\half + 2\epsilon} N_2^{\half-\epsilon} L_1^{\half} L_2^{\half-\epsilon} L_0^{0-} N_0^s N_0^{b-1+} N_1^{-s} L_1^{-b} N_2^{-s} L_2^{-b} \\
& \hspace{10em} \|f\|_{X^{-s,1-b-}_{\pm,c}} \|g_1\|_{X^{s,b}_{\pm_1}} \|g_2\|_{X^{s,b}_{\pm_2}} \\
& \lesssim N_2^{-s+\epsilon} N_1^{b-1+} (L_0 L_1 L_2)^{0-}  \|f\|_{X^{-s,1-b-}_{\pm,c}} \|g_1\|_{X^{s,b}_{\pm_1}} \|g_2\|_{X^{s,b}_{\pm_2}} \\
& \lesssim N_1^{-s-\half+} (L_0 L_1 L_2)^{0-} \|f\|_{X^{-s,1-b-}_{\pm,c}} \|g_1\|_{X^{s,b}_{\pm_1}} \|g_2\|_{X^{s,b}_{\pm_2}}  \, ,
\end{align*}
if $\epsilon = b-\half+$ . This implies the desired bound, if $s > -\half$ .\\		
Case 3.2: $N_0 \lesssim L_1$ .
We obtain
\begin{align*}
&N_0 N_1^{-1} N_2^{-1} |\int f g_1 g_2 \, dx dt| \le N_0 N_1^{-1} N_2^{-1} \|g_1\|_{L^2} \|P_{K_{N_1,L_1}} (f g_2)\|_{L^2} \\
& \lesssim N_0 N_1^{-1} N_2^{-1} N_2^{\half + 2\epsilon} N_2^{\half-\epsilon} L_0^{\half-\epsilon} L_2^{\half}N_0^s L_0^{b-1+}  N_1^{-s} N_0^{-b+} L_1^{0-} N_2^{-s} L_2^{-b} \\
& \hspace{10em} \|f\|_{X^{-s,1-b-}_{\pm,c}} \|g_1\|_{X^{s,b}_{\pm_1}} \|g_2\|_{X^{s,b}_{\pm_2}} \\
& \lesssim N_2^{-s+\epsilon} N_1^{-b+} (L_0 L_1 L_2)^{0-} \|f\|_{X^{-s,1-b-}_{\pm,c}} \|g_1\|_{X^{s,b}_{\pm_1}} \|g_2\|_{X^{s,b}_{\pm_2}} \\
& \lesssim N_1^{-s-\half+} (L_0 L_1 L_2)^{0-} \|f\|_{X^{-s,1-b-}_{\pm,c}} \|g_1\|_{X^{s,b}_{\pm_1}} \|g_2\|_{X^{s,b}_{\pm_2}}  \, ,
\end{align*}
if $\epsilon = b-\half+$ . This implies the desired bound, if $s > -\half$ .\\	
Case 3.3: $N_0 \lesssim L_2$ .
We obtain
\begin{align*}
&N_0 N_1^{-1} N_2^{-1} |\int f g_1 g_2 \, dx dt| \le N_0 N_1^{-1} N_2^{-1} \|g_2\|_{L^2} \|P_{K_{N_2,L_2}} (f g_1)\|_{L^2} \\
& \lesssim N_0 N_1^{-1} N_2^{-1} N_2^{\half + 2\epsilon} N_1^{\half-\epsilon} L_0^{\half-\epsilon} L_1^{\half}N_0^s L_0^{b-1+}  N_1^{-s}L_1^{-b}  N_2^{-s} N_0^{-b+}  L_2^{0-} \\
& \hspace{10em} \|f\|_{X^{-s,1-b-}_{\pm,c}} \|g_1\|_{X^{s,b}_{\pm_1}} \|g_2\|_{X^{s,b}_{\pm_2}} \\
& \lesssim N_2^{-\half-s+2\epsilon} N_1^{\half-b- \epsilon+} (L_0 L_1 L_2)^{0-} \|f\|_{X^{-s,1-b-}_{\pm,c}} \|g_1\|_{X^{s,b}_{\pm_1}} \|g_2\|_{X^{s,b}_{\pm_2}} \\
& \lesssim N_1^{-s-\half+} (L_0 L_1 L_2)^{0-} \|f\|_{X^{-s,1-b-}_{\pm,c}} \|g_1\|_{X^{s,b}_{\pm_1}} \|g_2\|_{X^{s,b}_{\pm_2}}  \, ,
\end{align*}
if $\epsilon = b-\half+$ . This implies the desired bound, if $s > -\half$ .\\
Dyadic summation gives the claimed result.
\end{proof}

{\bf Remark:} These results imply, that the estimates (\ref{1.5}) and (\ref{1.6}) are true , provided $ s >-\half$ , and either $\pm_1 = \pm_2$ or $\pm_1 \neq \pm_2$ and $|\xi_1| \nsim |\xi_2|$ in the sense of Lemma \ref{Lemma1.2}, namely $|\xi_2| \le \frac{1-c}{2(1+c)} |\xi_1|$ or  $|\xi_1| \le \frac{1-c}{2(1+c)}|\xi_2|$ .

\section{The case $\pm_1 \neq \pm_2$}

It suffices to prove
\begin{equation}
\label{2.0}
\sum_{1 \le N_0,N_1,N_2,L_0,L_1,L_2} I_1 \lesssim \|f\|_{X^{s,b}_{\pm,c}} \|g_1\|_{X^{s,b}_-}
\|g_2\|_{X^{-s,1-b-}_+} \, , 
\end{equation}
where 
$$ I_1 := N_1^{-1} \int (P_{K^{\pm,c}_{N_0,L_0}} f) (P_{K_{N_1,L_1}^-}g_1)(P_{K_{N_2,L_2}^+} g_2) dt dx \, , $$
and
\begin{equation}
\label{2.0'}
\sum_{1 \le N_0,N_1,N_2,L_0,L_1,L_2} I_2 \lesssim \|f\|_{X^{-s,1-b-}_{\pm,c}} \|g_1\|_{X^{s,b}_-}
\|g_2\|_{X^{s,b}_+} \, , 
\end{equation}
where 
$$ I_2 := N_0 N_1^{-1}N_2^{-1} \int (P_{K^{\pm,c}_{N_0,L_0}} f) (P_{K_{N_1,L_1}^-}g_1)(P_{K_{N_2,L_2}^+} g_2) dt dx \, . $$

It remains to consider the case $1 \le N_0 \lesssim N_1 \sim N_2$ , more precisely we may assume $|\xi_2| \ge \frac{1-c}{2(1+c)} |\xi_1|$ and  $|\xi_1| \ge \frac{1-c}{2(1+c)}|\xi_2|$. These assumptions imply  
\begin{equation}
\label{N1N2}
N_1 \le  2^4(1-c)^{-1} N_2 \quad{\mbox and}\quad N_2 \le  2^4(1-c)^{-1} N_1 \, ,
\end{equation}
as one easily checks.\\[0.2em]

In this case we need a further decomposition with respect to angular variables.
Similar decompositions were also used by Bejenaru-Herr \cite{BH} and Kinoshita \cite{K}.

 Decompose $\mathcal{S}^2$ by $\{\omega^j_A\}_{j \in \Omega_A}$ for each $A \in {\mathbb N}$ with the properties
\begin{enumerate}
	\item $\angle (x,y) \le A^{-1}$ $\forall x,y \in \omega^j_A$ ,
	\item ${\mathcal S}^2 = \cup_{j \in \Omega_A} \omega^j_A$ almost disjoint, i.e.  $1 \le \sum_{j \in \Omega_A} \chi_{\omega^j_A} (x) \le 3$ $\forall x \in {\mathcal S} \, . $ Any two centers of $\omega^j_A$ are separated by a distance $ \sim A^{-1}$.
\end{enumerate}

Define 
$$ \alpha(j_1,j_2) = \inf \{ |\angle (x,y)| : x \in \omega^{j_1}_A , y \in \omega^{j_2}_A\} $$ 
and 
$$ Q^j_A = \{(\xi,\tau) \in (\mathbb R^3  \setminus\{0\}) \times \mathbb R : \frac{\xi}{|\xi|} \in \omega^j_A \} \, . $$\\[0.2em]
{\bf First we consider the case $ 0 \le \angle(\xi_1,\xi_2) \le \frac{\pi}{2}$ .}

\begin{prop}
	\label{Prop.2.1}
Let $f,g_1,g_2 \in L^2$ and
$$ supp\, f \in K^{\pm,c}_{N_0,L_0} \, , \, supp \, g_1 \in Q^{j_1}_A \cap K^-_{N_1,L_1} \, , \, supp \, g_2 \in Q^{j_2}_A \cap K^+_{N_2,L_2} $$
and $ 1 \ll N_0 \lesssim N_1 \sim N_2$ , more precisely $
N_1 \le  2^4(1-c)^{-1} N_2 $ and $ N_2 \le  2^4(1-c)^{-1} N_1 $ ,
 moreover assume $ 8 \le A $
  , $\frac{1}{2A} \le \alpha(j_1,j_2) \le \frac{2}{A} $ . Then
$$ I:= \int f(\xi_1+\xi_2,\tau_1+\tau_2) g_1(\xi_1,\tau_1) g_2(\xi_2,\tau_2) d\xi_1 d\tau_1 d\xi_2 d\tau_2  $$
satisfies
$$ |I(f,g_1,g_2)| \lesssim N_1^{\half} (L_0 L_1 L_2)^{\half} \|f\|_{L^2} \|g_1\|_{L^2} \|g_2\|_{L^2} \, . $$
\end{prop}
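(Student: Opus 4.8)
The plan is to reduce the trilinear bound to a measure estimate for a resonant set and then exploit transversality. By Cauchy--Schwarz in the output frequency, $|I(f,g_1,g_2)|\le\|f\|_{L^2}\,\|g_1*g_2\|_{L^2(K^{\pm,c}_{N_0,L_0})}$ (with $*$ the convolution), so it suffices to prove $\|g_1*g_2\|_{L^2(K^{\pm,c}_{N_0,L_0})}\lesssim N_1^{\half}(L_0L_1L_2)^{\half}\|g_1\|_{L^2}\|g_2\|_{L^2}$. Since $(g_1*g_2)(\xi,\tau)=\int_{E(\xi,\tau)}g_1(\xi_1,\tau_1)g_2(\xi-\xi_1,\tau-\tau_1)\,d\xi_1d\tau_1$ with $E(\xi,\tau)=\{(\xi_1,\tau_1):(\xi_1,\tau_1)\in\mathrm{supp}\,g_1,\ (\xi-\xi_1,\tau-\tau_1)\in\mathrm{supp}\,g_2\}$, a further Cauchy--Schwarz reduces matters to estimating the measure of $E(\xi,\tau)$, uniformly over $(\xi,\tau)\in K^{\pm,c}_{N_0,L_0}$. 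The geometry here is governed by the resonance relation obtained by adding the two modulation constraints, $|\xi_1|-|\xi-\xi_1|=\tau+O(\max(L_1,L_2))$, together with $\tau=\mp_0 c|\xi|+O(L_0)$ coming from $\mathrm{supp}\,f$ and $|\xi_1|\sim|\xi-\xi_1|\sim N_1$.

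First I would establish a pointwise measure bound. Integrating out $\tau_1$ costs the factor $\min(L_1,L_2)$; for fixed $(\xi,\tau)$ the remaining $\xi_1$ lies in a thin neighbourhood of the circle cut out by $|\xi_1|\sim N_1$, $|\xi-\xi_1|\sim N_1$ and the resonance hypersurface, and its thickness is controlled by two transversality mechanisms: the angular separation $\alpha(j_1,j_2)\sim A^{-1}$, which keeps $1+\widehat{\xi_1}\cdot\widehat{\xi-\xi_1}$ (the radial derivative of $\xi_1\mapsto|\xi_1|-|\xi-\xi_1|$) bounded below, and the speed gap $0<c<1$, which keeps the light cones transverse to the slow cone $\{\tau\pm_0 c|\xi|=0\}$ and is responsible for the $(1-c)^{-1}$-type constants. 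This yields $|E(\xi,\tau)|\lesssim A^{-2}N_1^2L_1L_2$, which after the two Cauchy--Schwarz steps already gives the claim in the range $N_1\lesssim A^2L_0$; pulling out $g_1$ or $g_2$ in place of $f$ and using the $1-c$ transversality gives the analogous variants, covering $N_1\lesssim A^2(1-c)\max(L_1,L_2)$.

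In the remaining regime the naive estimate loses a power of $N_1$, which I would recover by the Bejenaru--Herr device. One refines the angular decomposition of $\omega^{j_1}_A$ and $\omega^{j_2}_A$ to a suitable scale $\mu^{-1}\le A^{-1}$, writes $g_i=\sum_{b_i}g_{i,b_i}$ and hence $g_1*g_2=\sum_{b_1,b_2}g_{1,b_1}*g_{2,b_2}$, and uses the curvature of the light cone: because for a fixed output on the slow cone the admissible $\xi_1$ is confined to a one-dimensional circle rather than a two-dimensional cap, only a controlled number of the pairs $(b_1,b_2)$ are active at any output point. A Cauchy--Schwarz in $(b_1,b_2)$ then loses only that number, which is offset by the single-piece bound of the previous step applied at scale $\mu$; optimising $\mu$, subject to the ceilings imposed by the frequency resolution ($\mu\lesssim N_1$) and by the thickness of the circle, and using the $1-c$ variants where needed, produces exactly the gain required to reach $N_1^{\half}(L_0L_1L_2)^{\half}$.

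The main obstacle is this geometric bookkeeping: proving the circle structure and the bound on the number of active sub-cap pairs with constants uniform in $A$, in $N_0,N_1,N_2,L_0,L_1,L_2$, and in $c\in(0,1)$ (keeping track of the $1-c$ dependence throughout), and reconciling it with the almost-orthogonality. Balancing the two transversality sources --- the angular separation $\sim A^{-1}$ and the speed gap $1-c$ --- against each other and against the refinement scale $\mu$ is the delicate point, and it is here that the methods of Bejenaru--Herr and of Kinoshita are essential.
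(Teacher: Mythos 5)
Your reduction via two applications of Cauchy--Schwarz and a measure bound for $E(\xi,\tau)$ is fine as far as it goes, but it only yields the claimed estimate in the regime $N_1\lesssim A^2\max(L_0,L_1,L_2)$ (up to $(1-c)$-factors), and the main content of the proposition is precisely the complementary regime where all modulations are small, e.g.\ $L_0=L_1=L_2=1$, $A$ bounded, $N_1$ huge. For that regime your mechanism does not work, and the key geometric claim on which it rests is incorrect: for a \emph{fixed} output point $(\xi,\tau)$ with $\tau\approx\mp c|\xi|$, the admissible $\xi_1$ is not confined to a one-dimensional circle but to (a thickened piece of) the two-dimensional hyperboloid sheet $\{|\xi_1|-|\xi-\xi_1|=\tau\}$, which is rotationally symmetric about the $\xi$-axis and along which $|\xi_1|$ is not pinned; only after additionally fixing $|\xi_1|$ (equivalently $\tau_1$) does one get a circle. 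Consequently the set of directions $\hat{\xi_1}$ compatible with a fixed output fills a two-dimensional region of angular size up to $A^{-1}\times A^{-1}$, so at refinement scale $\mu^{-1}$ the number of active cap pairs is $\kappa\sim(\mu/A)^2$ (with $b_2$ determined by $b_1$ up to $O(1)$), not $O(1)$. Feeding this into your scheme, the per-pair input you propose is again only the measure bound, $\|g_{1,b_1}\ast g_{2,b_2}\|_{L^2(K^{\pm,c}_{N_0,L_0})}\lesssim (N_1/\mu)(L_1L_2)^{\frac{1}{2}}\|g_{1,b_1}\|_{L^2}\|g_{2,b_2}\|_{L^2}$, and Cauchy--Schwarz in the pairs gives $\kappa^{\frac{1}{2}}\,(N_1/\mu)\sim N_1/A$: the $\mu$-dependence cancels and no choice of $\mu$ produces the required gain down to $N_1^{\frac{1}{2}}$ when $A\ll N_1^{\frac{1}{2}}$. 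In other words, angular refinement plus counting of interacting caps, with only measure bounds at the finest scale, cannot prove the proposition.

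What is missing is the genuinely \emph{trilinear} transversality input, and this is exactly where the paper's proof lives: after slicing in modulation (which produces the factor $(L_0L_1L_2)^{\frac{1}{2}}$), rescaling by $N_1$, foliating the thickened slow-cone region into leaves of thickness $N_1^{-1}$ (this foliation, not cap counting, is the source of the $N_1^{\frac{1}{2}}$ gain) and further slicing along the direction $v$ orthogonal to the two cone directions, one reduces to a convolution estimate for $L^2$ densities on three transversal hypersurfaces in $\mathbb{R}^4$ and invokes Bejenaru--Herr's nonlinear Loomis--Whitney theorem, with the determinant lower bound $d\gtrsim(1-c)A^{-1}$ coming from the angular separation $\alpha(j_1,j_2)\sim A^{-1}$ and the speed gap; the $A^{\frac{1}{2}}$ loss from $d^{-\frac{1}{2}}$ is exactly compensated by the $A^{-\frac{1}{2}}$ gained in the $v$-foliation, and the verification of the $C^{1,\beta}$ hypotheses after the normalizing linear map $T$ (with uniform constants in $A$, the $N$'s, $L$'s and $c$) is the technical body of the proof. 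Your proposal defers precisely this step to ``the methods of Bejenaru--Herr'', so the central argument of the proposition is not actually supplied, and the surrogate you offer for it (curvature-based cap counting) fails quantitatively as explained above.
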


\begin{proof}
We apply the transformation $\tau_1=|\xi_1|+c_1$ , $\tau_2 = |\xi_2| + c_2$ for fixed $\xi_1,\xi_2$ . By decomposing $f$ into $L_0$ pieces we reduce to proving
\begin{align}
\nonumber
\left|\int f(\phi^+_{c_1}(\xi_1) + \phi^+_{c_2}(\xi_2)) g_1(\phi^+_{c_1}(\xi_1)) g_2(\phi^-_{c_2}(\xi_2)) \, d\xi_1 d\xi_2 \right| \\\label{2.1}
\lesssim N_1^{\half} \|f\|_{L^2_{\xi \tau}} \|g_1 \circ \phi^+_{c_1}\|_{L^2_{\xi \tau}} \|g_2 \circ \phi^-_{c_2}\|_{L^2_{\xi \tau}} \, ,
\end{align}
where $\phi^{\pm}_{c_k}(\xi) := (\xi, \pm |\xi|+c_k)$ and
$$ supp \, f \subset \{(\xi,\tau) \in Q^j_A \, , \, c_0 \le \tau \pm c|\xi|\le c_0+1 \} \, , $$
$c_0$ fixed, with an implicit constant independent of $c_0$ . We may assume $(\xi,\tau) \in Q^j_A$ here with $\alpha(j_1,j) \sim A^{-1}$ by further decomposition, because $(\xi_k,\tau_k) \in Q^{j_k}_A$ with $\alpha(j_1,j_2) \sim A^{-1}$ . 

We scale $(\xi,\tau) \mapsto (N_1 \xi,N_1 \tau)$ and define
$$\tilde{f}(\xi,\tau) = f(N_1 \xi, N_1 \tau) \, , \, \tilde{g_k}(\xi_k,\tau_k) = g_k(N_1 \xi_k, N_1 \tau_k) \, , \, \tilde{c_k} = \frac{c_k}{N_1} \, . $$
$(\ref{2.1})$ reduces to
\begin{align}
\nonumber
\left|\int \tilde{f}(\phi^+_{\tilde{c_1}}(\xi_1) +\phi^+_{\tilde{c_2}}(\xi_2)) \tilde{g_1}(\phi^+_{\tilde{c_1}}(\xi_1)) \tilde{g_2}(\phi^-_{\tilde{c_2}}(\xi_2)) \, d\xi_1 d\xi_2 \right| \\
\label{2.2}
	\lesssim N_1^{-\half} \|\tilde{f}\|_{L^2_{\xi \tau}} \|\tilde{g_1} \circ \phi^+_{\tilde{c_1}}\|_{L^2_{\xi \tau}} \|\tilde{g_2} \circ \phi^-_{\tilde{c_2}}\|_{L^2_{\xi \tau}} \, .
\end{align}
Namely, if $(\ref{2.2})$ is satisfied, we obtain by defining $\tilde{\xi_k} = N_1 \xi_k$ that the left hand side of (\ref{2.2}) equals
$$ \left|\int f(\tilde{\xi_1}+\tilde{\xi_2},|\tilde{\xi_1}|-|\tilde{\xi_2}| + \tilde{c_1}-\tilde{c_2}) \tilde{g_1}(\tilde{\xi_1},|\tilde{\xi_1}|+\tilde{c_1}) \tilde{g_2}(\tilde{\xi_2},|\tilde{\xi_2}|+\tilde{c_2}) \, d\tilde{\xi_1} d\tilde{\xi_2} \right| N_1^{-6} \, . $$
Thus \begin{align*}
LHS \,of\, (\ref{2.1})
 & \lesssim N_1^6 N_1^{-\half} \|\tilde{f}\|_{L^2_{\xi \tau}} \|\tilde{g_1} \circ \phi^+_{\tilde{c_1}} \|_{L^2_{\xi}}  \|\tilde{g_2} \circ \phi^-_{\tilde{c_2}} \|_{L^2_{\xi}} \\
 & = N_1^6 N_1^{-\half} \|f(N_1\xi,N_1 \tau)\|_{L^2_{\xi \tau}} \|g_1(N_1\xi_1,N_1 |\xi_1|+c_1)\|_{L^2_{\xi_1}} \\
 & \quad \quad \quad \quad \quad \quad \cdot\|g_2(N_1\xi_2,-N_1 |\xi_2|+c_2)\|_{L^2_{\xi_2}} \\
 & \lesssim N_1^6 N_1^{-\half} N_1^{-2} \|f\|_{L^2_{\xi \tau}} N_1^{-\frac{3}{2}} \|g_1 \circ \phi^+_{c_1}\|_{L^2_{\xi}}N_1^{-\frac{3}{2}} \|g_2 \circ \phi^-_{c_2}\|_{L^2_{\xi}} \\
 & = N_1^{\half} \|f\|_{L^2_{\xi \tau}}  \|g_1 \circ \phi^+_{c_1}\|_{L^2_{\xi}} \|g_2 \circ \phi^-_{c_2}\|_{L^2_{\xi}} \, .
\end{align*}
Thus (\ref{2.1}) follows from the estimate
$$ \| \tilde{g_1}_{| S_1} \ast \tilde{g_2}_{| S_2}\|_{L^2(S_3^{\pm}(N_1^{-1}))} \lesssim N_1^{-\half} \|\tilde{g_1}\|_{L^2(S_1)} \|\tilde{g_2}\|_{L^2(S_2]} \, , $$
which we now prove. Here we use our assumption (\ref{N1N2}) and obtain
\begin{align*}
S_1 & = \{(\xi_1,\tau_1) \in Q^{j_1}_A \, , \, \half \le |\xi_1| \le 2 \, , \, \tau_1=|\xi_1|+\tilde{c_1} \} \\
S_2 & = \{(\xi_2,\tau_2) \in Q^{j_2}_A \, , \, (1-c)2^{-5} \le |\xi_2| \le 2^5 (1-c)^{-1} \, , \, \tau_2=-|\xi_2|+\tilde{c_2} \}
\end{align*}
Note that $\tilde{f}$ is supported in
$$ S^{\pm}_3(N_1^{-1}) = \{(\xi,\tau) \in Q^j_A \, , \, \frac{N_0}{2N_1} \le |\xi| \le \frac{2N_0}{N_1} \, , \, \psi^{\pm}(\xi) \le \tau \le \psi^{\pm}(\xi) + \frac{1}{N_1} \} \, ,$$
where 
$$ \psi^{\pm}(\xi) := \mp c|\xi|+\frac{c_0}{N_1} \, . $$
We further decompose $f,g_1,g_2$ as follows into a finite number of pieces:
$$ f = \sum_{j' = j^0}^{j^0 +k} \chi_{Q^{j'}_{kA}} f \, , \, g_1 =  \sum_{j_1' = j_1^0}^{j_1^0 +k} \chi_{Q^{j_1'}_{kA}} g_1 \, , \,  g_2 =  \sum_{j_2' = j_2^0}^{j_2^0 +k} \chi_{Q^{j_2'}_{kA}} g_2 \, , $$
where $ k = 2^{40} (1-c)^{-2} $ . Thus we may assume that
$$ supp \, f \subset Q^{j'}_{kA} \, , \, supp \,g_1 \subset Q^{j_1'}_{kA} \, , \, supp \,g_2 \subset Q^{j_2'}_{kA} $$
with fixed $j',j_1',j_2'$ , $\frac{1}{2A} \le \alpha(j',j_1') \le \frac{2}{A}$ , $\frac{1}{2A} \le \alpha(j',j_2') \le \frac{2}{A}$ , $\frac{1}{2A} \le \alpha(j_1',j_2') \le \frac{2}{A}$. 

This implies that we may assume in $S^{\pm}_3(N_1^{-1})$ that $\half \le |\xi|$ , because $\xi=\xi_1+\xi_2$ with $\xi_l \in Q^{j_l'}_{kA}$ . Namely, if $\xi_1=(|\xi_1|,0,0)$ by rotation and $\angle(\xi_1,\xi_2) \le A^{-1}$, then
$$ \frac{||\xi_2|-\xi_{21}|}{|\xi_2|} \le |(1,0,0)-\frac{\xi_2}{|\xi_2|} | \le A^{-1} \, ,$$  which (for $A\ge 2$) implies $\xi_{21} \ge 0$ , thus 
$$|\xi|=|\xi_1+\xi_2| = |(|\xi_1|+\xi_{21},\xi_{22},\xi_{23})| \ge |\xi_1| \ge \half \, .$$ 
This means that we may consider from now on
\begin{align*} 
S_1 & = \{(\xi_1,\tau_1) \in Q^{j_1'}_{kA} \, , \, \half \le |\xi_1| \le 2 \, , \, \tau_1=|\xi_1|+\tilde{c_1} \} \\
S_2 & = \{(\xi_2,\tau_2) \in Q^{j_2'}_{kA} \, , \, (1-c)2^{-4} \le |\xi_2| \le 2^4 (1-c)^{-1}\, , \, \tau_2=-|\xi_2|+\tilde{c_2} \} \\
 S^{\pm}_3(N_1^{-1}) &= \{(\xi,\tau) \in Q^{j'}_{kA} \, , \, \half \le |\xi| \le 2^5(1-c)^{-1} \, , \, \psi^{\pm}(\xi) \le \tau \le \psi^{\pm}(\xi) + \frac{1}{N_1} \} \, .
\end{align*}
Defining
$$ S_3^{\pm,h} = \{(\xi,\tau) \in Q^{j'}_{kA} \, , \, \half \le |\xi| \le 2^5(1-c)^{-1} \, , \, \psi_{\pm}(\xi)-\tau = h \} \, ,$$  where $0 \le h \le N_1^{-1} \, , $
we obtain
$$\|\phi\|_{L^2(S^{\pm}_3(N_1^{-1}))} = \left(\int_0^{N_1^{-1}} \|\phi\|^2_{L^2(S_3^{\pm,h})} dh \right)^{\half} \le N_1^{-\half} \sup_h \|\phi\|_{L^2(S_3^{\pm,h})} \, . $$
Therefore it suffices to prove
\begin{equation}
\label{2.3}\| \tilde{g_1}_{|S_1} \ast \tilde{g_2}_{|S_2}\|_{L^2(S_3^{\pm,h})} \lesssim \|\tilde{g_1}\|_{L^2(S_1)} \|\tilde{g_2}\|_{L^2(S_2)} \, .
\end{equation}
The unit normals on $S_1,S_2,S_3^{\pm,h}$ are given by
$$n_{S_1}(\xi_1) = 2^{-\half} (-\frac{\xi_1}{|\xi_1|},1) \, , \, n_{S_2}(\xi_2) = 2^{-\half} (\frac{\xi_2}{|\xi_2|},1) \, , \, n_{S_3}(\xi) = (1+c^2)^{-\half} ( \pm c \frac{\xi}{|\xi|},1) \, . $$
We now define another normal on $S_3^{\pm,h}$ orthogonal to $n_{S_3}(\xi)$ . Let $v$ be a unit vector. Then
$$ |\det(\frac{\xi_1}{|\xi_1|},\frac{\xi_2}{|\xi_2|},v)| = |\langle v, \frac{\xi_1}{|\xi_1|} \times \frac{\xi_2}{|\xi_2|} \rangle |\, . $$
Let $ \frac{{\xi_i}^0}{|{\xi_i}^0|}$  be the centers of $\omega^{j_i'}_{kA}$ .
By the assumption $\alpha(j_1',j_2') \ge \frac{1}{2A}$ we obtain
\begin{equation}
\label{2.3'}
\left| \frac{{\xi_1}^0}{|{\xi_1}^0|} \times \frac{{\xi_2}^0}{|{\xi_2}^0|} \right| = \left| \sin \angle(\frac{{\xi_1}^0}{|{\xi_1}^0|},\frac{{\xi_2}^0}{{|\xi_2}^0|})\right| \ge \frac{1}{3A} \, .
	\end{equation}
Define
$$ v = \frac{ \frac{{\xi_1}^0}{|{\xi_1}^0|} \times \frac{{\xi_2}^0}{|{\xi_2}^0|} }{| \frac{{\xi_1}^0}{|{\xi_1}^0|} \times \frac{{\xi_2}^0}{|{\xi_2}^0|}|} \, .$$
This implies
$$ \left|\det(\frac{{\xi_1}^0}{{|\xi_1}^0|},\frac{{\xi_2}^0}{|{\xi_2}^0|},v)\right| = \left|\langle v, \frac{{\xi_1}^0}{|{\xi_1}^0|} \times \frac{{\xi_2}^0}{|{\xi_2}^0|} \rangle \right| \ge \frac{1}{3A} $$
and $\langle v, \frac{\pm {\xi_1}^0}{|{\xi_1}^0|} \rangle =0$ . Moreover we obtain $\forall \, \xi_1 \in Q^{j_1'}_{kA} \, , \, \xi_2 \in Q^{j_2'}_{kA}$ :
\begin{align*}
 &\left|\langle v, \frac{\xi_1}{|\xi_1|} \times \frac{\xi_2}{|\xi_2|} \rangle  - \langle v, \frac{{\xi_1}^0}{|{\xi_1}^0|} \times \frac{{\xi_2}^0}{|{\xi_2}^0|} \rangle\right| \\
 & \le \left|(\frac{\xi_1}{|\xi_1|}-\frac{{\xi_1}^0}{|{\xi_1}^0|})\times \frac{\xi_2}{|\xi_2|}\right| + \left|\frac{{\xi_1}^0}{|{\xi_1}^0|} \times (\frac{\xi_2}{|\xi_2|}-\frac{{\xi_2}^0}{|{\xi_2}^0|})\right| \le \frac{2}{kA} \, ,
\end{align*}
so that
\begin{equation}
\label{2.4}
 |\det(\frac{\xi_1}{|\xi_1|},\frac{\xi_2}{|\xi_2|},v)| \ge \frac{1}{4A} \, . 
\end{equation}

We foliate $S_3^{\pm,h}$ as follows: $S_3^{\pm,h} = \cup_{\tilde{d}} \, S_{3,\tilde{d}}^{\pm}$ , where
$$ S_{3,\tilde{d}}^{\pm} = S_3^{\pm,h} \cap \{\langle \xi,v \rangle = \tilde{d}\} \, . $$
For $\xi \in Q^{j'}_{kA}$ , $\xi_1 \in Q^{j_1'}_{kA}$ the estimate
$$\left|\langle v,\frac{\xi}{|\xi|}\rangle \right| = \left|\langle v,\frac{\xi}{|\xi|} \pm \frac{{\xi_1}^0}{|{\xi_1}^0|}\rangle\right| \le \min_{\pm}\left|\frac{\xi}{|\xi|} \pm \frac{{\xi_1}^0}{|{\xi_1}^0|}\right| \lesssim A^{-1} $$
applies, because $\alpha(j',j_1') \sim A^{-1}$ , so that using $|\xi| \lesssim 1$ :
$$ |\tilde{d}| = |\langle v,\xi \rangle| \lesssim |\xi| A^{-1} \lesssim A^{-1} \, . $$
This implies
$$ \|\phi\|_{L^2(S_3^{\pm,h})} = (\int_{-A^{-1}}^{A^{-1}} \|\phi\|_{L^2(S^{\pm}_{3,\tilde{d}})} d \tilde{d})^{\half} \lesssim A^{-\half} \sup_{\tilde{d}} \|\psi\|_{L^2(S^{\pm}_{3,\tilde{d}})} \, . $$
This reduces (\ref{2.3}) to
\begin{equation}
\label{2.5}
\| \tilde{g_1}_{| S_1} \ast \tilde{g_2}_{| S_2} \|_{L^2(S_{3,\tilde{d}}^{\pm})} \lesssim A^{\half} \|\tilde{g_1}\|_{L^2(S_1)} \|\tilde{g_2}\|_{L^2(S_2)} \, . 
\end{equation}
Another normal to ${S_{3,\tilde{d}}^{\pm}}$ is given by $n_{S_{3,\tilde{d}}} =(v,0)$ independently of $\tilde{d}$ . We obtain an orthonormal system by replacing $(v,0)$ by
	$$ n_{S_3'} = \frac{n_{S_{3,\tilde{d}}} - \langle n_{S_3},n_{{S_{3,\tilde{d}}}} \rangle n_{S_3}}{|{n_{S_{3,\tilde{d}}} - \langle n_{S_3},n_{{S_{3,\tilde{d}}}} \rangle n_{S_3}|}} \, , $$
	where we remark that the denominator fulfills
	\begin{align}
	\nonumber
2 &\ge |n_{S_{3,\tilde{d}}} - \langle n_{S_3},n_{{S_{3,\tilde{d}}}} \rangle n_{S_3}| 
	=|(v,0)\mp \frac{c^2}{1+c^2} \langle \frac{\xi}{|\xi|},v\rangle (\frac{\xi}{|\xi|},1)| \\
	\label{a}
&	\ge  |v|-\frac{c^2}{1+c^2}|\langle\frac{\xi}{|\xi|},v\rangle| \ge 1-\frac{c^2}{1+c^2} \ge \half
	\end{align}
We obtain
\begin{align*}
d &:= |\det(n_{S_1},n_{S_2},n_{S_3},n_{{S_3}'})| = \frac{|\det(n_{S_1},n_{S_2},n_{S_3},n_{S_{3,\tilde{d}}})|}{|n_{S_{3,\tilde{d}}}
 - \langle n_{S_3},n_{S_{3,\tilde{d}}} \rangle n_{S_3} |} \\
& \ge \half |\det(n_{S_1},n_{S_2},n_{S_3},n_{S_{3,\tilde{d}}})| 
= \frac{1}{4 \sqrt{1+c^2}}\left| \begin{array}{rrrr}                                
-\frac{\xi_1}{|\xi_1|} & \frac{\xi_2}{|\xi_2|} & \pm c \frac{\xi}{|\xi|} & v \\                                               
1 & 1 & 1 & 0 \\                                                                                             
\end{array}
\right| \\
& \ge \frac{1}{4 \sqrt{1+c^2}} (|\det(\frac{\xi_1}{|\xi_1|},\frac{\xi_2}{|\xi_2|},v)| - |R|) \, ,
\end{align*}	
where by $\xi=\xi_1+\xi_2$ we obtain
\begin{align*}
|R| & = |-\det(\frac{\xi_2}{|\xi_2|},\pm c \frac{\xi}{|\xi|},v) + \det(-\frac{\xi_1}{|\xi_1|},\pm c \frac{\xi}{|\xi|},v)| \\
& = |\mp \frac{c}{|\xi_2|\, |\xi|} \det(\xi_2,\xi,v) \mp \frac{c}{|\xi_1|\,|\xi|} \det(\xi_1,\xi,v)| \\
& = \frac{c}{|\xi|} \left| \frac{1}{|\xi_2|} \det(\xi_2,\xi_1,v) + \frac{1}{|\xi_1|} \det(\xi_1,\xi_2,v)\right| = \frac{c}{|\xi|}\left| \frac{1}{|\xi_1|} - \frac{1}{|\xi_2|} \right| |\det(\xi_1,\xi_2,v)| \\
& \le \frac{c}{|\xi|} \frac{||\xi_1|-|\xi_2||}{|\xi_1|\,|\xi_2|} |\det(\xi_1,\xi_2,v)|
\le c |\det(\frac{\xi_1}{|\xi_1|}\frac{\xi_2}{|\xi_2|},v)| \, .
 \end{align*}
 This implies by (\ref{2.4}):
 \begin{equation}
 \label{2.6}
 d \ge \frac{1-c}{4 \sqrt{1+c^2}}|\det(\frac{\xi_1}{|\xi_1|}\frac{\xi_2}{|\xi_2|},v)| \ge \frac{1-c}{2^5 A} \, .
 \end{equation}
 Let
 $$ \sigma_1=(\xi_1,|\xi_1|+\tilde{c_1}) \in S_1 \, , \, \sigma_2=(\xi_2,-|\xi_2|+\tilde{c_2}) \in S_2 \, , \, 
 \sigma_3=(\xi,\mp c|\xi|+\frac{c_0}{N_1} -h) \in S_3^{\pm,h} 
 $$
 and
$$ \sigma_1'=(\xi_1',|\xi_1'|+\tilde{c_1}) \in S_1 \, , \, \sigma_2'=(\xi_2',-|\xi_2'|+\tilde{c_2}) \in S_2 \, , \, 
\sigma_3'=(\xi',\mp c|\xi'|+\frac{c_0}{N_1} -h) \in S_3^{\pm,h} 
$$ 
We obtain by $S_1 \subset Q^{j_1'}_{kA}$ , $S_2 \subset Q^{j_2'}_{kA}$ , $ S_3^{\pm,h} \subset Q^{j'}_{kA}$ the estimates
\begin{align}
\label{2.7}
|n_1(\sigma_1) - n_1(\sigma_1')| &= 2^{-\half} \left|\frac{\xi_1}{|\xi_1|} -\frac{\xi_1'}{|\xi_1'|}  \right| \le \frac{1}{kA} \\
\label{2.8}
|n_2(\sigma_2) - n_2(\sigma_2')| &= 2^{-\half} \left|\frac{\xi_2}{|\xi_2|} -\frac{\xi_2'}{|\xi_2'|}  \right| \le \frac{1}{kA} \\
\label{2.9}
|n_3(\sigma_3) - n_3(\sigma_3')| &= \frac{c}{\sqrt{1+c^2}} \left|\frac{\xi}{|\xi|} -\frac{\xi'}{|\xi'|}  \right| \le \frac{1}{kA} \, .
\end{align}
Moreover by $|\xi_1| \le 2$ we obtain
\begin{align}
\nonumber
|\langle \sigma_1-\sigma_1',n_1(\sigma_1') \rangle| & \nonumber\le |\langle(\xi_1-\xi_1',|\xi_1|-|\xi_1'|),(-\frac{\xi_1'}{|\xi_1'|},1) \rangle| \\  \nonumber
& = |(-\frac{\langle \xi_1,\xi_1'\rangle}{|\xi_1'|} + |\xi_1'|) + (|\xi_1|-|\xi_1'|)| \\
\nonumber
& = |\xi_1| |\cos \angle(\xi_1,\xi_1') -1| \\
\label{2.7'}
& \le 2 \angle(\xi_1,\xi_1')^2 \le \frac{2}{k^2 A^2}
\end{align}
and similarly  by $|\xi_2| \le 2^5 (1-c)^{-1}$
$$|\langle \sigma_2-\sigma_2',n_2(\sigma_2') \rangle| \le\frac{2^5}{(1-c)k^2 A^2}$$
and
$$|\langle \sigma_3-\sigma_3',n_3(\sigma_3') \rangle| \le\frac{2^6}{(1-c)k^2 A^2}$$
Next we consider the normal $n_3'(\sigma_3) := n_{S_3'}(\sigma_3)$ .
By definition we obtain
$$ n_3'(\sigma_3) = \frac{(v,0)\mp \frac{c^2}{1+c^2} \langle \frac{\xi}{|\xi|},v\rangle (\frac{\xi}{|\xi|},1)}{|(v,0)\mp \frac{c^2}{1+c_2} \langle \frac{\xi}{|\xi|},v\rangle (\frac{\xi}{|\xi|},1)|} =: \frac{a}{|a|} $$								
and similarly $n_3'(\sigma_3') = \frac{b}{|b|}$ with analogously defined $b$. This implies
\begin{equation}
\label{2.9'}|n_3'(\sigma_3')-n_3'(\sigma_3)|  \le \frac{|a-b|}{|a|} + |b| \left|\frac{1}{|a|}-\frac{1}{|b|}\right| \le \frac{2|a-b|}{|a|}
\end{equation}								
By (\ref{a}) we obtain $\half \le |a| \lesssim 1$ and similarly $\half \le |b| \lesssim 1$ . Moreover
\begin{align*}
|a-b|  &\le \frac{c^2}{1+c^2}\left(|\langle \frac{\xi}{|\xi|},v\rangle -\langle \frac{\xi'}{|\xi'|},v \rangle| 
\, |\frac{\xi}{|\xi|}| +
|\langle \frac{\xi'}{|\xi'|},v \rangle|
 |\frac{\xi}{|\xi|} - \frac{\xi'}{|\xi'|}|
  +|\langle \frac{\xi}{|\xi|} - \frac{\xi'}{|\xi'|},v \rangle|\right) \\
  & \le \frac{3}{kA}
\end{align*}
and thus
\begin{align}
\label{2.10}
|n_3'(\sigma_3')-n_3'(\sigma_3)|  \le \frac{12}{kA}
\end{align}  
Next we want to prove
\begin{equation}
\label{2.8*}|\langle \sigma_3-\sigma_3',n_3'(\sigma_3')\rangle| \le \frac{2^7}{(1-c) k^2A^2} \, .
\end{equation}
We recall that by (\ref{a}) we have $|n_{S_3^{\tilde{d}}} - \langle n_{S_3},n_{S_3}^{\tilde{d}} \rangle n_{S_3}| \ge \half$ so that by the definition of $n_3'$ we obtain
\begin{align*}
&|\langle \sigma_3-\sigma_3',n_3'(\sigma_3')\rangle| \\& \le 2 |\langle(\xi-\xi',\mp c(|\xi|-|\xi'|)),(v-\frac{c^2}{1+c^2} \langle \frac{\xi'}{|\xi'|},v \rangle \frac{\xi'}{|\xi'|},\mp \frac{c}{1+c^2}\langle \frac{\xi'}{|\xi'|},v\rangle) \rangle| \\
&\le 2 |  \langle \xi - \xi',v-\frac{c^2}{1+c^2} \langle\frac{\xi'}{|\xi'|},v \rangle \frac{\xi'}{|\xi'|}\rangle + \frac{c^2}{1+c^2} \langle \frac{\xi'}{|\xi'|},v \rangle (|\xi|-|\xi'|)| \\
&\le 2 | \langle \xi - \xi',v  \rangle -\frac{c^2}{1+c^2} \langle\frac{\xi'}{|\xi'|},v \rangle
(\frac{\langle \xi, \xi' \rangle}{|\xi'|} - |\xi'|)
 + \frac{c^2}{1+c^2} \langle \frac{\xi'}{|\xi'|},v \rangle (|\xi|-|\xi'|)| \\
&\le 2 | \langle \xi - \xi',v  \rangle -\frac{c^2}{1+c^2} \langle\frac{\xi'}{|\xi'|},v \rangle
(\frac{\langle \xi, \xi' \rangle}{|\xi'|} - |\xi|)| \, . 
\end{align*}
The first term vanishes, because $\langle \xi,v\rangle = \langle \xi',v\rangle = \tilde{d}$ .
The second term is bounded by 
$$|\frac{\langle \xi, \xi' \rangle}{|\xi'|} - |\xi|| = |\xi| \,|\cos \angle(\xi,\xi') -1| \le 2^6 (1-c)^{-1} \angle(\xi,\xi')^2 \le \frac{2^6}{(1-c)k^2A^2} \,  $$
where we used $|\xi| \le 2^6(1-c)^{-1}$ .
This implies (\ref{2.8*}).

Next we want to show that we may assume that
\begin{equation}
\label{2.8'}
 |\langle \sigma_i - \sigma_i',n_j(\sigma_j')\rangle| \le \frac{2^7}{(1-c)k^2 A^2} \end{equation}
 and
 \begin{equation}
 \label{2.8''}
|\langle \sigma_i-\sigma_i',n_3'(\sigma_3')\rangle| \le \frac{2^8}{(1-c) k^2 A^2}  
\end{equation}
for $1\le i,j \le 3$ .
We only prove the first estimate, because the last one is treated similarly. Let $\delta := \frac{2^6}{(1-c) k^2 A^2}$ and $T_0(n) := \{ x \in \mathbb{R}^4 , |\langle n,x \rangle| \le \delta \}$, $T_{k'}(n) := T_0(n) + k' \delta n$ for $k' \in \mathbb{Z}$ , where $n$ is a unit normal on $\mathcal{S}^3$. Obviously $\cup_{k' \in\mathbb{Z}} T_{k'}(n) = \mathbb{R}^4$ and
$T_m(n)-T_{k'}(n) \subset \cup_{|l-(m-k')| \le 5} T_l(n)$ . Let $\sigma_1,\sigma_1' \in S_1$ , so that $\sigma_1 \in T_{k_1}(n_1(\sigma_1'))$, thus $|\langle \sigma_1,n_1(\sigma_1')\rangle-k_1 \delta| \le \delta$ . Using (\ref{2.7'}) this implies $$|\langle \sigma_1',n_1(\sigma_1')\rangle-k_1 \delta| \le |\langle \sigma_1'-\sigma_1,n_1(\sigma_1')\rangle| + |\langle \sigma_1,n_1(\sigma_1')\rangle -k_1 \delta| \le 2\delta \, ,$$ 
so that $\sigma_1' \in T_{k_1}(n_1(\sigma_1')) \cup T_{k_1-1}(n_1(\sigma_1')) \cup T_{k_1+1}(n_1(\sigma_1'))$ , essentially $S_1 \subset T_{k_1}(n_1(\sigma_1'))$ . Assume that we can show our desired estimate (\ref{2.5}) provided that $S_2 \subset T_{l_1}(n_1(\sigma_1'))$ and $S_3^{\pm} \subset T_{m_1}(n_1(\sigma_1'))$ , which means that
$$|\langle \sigma_2-\sigma_2',n_1(\sigma_1')\rangle| \le |\langle \sigma_2-l_1\delta,n_1(\sigma_1')\rangle| + |\langle \sigma_2'-l_1\delta,n_1(\sigma_1')\rangle| \le 2 \delta $$
and similarly 
$$ |\langle\sigma_3-\sigma_3',n_1(\sigma_1')\rangle| \le 2\delta \, ,
$$
so that $\forall \, m_1,l_1 \in\mathbb{Z}$ :
\begin{align*}
\| \tilde{g_1}_{| S_1} \ast \tilde{g_2}_{| S_2 \cap T_{l_1}(n_1(\sigma_1'))}\|^2_{L^2(S_3^{\pm} \cap T_{m_1}(n_1(\sigma_1')))} \lesssim A \|\tilde{g_1}\|^2_{L^2(S_1)} \|\tilde{g_2}\|^2_{L^2(S_2 \cap T_{l_1}(n_1(\sigma_1'))} \, .
\end{align*}
This implies
\begin{align*}
&\|\tilde{g_1}_{S_1} \ast \tilde{g_2}_{| S_2} \|^2_{L^2(S_3^{\pm})} \\
&\lesssim \sum_{m_1 \in \mathbb{Z}} \|\tilde{g_1}_{| S_1 \cap T_{k_1}(n_1(\sigma_1'))} 
\ast \tilde{g_2}_{| S_2\cap \cup_{|l_1-(m_1-k_1)|\le 5}
 T_{l_1}(n_1(\sigma_1'))} \|^2_{L^2(S_3^{\pm} \cap T_{m_1}(n_1(\sigma_1')))} \\
& \lesssim A \|\tilde{g_1}\|^2_{L^2(S_1)} \sum_{m_1 \in \mathbb{Z}} \|\tilde{g_2}\|^2_{L^2(S_2 \cap T_{m_1-k_1}(n_1(\sigma_1')))} \\
& \lesssim A \|\tilde{g_1}\|^2_{L^2(S_1)} \|\tilde{g_2}\|^2_{L^2(S_2)} \, ,
 \end{align*}
which is our desired estimate (\ref{2.5}). This means that we may assume that the estimates
$|\langle \sigma_2-\sigma_2',n_1(\sigma_1')\rangle| \le 2\delta$ and $|\langle \sigma_3-\sigma_3',n_1(\sigma_1')\rangle| \le 2\delta$ apply.  
Similarly we may further assume that $|\langle \sigma_1-\sigma_1',n_2(\sigma_2)\rangle| \le 2 \delta$ and $|\langle \sigma_3-\sigma_3',n_2(\sigma_2)\rangle| \le 2 \delta$ . Finally we obtain (\ref{2.8'}) and (\ref{2.8''}).

Let the invertible linear transformation $T: \mathbb{R}^4 \to \mathbb{R}^4$ be given by
$$ T= 2^{-40}(1-c)^3 A^{-2} (N^t)^{-1} \, , $$
where $N=(n_1(\sigma_1'),n_2(\sigma_2'),n_3(\sigma_3'),n_3'(\sigma_3'))$ , and $\tilde{S_j} := T^{-1} S_j$ . We want to apply \cite{BH}, Theorem 1.3 for these manifolds $\tilde{S_j}$ . We have to prove that the assumptions are fulfilled (cf. Assumption 1.1 in \cite{BH}). We have to prove:
\begin{enumerate}
	\item [(I)] $diam(\tilde{S_j}) \le 1$ ,
	\item [(II)]$\half \le \det(\tilde{n_1}(\tilde{\sigma_1}),\tilde{n_2}(\tilde{\sigma_2}),\tilde{n_3}(\tilde{\sigma_3}),\tilde{n_3'}(\tilde{\sigma_3})) \le 1$ $\forall \, \tilde{\sigma_j} \in \tilde{S_j}$ ,
	\item [(III)]  \begin{align*}
	\sup_{\tilde{\sigma_j},\tilde{\sigma_j^0}\in \tilde{S_j}}
	\frac{|\tilde{n_j}(\tilde{\sigma_j}) - \tilde{n_j}(\tilde{\sigma_j^0})|}{|\tilde{\sigma_j} - \tilde{\sigma_j^0}|} &
	+ \sup_{\tilde{\sigma_3},\tilde{\sigma_3^0}\in \tilde{S_3}}
	\frac{|\tilde{n_3'}(\tilde{\sigma_3}) - \tilde{n_3'}(\tilde{\sigma_3^0})|}{|\tilde{\sigma_3} - \tilde{\sigma_3^0}|} \\
	&+ \sup_{\tilde{\sigma_j} \in \tilde{S_j}} |\tilde{n_j}(\tilde{\sigma_j})| + \sup_{\tilde{\sigma_3} \in \tilde{S_3}} |\tilde{n_3'}(\tilde{\sigma_j})|  \le 1
		\end{align*}
\end{enumerate}	
Proof of (I). For arbitrary $\sigma_j,\sigma_j' \in S_j$ we obtain by (\ref{2.8'}),(\ref{2.8''}) and recalling $ k = 2^{40} (1-c)^{-2} $ :
\begin{align*}
|T^{-1}(\sigma_j-\sigma_j')| & = 2^{40} (1-c)^{-3} A^2 |(\langle n_1(\sigma_1'),\sigma_j-\sigma_j'\rangle, ... , \langle n_3'(\sigma_3'),\sigma_j-\sigma_j' \rangle)| \\& \le 2^{40} (1-c)^{-3} A^2 2^9 (1-c)^{-1} k^{-2}A^{-2}  \le 2^{-31} < 1 \,.
\end{align*}
Proof of (II). The unit normals on $\tilde{S_j}$ are given by
\begin{equation}
\label{*}
\tilde{n_j}(\tilde{\sigma_j}) = \frac{T^t n_j(T \tilde{\sigma_j})}{|T^t n_j(T \tilde{\sigma_j})|} = \frac{N^{-1} n_j(T \tilde{\sigma_j})}{|N^{-1} n_j(T\tilde{\sigma_j})|} \quad , \quad \tilde{n_3'}(\tilde{\sigma_3}) = \frac{N^{-1} n_3'(T \tilde{\sigma_3})}{|N^{-1} n_3'(T \tilde{\sigma_3})|} \, .
\end{equation}
This implies
\begin{equation}
\label{2.11}\tilde{n_j}(T^{-1} \sigma_j') = N^{-1} n_j(\sigma_j') = e_j \quad , \quad \tilde{n_3'}(T^{-1} \sigma_3') = N^{-1} n_3'(\sigma_3') = e_4 \, ,
\end{equation} 
Using (\ref{2.6}) we obtain
\begin{equation}
\label{2.12}
\|N^{-1}\| = \|(N^t)^{-1}\| \le \frac{\|N^t\|^3}{|\det N^t|} \le 2^{11} (1-c)^{-1} A \, .
\end{equation}
By the definition of $T$ this implies
\begin{equation}
\label{2.12'}
\|T\| \le 2^{-40} (1-c)^3 A^{-2} (1-c)^{-1} 2^{11} A \le 2^{-29}(1-c)^2 A^{-1} \, .
\end{equation}
Consequently by (\ref{2.11}),(\ref{2.12}),(\ref{2.7}),(\ref{2.8}),(\ref{2.9}),(\ref{2.10}) we obtain
\begin{align}
\label{2.13}
|N^{-1} n_j(\sigma_j) - e_j| &= |N^{-1}(n_j(\sigma_j)-n_j(\sigma_j'))| \le \|N^{-1}\| |n_j(\sigma_j) - n_j(\sigma_j')| \\
& \le 2^{11} (1-c)^{-1} A (kA)^{-1} = 2^{-29} (1-c) \le 2^{-29} \, , \\
|N^{-1} n_3'(\sigma_3) - e_4| & \le 2^{11} (1-c)^{-1} A \,12\, (kA)^{-1} \le 2^{-25} \, ,
\end{align}
which implies
$$||N^{-1} n_j(\sigma_j)| - 1| \le 2^{-29} \quad , \quad ||N^{-1}n_3'(\sigma_3)|-1| \le 2^{-25} \, , $$
By (\ref{*}) and (\ref{2.13}) we obtain
\begin{align}
\nonumber
|\tilde{n_j}(\tilde{\sigma_j}) - e_j| &= \left|\frac{N^{-1} n_j(T\tilde{\sigma_j})}{|N^{-1} n_j(T\tilde{\sigma_j})|} - e_j \right| \\
\nonumber
&\le \left|\frac{N^{-1} n_j(T\tilde{\sigma_j})}{|N^{-1} n_j(T\tilde{\sigma_j})|} - \frac{e_j}{|N^{-1} n_j(T\tilde{\sigma_j}|}\right| + \left|(\frac{1}{|N^{-1} n_j(T\tilde{\sigma_j}|} -1)e_j\right| \\
\label{2.14}
&\le 2\frac{2^{-29}}{1-2^{-29}} \le 2^{-27}
\end{align}
and similarly
\begin{equation}
\label{2.15}
|\tilde{n_3}'(\tilde{\sigma_3})-e_4| \le 2^{-23} \, .
\end{equation}
(\ref{2.14}) and (\ref{2.15}) immediately imply (II). \\
Proof of (III). Let $\tilde{\sigma_i} , \tilde{\sigma_i}^0 \in \tilde{S_i}$ . We obtain by (\ref{2.13}):
\begin{align*}
&|\tilde{n_i}(\tilde{\sigma_i})-\tilde{n_i}(\tilde{\sigma_i}^0)| \\
& = \left|\frac{N^{-1}n_i(T\tilde{\sigma_i})}{|N^{-1}n_i(T\tilde{\sigma_i})|} - \frac{N^{-1}n_i(T\tilde{\sigma_i}^0)}{|N^{-1}n_i(T\tilde{\sigma_i}^0)|}\right| \\
&\le |N^{-1} n_i(T\tilde{\sigma_i}) - N^{-1} n_i(T\tilde{\sigma_i}^0)| (1 +  |\frac{1}{|N^{-1} n_i(T\tilde{\sigma_i})|} -1|) \\
& + |N^{-1} n_i(T \tilde{\sigma_i}^0) (\frac{1}{|N^{-1} n_i(T \tilde{\sigma_i}|)} -\frac{1}{|N^{-1} n_i(T \tilde{\sigma_i}^0|)})| \\
&\le  |N^{-1} n_i(T\tilde{\sigma_i}) - N^{-1} n_i(T\tilde{\sigma_i}^0)|(1+ \frac{|1-|N^{-1}n_i(T\tilde{\sigma_i})||}{|N^{-1}n_i(T\tilde{\sigma_i})|} + \frac{1}{|N^{-1} n_i(T\tilde{\sigma_i})|}) \\
& \le  |N^{-1} n_i(T\tilde{\sigma_i}) - N^{-1} n_i(T\tilde{\sigma_i}^0)| (1+ \frac{2^{-9}}{1-2^{-9}}+\frac{1}{1-2^{-9}}) \\
& \le 3 |N^{-1} n_i(T\tilde{\sigma_i}) - N^{-1} n_i(T\tilde{\sigma_i}^0)|\, .
\end{align*}
This implies by (\ref{2.12}),(\ref{2.12'}):
\begin{align}
\nonumber
\frac{|\tilde{n_i}(\tilde{\sigma_i})-\tilde{n_i}(\tilde{\sigma_i}^0)|}{|\tilde{\sigma_i}-\tilde{\sigma_i}^0|} & \le 3 \|N^{-1}\|\frac{|n_i(T\tilde{\sigma_i})| - n_i(T\tilde{\sigma_i}^0)|}{|\tilde{\sigma_i}-\tilde{\sigma_i}^0|}\\
\nonumber
& \le 3 \|N^{-1}\| \|T\| \frac{|n_i(T\tilde{\sigma_i})| - n_i(T\tilde{\sigma_i}^0)|}{|T\tilde{\sigma_i}-T\tilde{\sigma_i}^0|}\\
\label{2.16}
& \le 2^{-18} (1-c) \frac{|n_i(\sigma_i)-n_i(\sigma_i^0)|}{|\sigma_i-\sigma_i^0|} \, .
\end{align}
Now we obtain by $|\xi_i|\ge (1-c)2^{-4}$ for $i=1,2,3$ :
$$|n_i(\sigma_i)-n_i(\sigma_i^0)| \le \left| \frac{\xi_i}{|\xi_i|} - \frac{\xi_1^0}{|\xi_i^0|} \right| \le 2 \frac{|\xi_i-\xi_i^0|}{|\xi_i|} \le 2^5 (1-c)^{-1} |\xi_i-\xi_i^0| \, .$$
By (\ref{2.16}) this implies
$$ \frac{|\tilde{n_i}(\tilde{\sigma_i})-\tilde{n_i}(\tilde{\sigma_i}^0)|}{|\tilde{\sigma_i}-\tilde{\sigma_i}^0|} \le 2^{-13}  \le  1 \, . $$
It remains to estimate $|\tilde{n_3}'(\tilde{\sigma_3}) -\tilde{n_3}'(\tilde{\sigma_3}^0)|$ . By (\ref{2.9'}) we obtain (with the notations used there)
$$
|n_3'(\sigma_3) -n_3'(\sigma_3^0)|  \le \frac{2|a-b|}{|a|} \, . $$ 
The numerator is 
$$\frac{2c}{1+c^2}|\langle \frac{\xi_3}{|\xi_3|},v \rangle (c \frac{\xi_3}{|\xi_3|},1) -\langle \frac{\xi_3^0}{|\xi_3^0|},v \rangle (c \frac{\xi_3^0}{|\xi_3^0|},1)| \le 6 \left| \frac{\xi_3}{|\xi_3|} -\frac{\xi_3^0}{|\xi_3^0|} \right| \, .$$ Because the denominator fulfills $|a| \ge \half$ we obtain
$$|n_3'(\sigma_3) -n_3'(\sigma_3^0)| \le 12 \left| \frac{\xi_3}{|\xi_3|} -\frac{\xi_3^0}{|\xi_3^0|} \right| \le 24 \frac{|\xi_3-\xi_3^0|}{|\xi_3|} \le 48 |\xi_3-\xi_3^0| $$
using $|\xi_3| \ge \half$ . 
Moreover we have 
$$ |\sigma_3-\sigma_3^0| \ge |\xi_3-\xi_3^0|$$
and gain
$$ \frac{|n_3'(\sigma_3) -n_3'(\sigma_3^0)|}{|\sigma_3-\sigma_3^0|} \le 48$$
and finally similarly as in (\ref{2.16}) the desired estimate
$$\frac{|\tilde{n_3}'(\tilde{\sigma_3}) - \tilde{n_3}'(\tilde{\sigma_3}^0)|}{|\tilde{\sigma_3}-\tilde{\sigma_3}^0|} \le 1 \, . $$
Thus the manifolds $\tilde{S_1},\tilde{S_2},\tilde{S_3}$ fulfill the assumption 1.1 of \cite{BH} with parameters $R = 1$ , $b = 1$ , $\Theta = \half$ and $\beta=1$ , so that we obtain by \cite{BH}, Theorem 1.3:
$$ \| f_{| \tilde{S_1}} \ast g_{| \tilde{S_2}} \|_{L^2(\tilde{S_3})} \lesssim \|f\|_{L^2(\tilde{S_1})} \|g\|_{L^2(\tilde{S_2})} \, . $$
By application of the linear invertible mapping $T$ we obtain by \cite{BH}, Proposition 1.2 the estimate (\ref{2.5}), namely
$$ \|f_{|S_1} \ast g_{|S_2}\|_{L^2(S_{3,\tilde{d}}^{\pm})} \lesssim d^{-\half} \|f\|_{L^2(S_1)} \|g\|_{L^(S_2)} \lesssim A^{\half} \|f\|_{L^2(S_1)} \|g\|_{L^2(S_2)} \, , $$
where $d = |\det(n_{S_1},n_{S_2},n_{S_3},n_{S_3}')| \ge (1-c) 2^{-4} A^{-1}$  by (\ref{2.6}). As proven before this implies the claimed estimate and completes the proof.
\end{proof}

Next we have to consider the case of a relatively small angle $\angle(\xi_1,\xi_2)$ .
\begin{prop}
	\label{Prop.2.2}
	Let $f,g_1,g_2 \in L^2$ and $supp \, f \subset K^{\pm,c}_{N_0,L_0}$ , $supp \, g_1 \subset Q^{j_1}_A \cap K^-_{N_1,L_1}$, $supp \, g_2 \subset Q^{j_2}_A \cap K^+_{N_2,L_2}$  and $1 < N_0 \lesssim N_1 \sim N_2$ , $A \gtrsim N_1^{\frac{3}{2}}$ .
	 Then the following estimate applies:
	$$|I(f,g_1,g_2)| \lesssim \min(L_0,L_1,L_2)^{\half} \|f\|_{L^2} \|g_1\|_{L^2} \|g_2\|_{L^2}  \, , $$
	where
	\begin{equation}
	\label{I}
	 I:= \int f(\xi_1+\xi_2,\tau_1+\tau_2) g_1(\xi_1,\tau_1) g_2(\xi_2,\tau_2) d\xi_1 d\tau_1 d\xi_2 d\tau_2  \, .
	 \end{equation}
\end{prop}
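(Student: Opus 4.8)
The plan is to prove Proposition~\ref{Prop.2.2} by the standard $L^2$ bilinear argument: view $I$ as a weighted convolution of three $L^2$ densities supported on thin neighbourhoods of characteristic cones, peel off one factor by Cauchy--Schwarz, and estimate the remaining convolution by the square root of the measure of the appearing resonance set. The single feature of the hypotheses that matters is that $g_1$ and $g_2$ are localized to angular caps $Q^{j_1}_A, Q^{j_2}_A$ of angular width $\lesssim A^{-1}$ with $A \gtrsim N_1^{3/2}$: consequently a ball of radius $\lesssim N_1$ intersected with such a cap has measure $\lesssim N_1\cdot(N_1 A^{-1})^2 = N_1^3 A^{-2} \lesssim 1$. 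Thus in every resonance count the spatial frequency variable is confined to a set of measure $\lesssim 1$, and only the three modulation weights survive; since each such set feels exactly two of the three weights, choosing the right factor to freeze leaves $\min(L_0,L_1,L_2)$.

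Concretely I would split into two cases. If $L_0 \le \min(L_1,L_2)$, I freeze $g_1$: writing $I = \int g_1(\xi_1,\tau_1)\, h(\xi_1,\tau_1)\, d\xi_1 d\tau_1$ with $h(\xi_1,\tau_1) = \int f(\xi_1+\xi_2,\tau_1+\tau_2)\, g_2(\xi_2,\tau_2)\, d\xi_2 d\tau_2$, one has $|I| \le \|g_1\|_{L^2}\|h\|_{L^2}$; applying Cauchy--Schwarz once more to the integral defining $h$, restricted to the resonance set $R(\xi_1,\tau_1) = \{(\xi_2,\tau_2) : (\xi_2,\tau_2)\in supp\, g_2,\ (\xi_1+\xi_2,\tau_1+\tau_2)\in supp\, f\}$, and then using Fubini together with the measure-preserving substitution $(\xi_1,\xi_2,\tau_1,\tau_2)\mapsto(\xi_1,\xi_1+\xi_2,\tau_1,\tau_1+\tau_2)$, one obtains $|I| \le \big(\sup_{\xi_1,\tau_1}|R(\xi_1,\tau_1)|\big)^{\half}\|f\|_{L^2}\|g_1\|_{L^2}\|g_2\|_{L^2}$. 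If instead $L_0 \ge \min(L_1,L_2)$, I freeze $f$ and run the identical two-step Cauchy--Schwarz with the resonance set $\widetilde{R}(\xi,\tau) = \{(\xi_1,\tau_1) : (\xi_1,\tau_1)\in supp\, g_1,\ (\xi-\xi_1,\tau-\tau_1)\in supp\, g_2\}$, arriving at the analogous bound with $\sup_{\xi,\tau}|\widetilde{R}(\xi,\tau)|$.

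It then remains only to count the resonance sets. In the first case, for fixed $(\xi_1,\tau_1)$ the variable $\xi_2$ runs over $\{|\xi_2|\sim N_1\}\cap Q^{j_2}_A$, of measure $\lesssim N_1^3 A^{-2} \lesssim 1$; and for each such $\xi_2$ the coordinate $\tau_2$ is pinned simultaneously by $\langle\tau_2 + |\xi_2|\rangle \sim L_2$ (from $supp\, g_2$) and by $\langle\tau_1+\tau_2 \pm c|\xi_1+\xi_2|\rangle \sim L_0$ (from $supp\, f$), hence lies in a set of measure $\lesssim \min(L_0,L_2) = L_0$. Thus $|R(\xi_1,\tau_1)| \lesssim L_0 = \min(L_0,L_1,L_2)$. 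In the second case $\xi_1$ runs over $\{|\xi_1|\sim N_1\}\cap Q^{j_1}_A$, again of measure $\lesssim 1$, and, for fixed $\xi_1$, $\tau_1$ is pinned by $\langle\tau_1 - |\xi_1|\rangle \sim L_1$ and $\langle\tau - \tau_1 + |\xi-\xi_1|\rangle \sim L_2$, so it lies in a set of measure $\lesssim \min(L_1,L_2) = \min(L_0,L_1,L_2)$. Either way $\sup|R| \lesssim \min(L_0,L_1,L_2)$, which yields the claim. The main point, essentially the only one, is forcing the intersection of the ball with the cap to have measure $\lesssim 1$, which is exactly where $A \gtrsim N_1^{3/2}$ enters, and precisely at the threshold; the rest is a routine double application of Cauchy--Schwarz, and I note that the precise relation between the caps $j_1$ and $j_2$ is never used.
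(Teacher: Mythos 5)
Your proposal is correct and follows essentially the same route as the paper: a double Cauchy--Schwarz reducing the trilinear form to the measure of a resonance set, with the angular localization giving spatial measure $\lesssim N_1^3A^{-2}\lesssim 1$ (exactly where $A\gtrsim N_1^{3/2}$ enters) and the two modulation constraints pinning the time-frequency variable to a set of measure $\lesssim\min$ of two of the $L$'s. The only difference is cosmetic: the paper fixes $L_1\le L_2$ and dualizes against $g_2$, while you split according to whether $L_0$ is the smallest modulation and freeze $g_1$ or $f$ accordingly; both case splits yield $\min(L_0,L_1,L_2)^{1/2}$.
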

\begin{proof} Assume $L_1 \le L_2$ . Then the claimed estimate reduces to
	$$\|P_{K^+_{N_2,L_2}}((P_{K^{\pm,c}_{N_0,L_0}} f)(P_{K^-_{N_1,L_1}} g))\|_{L^2_{xt}} \lesssim \min(L_0, L_1)^{\half} \|f\|_{L^2_{xt}} \|g\|_{L^2_{xt}} \, .$$
The left hand side equals
\begin{align*}
&\| \chi_{K^+_{N_2,L_2}} (( \chi_{K^{\pm,c}_{N_0,L_0}} \widehat{f})\ast(\chi_{K^-_{N_1,L_1}} \widehat{g}))\|_{L^2_{\xi \tau}} \\& = \| \chi_{K^+_{N_2,L_2}}(\tau,\xi) \int (\chi_{K^{\pm,c}_{N_0,L_0}} \widehat{f})(\tau-\tau_1,\xi-\xi_1) (\chi_{K^-_{N_1,L_1}} \widehat{g})(\tau_1,\xi_1) d\tau_1 d\xi_1 \|_{L^2_{\xi\tau}}  \\
&\le \|\chi_{K^+_{N_2,L_2}}\left(\int |\widehat{f}|^2(\tau-\tau_1,\xi-\xi_1) |\widehat{g}|^2(\tau_1,\xi_1) d\tau_1 d\xi_1 \right)^{\half} |E(\tau,\xi)|^{\half}\|_{L^2_{\xi \tau}}\\
& \le \sup_{(\xi,\tau) \in K^+_{N_2,L_2}} |E(\tau,\xi)|^{\half} \| |\widehat{f}|^2 \ast |\widehat{g}|^2\|^{\half}_{L^1_{\xi \tau}} \\
& \le \sup_{(\xi,\tau) \in K^+_{N_2,L_2}} |E(\tau,\xi)|^{\half} \|f\|_{L^2_{xt}} \|g\|_{L^2_{xt}} \, ,
\end{align*}
where 
$$ E(\tau,\xi) := \{ (\tau_1,\xi_1) \in Q^{j_1}_A : \langle \tau-\tau_1\pm c|\xi-\xi_1| \rangle \sim L_0 , \langle \tau_1-|\xi_1| \rangle \sim L_1\} \, . $$
We have to prove that
$$ \sup_{(\xi,\tau) \in K^+_{N_2,L_2}} |E(\tau,\xi)| \lesssim \min(L_0, L_1) \, . $$
It is immediately clear that for fixed $\xi_1$ we obtain
$$ | \{\tau_1 : (\tau_1,\xi_1) \in E(\tau,\xi) \}| \lesssim \min (L_0,L_1) \, . $$
From $(\xi_1,\tau_1) \in Q^{j_1}_A \cap K^-_{N_1,L_1} $ we obtain :
$$ |\{\xi_1 : (\tau_1,\xi_1) \in E(\tau,\xi) \}| \lesssim N_1^3 A^{-2} \lesssim 1  $$
by our assumption $A \gtrsim N_1^{\frac{3}{2}}$ ,
so that
$$ \sup_{(\xi,\tau) \in K^+_{N_2,L_2}} |E(\tau,\xi)| \lesssim \min(L_0,L_1)  \, . $$
\end{proof}
\begin{Cor}
	\label{Cor.2.1}
	Let $f,g_1,g_2 \in L^2$ and assume $0 \le \angle(\xi_1,\xi_2) \le \frac{\pi}{2}$ ,  $supp \, f \subset K^{\pm,c}_{N_0,L_0}$ , $supp \, g_1 \subset  K^-_{N_1,L_1}$ , $supp \, g_2 \subset  K^+_{N_2,L_2}$  and $1 \le N_0 \lesssim N_1 \sim N_2$ , more precisely $N_1 \le 2^4(1-c)^{-1} N_2$ and  $N_2 \le 2^4(1-c)^{-1} N_1$. Then the following estimate applies:
	$$|I(f,g_1,g_2)| \lesssim N_1^{\half+} (L_0,L_1 L_2)^{\half} \|f\|_{L^2} \|g_1\|_{L^2} \|g_2\|_{L^2} \, ,$$
	where $I$ is given by (\ref{I}).
\end{Cor}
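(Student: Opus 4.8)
The plan is to deduce the estimate from Propositions~\ref{Prop.2.1} and~\ref{Prop.2.2} by a dyadic (Whitney-type) decomposition in the angle $\angle(\xi_1,\xi_2)$, bounding each angular piece by one of those two propositions and then summing; the summation costs only the admissible factor $\log N_1$. As a preliminary remark, $\angle(\xi_1,\xi_2)\le\frac{\pi}{2}$ gives $\cos\angle(\xi_1,\xi_2)\ge0$, hence $|\xi|^2=|\xi_1|^2+|\xi_2|^2+2|\xi_1||\xi_2|\cos\angle(\xi_1,\xi_2)\ge\max(|\xi_1|,|\xi_2|)^2$, so $N_0\gtrsim N_1$; together with $N_0\lesssim N_1\sim N_2$ this forces $N_0\sim N_1\sim N_2$. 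If this common size is $\lesssim1$ the estimate is elementary: placing the Fourier support of $g_1$, which has measure $\lesssim N_1^3L_1\lesssim L_1$, into $L^1$ one gets $|I|\lesssim L_1^{\half}\|f\|_{L^2}\|g_1\|_{L^2}\|g_2\|_{L^2}\le N_1^{\half+}(L_0L_1L_2)^{\half}\|f\|_{L^2}\|g_1\|_{L^2}\|g_2\|_{L^2}$ (using $N_1,L_0,L_2\ge1$); so we may assume $1\ll N_0\sim N_1\sim N_2$, the frequency regime required in Propositions~\ref{Prop.2.1} and~\ref{Prop.2.2}.

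For each dyadic $A\gtrsim1$ fix a partition of unity $\{\rho^j_A\}_{j\in\Omega_A}$ subordinate to $\{\omega^j_A\}$ and write $g_i=\sum_{j_i\in\Omega_A}\rho^{j_i}_A g_i$. A standard Whitney decomposition with respect to $\angle(\xi_1,\xi_2)$ yields
$$I(f,g_1,g_2)=\sum_{A}\ \sum_{\substack{(j_1,j_2):\\ \alpha(j_1,j_2)\sim A^{-1}}}I\big(f,\rho^{j_1}_A g_1,\rho^{j_2}_A g_2\big)\,,$$
where $A$ runs over the dyadic values with $1\lesssim A\lesssim N_1^{\frac{3}{2}}$, together with the single terminal value $A_\ast\sim N_1^{\frac{3}{2}}$ whose block absorbs the entire region $\angle(\xi_1,\xi_2)\lesssim N_1^{-\frac{3}{2}}$ (the collinear case included); note that $f$ itself need not be decomposed, since both propositions require angular localization only of the last two factors. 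For each admissible $(j_1,j_2)$ the support conditions on $\rho^{j_1}_A g_1$, $\rho^{j_2}_A g_2$ and $f$ are exactly those of Propositions~\ref{Prop.2.1} and~\ref{Prop.2.2}.

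Fix a dyadic $A$ with $1\lesssim A\lesssim N_1^{\frac{3}{2}}$. Proposition~\ref{Prop.2.1} gives, uniformly in $A,j_1,j_2$,
$$\big|I(f,\rho^{j_1}_A g_1,\rho^{j_2}_A g_2)\big|\lesssim N_1^{\half}(L_0L_1L_2)^{\half}\,\|f\|_{L^2}\,\|\rho^{j_1}_A g_1\|_{L^2}\,\|\rho^{j_2}_A g_2\|_{L^2}\,;$$
here the hypothesis $A\ge8$ of Proposition~\ref{Prop.2.1} is only used to keep $\sin\alpha(j_1,j_2)$ comparable to $\alpha(j_1,j_2)$, and for $A\sim1$ one has instead $\sin\alpha(j_1,j_2)\gtrsim1$ (because $\alpha(j_1,j_2)\sim A^{-1}$ then lies in a compact subinterval of $(0,\pi)$), so the transversality bound $d\gtrsim(1-c)A^{-1}$ of~(\ref{2.6}) and the remaining steps are unaffected, and the argument is valid for every dyadic $A\gtrsim1$. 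Since for fixed $j_1$ there are only $O(1)$ indices $j_2$ with $\alpha(j_1,j_2)\sim A^{-1}$, the Cauchy--Schwarz inequality and the finite overlap of $\{\omega^j_A\}$ give
$$\sum_{\substack{(j_1,j_2):\\ \alpha(j_1,j_2)\sim A^{-1}}}\big|I(f,\rho^{j_1}_A g_1,\rho^{j_2}_A g_2)\big|\lesssim N_1^{\half}(L_0L_1L_2)^{\half}\,\|f\|_{L^2}\|g_1\|_{L^2}\|g_2\|_{L^2}\,,$$
again uniformly in $A$. For the terminal value $A_\ast$ the same argument applies with Proposition~\ref{Prop.2.2} in place of Proposition~\ref{Prop.2.1}, yielding the even stronger bound $\min(L_0,L_1,L_2)^{\half}\|f\|_{L^2}\|g_1\|_{L^2}\|g_2\|_{L^2}\lesssim(L_0L_1L_2)^{\half}\|f\|_{L^2}\|g_1\|_{L^2}\|g_2\|_{L^2}$. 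Since there are only $O(\log N_1)$ admissible scales $A$, summing over them gives
$$|I(f,g_1,g_2)|\lesssim(\log N_1)\,N_1^{\half}(L_0L_1L_2)^{\half}\|f\|_{L^2}\|g_1\|_{L^2}\|g_2\|_{L^2}\lesssim N_1^{\half+}(L_0L_1L_2)^{\half}\|f\|_{L^2}\|g_1\|_{L^2}\|g_2\|_{L^2}\,,$$
which is the assertion.

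The genuine analytic input is already contained in Propositions~\ref{Prop.2.1} and~\ref{Prop.2.2}, so what remains is essentially bookkeeping. The point I expect to require most care is that every angular scale contributes the \emph{same} bound $N_1^{\half}(L_0L_1L_2)^{\half}$ with no stray power of $A$ (this is precisely what makes the dyadic summation cost only $\log N_1$, absorbed into $N_1^{0+}$); this uses both the $A$-independence of the bound in Proposition~\ref{Prop.2.1} and the losslessness of the almost-orthogonality in $(j_1,j_2)$. One also has to check that the cutoff $A\sim N_1^{\frac{3}{2}}$ lies in the common range of validity of the two propositions, which it does since Proposition~\ref{Prop.2.1} applies for $A\lesssim N_1^{\frac{3}{2}}$ and Proposition~\ref{Prop.2.2} for $A\gtrsim N_1^{\frac{3}{2}}$.
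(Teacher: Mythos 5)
Your proof is correct and follows essentially the same route as the paper: a dyadic angular (Whitney-type) decomposition up to the cutoff $A\sim N_1^{\frac{3}{2}}$, Proposition \ref{Prop.2.1} together with Cauchy--Schwarz and almost orthogonality on each angular scale, Proposition \ref{Prop.2.2} for the terminal small-angle block, and summation over the $O(\log N_1)$ scales absorbed into the factor $N_1^{0+}$. Your additional remarks (that the hypotheses force $N_0\sim N_1\sim N_2$, and that the scales $A<8$ are harmless because the transversality is then $\gtrsim 1$) merely make explicit details the paper handles implicitly via H\"older for $N_0\lesssim 1$ and by starting the dyadic sum at $A=1$.
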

\begin{proof}
The case $1 \le N_0 \lesssim 1$ easily follows by H\"older's inequality. In the case $1 \ll N_0$
choose $M= N_1^{\frac{3}{2}}$ such that Prop. \ref{Prop.2.1} applies for $A < M$ and Prop. \ref{Prop.2.2} for $A=M$ . Then
\begin{align*}
|I(f,g_1,g_2)|  \lesssim &\sum_{A=1}^{M-1} \sum_{\alpha(j_1,j_2) \sim A^{-1}} |I(f, P_{Q^{j_1}_A} g_1,P_{Q^{j_2}_A g_2)})| \\
&+ \sum_{\alpha(j_1,j_2) \lesssim M^{-1}} |I(f,P_{Q^{j_1}_M} g_1,P_{Q^{j_2}_M} g_2)| \, , 
\end{align*}
The first term is handled by Prop. \ref{Prop.2.1} and Cauchy-Schwarz:
\begin{align*}
\sum_{\alpha(j_1,j_2) \sim A^{-1}} &|I(f,P_{ Q^{j_1}_A} g_1,P_{Q^{j_2}_A} g_2)|  \\
& \lesssim N_1^{\half} (L_0 L_1 L_2)^{\half} \sum_{\alpha(j_1,j_2) \sim A^{-1}} \|f\|_{L^2} \|P_{Q^{j_1}_A} g_1\|_{L^2} \|P_{Q^{j_2}_A} g_2\|_{L^2} \\
& \lesssim N_1^{\half} (L_0 L_1 L_2)^{\half} \|f\|_{L^2} \|g_1\|_{L^2} \|g_2\|_{L^2}
\end{align*}
for fixed $A$ . Dyadic summation with respect to $A$ implies the claimed estimate with an additional factor $N_1^{0+}$ . The second term is handled by Prop. \ref{Prop.2.2}.
\end{proof}

\begin{prop}
	Assume $0 \le \angle{\xi_1,\xi_2)} \le \frac{\pi}{2}$ . Let $f,g_1,g_2 \in L^2$ and assume  $supp \, f \subset K^{\pm,c}_{N_0,L_0}$ , $supp \, g_1 \subset  K^-_{N_1,L_1}$ , $supp \, g_2 \subset K^+_{N_2,L_2}$ and $1 \le N_0 \lesssim N_1 \sim N_2$ , more precisely  $N_1 \le 2^4(1-c)^{-1} N_2$ and  $N_2 \le 2^4(1-c)^{-1} N_1$ , $\max(L_0,L_1,L_2) \lesssim N_1$, $b=\half+$ and $-\half < s \le 0$ . Then the estimates (\ref{2.0}) and (\ref{2.0'}) are fulfilled.
\end{prop}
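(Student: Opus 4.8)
The plan is to pass to the dyadic pieces and reduce the trilinear integral to Corollary \ref{Cor.2.1}. The decisive geometric observation is that $0 \le \angle(\xi_1,\xi_2) \le \frac{\pi}{2}$ gives $\xi_1\cdot\xi_2 \ge 0$, hence
$$ |\xi|^2 = |\xi_1 + \xi_2|^2 = |\xi_1|^2 + 2\,\xi_1\cdot\xi_2 + |\xi_2|^2 \ge \max(|\xi_1|,|\xi_2|)^2 \, , $$
so that $|\xi| \gtrsim N_1$; combined with the standing assumption $N_0 \lesssim N_1 \sim N_2$ this forces $N_0 \sim N_1 \sim N_2 =: N$ as soon as $N_1 \gg 1$ (the case $N_1 \lesssim 1$, in which all frequencies are $\sim 1$, being trivial by H\"older's inequality). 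Thus we may assume $N_0 \sim N_1 \sim N_2 = N$ throughout.

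For fixed dyadic $N,L_0,L_1,L_2$ with $\max(L_0,L_1,L_2) \lesssim N$, Corollary \ref{Cor.2.1}, applied to the pieces $P_{K^{\pm,c}_{N_0,L_0}} f$, $P_{K^-_{N_1,L_1}} g_1$, $P_{K^+_{N_2,L_2}} g_2$, yields
$$ N_1^{-1}\Big|\int (P_{K^{\pm,c}_{N_0,L_0}} f)(P_{K^-_{N_1,L_1}} g_1)(P_{K^+_{N_2,L_2}} g_2)\,dx\,dt\Big| \lesssim N^{-\half+}(L_0 L_1 L_2)^{\half}\,\|P_{K^{\pm,c}_{N_0,L_0}} f\|_{L^2}\|P_{K^-_{N_1,L_1}} g_1\|_{L^2}\|P_{K^+_{N_2,L_2}} g_2\|_{L^2} \, . $$
Now insert the weights defining the Bourgain norms. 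For (\ref{2.0}) write $\|P_{K^{\pm,c}_{N_0,L_0}} f\|_{L^2} = N_0^{-s}L_0^{-b}a_{N_0,L_0}$, $\|P_{K^-_{N_1,L_1}} g_1\|_{L^2} = N_1^{-s}L_1^{-b}b_{N_1,L_1}$, $\|P_{K^+_{N_2,L_2}} g_2\|_{L^2} = N_2^{s}L_2^{-(1-b-)}c_{N_2,L_2}$, with $\sum a^2 = \|f\|_{X^{s,b}_{\pm,c}}^2$, $\sum b^2 = \|g_1\|_{X^{s,b}_-}^2$, $\sum c^2 = \|g_2\|_{X^{-s,1-b-}_+}^2$. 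Since $N_0 \sim N_1 \sim N_2 = N$, the $N$-powers — $N_1^{-1}$, $N_1^{\half+}$ (from Corollary \ref{Cor.2.1}), and $N_0^{-s}N_1^{-s}N_2^{s}$ — collapse to $N^{-\half-s+}$, while the $L$-powers are $L_0^{\half-b}L_1^{\half-b}$ (both summable, as $b>\half$) times $L_2^{\half-(1-b-)} = L_2^{0+}$, which runs only over $L_2 \lesssim N$. The estimate (\ref{2.0'}) is the same with the roles permuted: $f$ carries the weight $N_0^{s}L_0^{-(1-b-)}$ and $g_1,g_2$ carry $N_j^{-s}L_j^{-b}$, the $N$-powers $N_0 N_1^{-1}N_2^{-1}\cdot N_1^{\half+}\cdot N_0^{s}N_1^{-s}N_2^{-s}$ again collapse to $N^{-\half-s+}$, and the unique non-summable factor is $L_0^{0+}$ with $L_0 \lesssim N$.

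It remains to sum. For fixed $N$, Cauchy--Schwarz in $L_0$ and $L_1$ (whose sums converge outright) and in $L_2$ (whose sum, being restricted to $L_2 \lesssim N$, costs an extra $N^{0+}$) bounds $\sum_{L_0,L_1,L_2} I_1$ by $N^{-\half-s+\varepsilon}(\sum_{L_0} a^2)^{\half}(\sum_{L_1} b^2)^{\half}(\sum_{L_2} c^2)^{\half}$, where $\varepsilon$ collects several arbitrarily small positive exponents (from $b-\half$, from the ``$+$'' in Corollary \ref{Cor.2.1}, and from summing the range $L_2\lesssim N$). A final Cauchy--Schwarz in the dyadic variable $N$ then gives the right-hand side $\|f\|_{X^{s,b}_{\pm,c}}\|g_1\|_{X^{s,b}_-}\|g_2\|_{X^{-s,1-b-}_+}$, provided $-\half-s+\varepsilon < 0$. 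As $s > -\half$ is fixed, this is guaranteed by taking $b>\half$ with $b-\half$ (hence all the $\varepsilon$'s) small enough that $\varepsilon < s+\half$; the identical argument yields (\ref{2.0'}). The only genuinely delicate point is the bookkeeping in the preceding paragraph: one must verify that the one power of a modulation variable which is not summable on its own is precisely the factor paired with the exponent $1-b-$, so that it is a positive power of some $L_i \lesssim N_1$ and is absorbed by the hypothesis $\max(L_0,L_1,L_2) \lesssim N_1$. Beyond this, the argument is routine once Corollary \ref{Cor.2.1} is available.
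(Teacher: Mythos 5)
Your proposal is correct and follows essentially the same route as the paper: reduce to fixed dyadic blocks, apply Corollary \ref{Cor.2.1}, and check that the powers of $N$ and of the modulations close, using $\max(L_0,L_1,L_2)\lesssim N_1$ together with $b=\half+$ to absorb the single small positive power $L_2^{0+}$ (resp. $L_0^{0+}$) before summing. Your extra observation that the angle restriction forces $N_0\sim N_1\sim N_2$ is correct but not needed; the paper instead keeps general $N_0\lesssim N_1$ and uses $s\le 0$ in the comparison $N_1^{-\half+}\lesssim N_0^s N_1^{0-}$, which gives the same numerology.
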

\begin{proof}
	Using $L_{max} \lesssim N_1$ and $b=\half+$ it suffices to prove for  $supp \, f \subset K^{\pm,c}_{N_0,L_0}$ , $supp \, g_1 \subset  K^-_{N_1,L_1}$ , $supp \, g_2 \subset K^+_{N_2,L_2}$ the estimate
	$$ |I_1(f,{g_1},{g_2})| \lesssim N_0^s N_1^{s-} N_2^{-s} (L_0 L_1 L_2)^{\half} \|f\|_{L^2} \|g_1\|_{L^2} \|g_2\|_{L^2} \, . $$
	By Cor. \ref{Cor.2.1} we obtain 
	$$
	 I_1 
	  \lesssim N_1^{-1} N_1^{\half +} (L_0L_1L_2)^{\half} \|f\|_{L^2} \|g_1\|_{L^2}\|g_2\|_{L^2} \, .
	 $$
	 It remains to prove $ N_1^{-\half+} \lesssim N_0^s N_1^{s-} N_2^{-s} \sim N_0^s N_1^{0-}$ , which applies for $ s > - \half$ .
	 Similarly we have to prove
 	$$ |I_2(f,{g_1},{g_2})| \lesssim N_0^{-s} N_1^{s-} N_2^{s} (L_0 L_1 L_2)^{\half} \|f\|_{L^2} \|g_1\|_{L^2} \|g_2\|_{L^2} \, . $$
 We obtain by Cor. \ref{Cor.2.1} :
 $$
 I_2 
 \lesssim N_0 N_1^{-1} N_2^{-1} N_1^{\half+} (L_0L_1L_2)^{\half} \|f\|_{L^2} \|g_1\|_{L^2}\|g_2\|_{L^2} \, .
 $$
 It remains to prove $ N_0 N_1^{-1} N_2^{-1} N_1^{\half+} \lesssim N_0^{-s} N_1^{s-} N_2^s \, \Leftrightarrow N_0^{1+s} \lesssim N_1^{2s+\frac{3}{2} -}$ , which applies for $ s > - \half$ .
 \end{proof}

\noindent {\bf Next we consider the case $\frac{\pi}{2} \le \angle(\xi_1,\xi_2) \le \pi$ .}

 We start with the case $\pi - \angle(\xi_1,\xi_2) \sim \frac{\pi}{A}$ and $A \le 2^5 (1-c)^{-\half} \frac{(N_1 N_2)^{\half}}{N_0}$ .
\begin{prop}
	\label{Prop.2.5}
	Let Let $f,g_1,g_2 \in L^2$ and assume  $supp \, f \subset K^{\pm,c}_{N_0,L_0}$ , $supp \, g_1 \subset Q^{j_1}_A \cap K^-_{N_1,L_1}$ , $supp \, g_2 \subset Q^{j_2}_A \cap K^-_{N_2,L_2}$ , $N_2 \ge 2^{-4} (1-c) N_1$ and $N_1 \ge 2^{-4} (1-c) N_2$, thus $N_1 \sim N_2$ , and $1 \ll N_0 $ , $4 \le A \le 2^5 (1-c)^{-\half} \frac{(N_1 N_2)^{\half}}{N_0}$ , $\frac{\pi}{2A} \le \pi - \alpha(j_1,j_2) \le \frac{2\pi}{A}$ . Then the following estimate applies:
	\begin{align*}
	I:=& |\int f(\xi_1+\xi_2,\tau_1+\tau_2) g_1(\xi_1,\tau_1) g_2(\xi_2,\tau_2) d\xi_1 d\tau_1 d\xi_2 d\tau_2|\\ &\lesssim N_0^{\half} A^{\frac{7}{8}} (L_0 L_1 L_2)^{\half} \|f\|_{L^2_{\xi \tau}} \|g_1\|_{L^2_{\xi \tau}} \|g_2\|_{L^2_{\xi \tau}} \, .
	\end{align*}
\end{prop}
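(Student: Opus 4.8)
The argument runs parallel to that of Proposition~\ref{Prop.2.1}, the essential new feature being the near-antipodal configuration $\xi_2\approx-\xi_1$ (forced, together with the opposite signs occurring in $I_1$, by $\pi-\angle(\xi_1,\xi_2)\sim\pi/A$). First I would dualize and apply Plancherel to write $I$ as the $L^2$ pairing of $f$ against $g_1\ast g_2$ restricted to the $c$-cone, decompose $f$ into $\sim L_0$ unit-width slabs transverse to $\{\tau=\mp c|\xi|\}$, and — via the co-area formula — strip off the modulation variables of $g_1,g_2$ by freezing $\tau_1=|\xi_1|+c_1$ and $\tau_2\mp|\xi_2|=c_2$; this accounts for the factor $(L_0L_1L_2)^{\half}$. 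After the conic rescaling $(\xi,\tau)\mapsto(N_1\xi,N_1\tau)$, which by (\ref{N1N2}) keeps all frequencies of size $\sim1$ except that of $f$, matters reduce — exactly as in the passage from (\ref{2.1}) to (\ref{2.3}) — to a surface convolution estimate
\[
\|\tilde{g_1}_{|S_1}\ast\tilde{g_2}_{|S_2}\|_{L^2(S_3)}\lesssim G\,\|\tilde{g_1}\|_{L^2(S_1)}\|\tilde{g_2}\|_{L^2(S_2)},
\]
where $S_1,S_2$ are rescaled $O(1)$-pieces of the light cone lying over caps of aperture $\sim(kA)^{-1}$ (one further cap subdivision with $k=k(c)$ a large constant, as in the proof of Proposition~\ref{Prop.2.1}), $S_3$ is a piece of the $c$-cone localized to $|\xi|\sim N_0/N_1$, and $G$ is a gain weaker than the $N_1$-independent gain in Proposition~\ref{Prop.2.1} precisely by the loss $A^{3/8}$.

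For this surface estimate I would again invoke the Bejenaru--Herr transversality machinery: introduce the bi-normal direction $v=\widehat{\xi_1^0}\times\widehat{\xi_2^0}/|\widehat{\xi_1^0}\times\widehat{\xi_2^0}|$, where now $|\widehat{\xi_1^0}\times\widehat{\xi_2^0}|=|\sin\angle(\xi_1^0,\xi_2^0)|\sim A^{-1}$ because the two caps are near-antipodal; foliate $S_3$ into the slices $\{\langle\xi,v\rangle=\tilde d\}$; adjoin to the $c$-cone normal $n_{S_3}$ the auxiliary normal $n_{S_3'}$ orthogonal to it; localize the various pairings $\langle\sigma_i-\sigma_i',n_j(\sigma_j')\rangle$ to slabs of the correct thickness by the $T_{k'}(n)$-decomposition used in the proof of Proposition~\ref{Prop.2.1}; and finally apply an invertible linear map $T$ built from the frame $(n_{S_1},n_{S_2},n_{S_3},n_{S_3'})$ together with the appropriate powers of $A$ and $N_0/N_1$, citing \cite{BH}, Theorem~1.3 and Proposition~1.2. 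The estimate then follows once the transformed surfaces $\tilde S_j=T^{-1}S_j$ are verified to satisfy Assumption~1.1 of \cite{BH} (diameters $\le1$, determinant of the four normals in $[\half,1]$, Lipschitz and size control of the normals), a verification that is routine given the determinant lower bound below, as in the proof of Proposition~\ref{Prop.2.1}.

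The crux, and the step I expect to be the main obstacle, is the lower bound for $d:=|\det(n_{S_1},n_{S_2},n_{S_3},n_{S_3'})|$. Here the near-antipodal geometry makes the two characteristic-cone pieces nearly tangent: $n_{S_1}-n_{S_2}=O(|\widehat{\xi_1}+\widehat{\xi_2}|)=O(A^{-1})$, so one direction of the determinant degenerates to size $A^{-1}$, and in addition $S_3$ carries curvature $\sim A$ after the rescaling; the transversality that survives must be extracted from $c<1$ (which, as in Lemma~\ref{Lemma1.1}, separates the $c$-cone from the light cone) combined with the localization $|\xi|\sim N_0/N_1$ and the hypothesis $A\le2^5(1-c)^{-\half}(N_1N_2)^{\half}/N_0$ — the latter being exactly the threshold at which the degeneration is still compensated. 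Expanding the $4\times4$ determinant along $\xi=\xi_1+\xi_2$, cancelling the term $\langle\xi,v\rangle$ (constant $=\tilde d$ on each slice) and isolating $\big|\tfrac1{|\xi_1|}-\tfrac1{|\xi_2|}\big|\,|\det(\xi_1,\xi_2,v)|$ as in the estimate of $|R|$ in Proposition~\ref{Prop.2.1}, I expect to reach $d\gtrsim_c A^{-7/4}$, so that $d^{-\half}\sim A^{7/8}$ supplies the $A$-power; assembled with the $\tilde d$- and $h$-foliation Jacobians and the scaling powers of $N_1$ this yields the constant $N_0^{\half}A^{\frac78}(L_0L_1L_2)^{\half}$. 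Should the direct computation not already give the sharp exponent $\tfrac78$, I would insert one more angular sub-decomposition of $S_3$ with a free dyadic parameter, prove the two resulting estimates (small versus large parameter) and optimize, the balance point producing the non-dyadic exponent.
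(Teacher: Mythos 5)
There is a genuine gap: the mechanism you propose for producing the factor $A^{\frac78}$ is not the one that can work, and the step where you expect it to come from is quantitatively wrong. In the near-antipodal configuration the determinant does \emph{not} degenerate to $A^{-7/4}$: the bound (\ref{2.6}) survives unchanged, because (\ref{2.3'}) gives $|\widehat{\xi_1^0}\times\widehat{\xi_2^0}|=|\sin\angle|\gtrsim A^{-1}$ equally well when $\pi-\alpha(j_1,j_2)\sim\pi/A$, and the error term $|R|$ is still absorbed using only $||\xi_1|-|\xi_2||\le|\xi|$; hence $d\sim_c A^{-1}$ and $d^{-\half}$ only yields $A^{\half}$. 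Worse, the scheme of Proposition~\ref{Prop.2.1} does not carry over verbatim after the rescaling by $N_1$ that you propose: the output surface $S_3$ then sits at radius $|\xi|\sim N_0/N_1\ll1$, so the unit normal of the $c$-cone varies there at rate $\sim N_1/N_0$, and the Lipschitz condition (III) of Assumption~1.1 in \cite{BH} fails in exactly the regime $N_0\ll N_1$ of interest (in Proposition~\ref{Prop.2.1} this verification used $|\xi|\ge\half$, cf.\ the step before (\ref{2.16})). The paper instead rescales by $N_0$, which forces $S_1,S_2$ to live at radius $\sim N_1/N_0\gg1$ and makes it necessary to (i) localize the inputs to balls of radius $\tilde\delta\sim A^{-\half}$ by an orthogonality/finite-overlap argument and (ii) decompose the output into radial annuli of thickness $N_0A^{-\half}$ in addition to the angular caps; the $\sim A$ resulting output pieces, via Cauchy--Schwarz, are what cost $A^{3/4}\le A^{7/8}$ in that branch, not a degenerate determinant.

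Your proposal also misses the case distinction that actually determines the exponent $\tfrac78$. The paper splits according to whether $\angle(\xi,\xi_1)$ is within $\sim A^{-3/4}$ of the resonant angles $\theta_0^{\pm}$ with $\cos\theta_0^{\pm}=\pm c$. Only in the resonant case is the Bejenaru--Herr machinery (with the modifications above) used. In the nonresonant case no convolution-of-measures theorem is invoked at all: Lemma~\ref{Lemma2.2} gives the modulation gain $L_{max}\gtrsim A^{-3/4}|\xi|\sim A^{-3/4}N_0$, and Lemma~\ref{Lemma2.1} (an elementary measure/counting bound based on $|\partial_2(\tau+|\xi_1|-|\xi-\xi_1|)|\gtrsim A^{-1}$, plus a reduction of $g_1$ to an $N_0$-ball) gives $\|\chi_{K_{N_0,L_0}}(g_1\ast g_2)\|_{L^2}\lesssim A^{\half}N_0(L_1L_2)^{\half}\|g_1\|_{L^2}\|g_2\|_{L^2}$; converting $N_0^{\half}\lesssim A^{\frac38}L_{max}^{\half}$ produces exactly $A^{\half+\frac38}=A^{\frac78}$. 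Your closing remark about optimizing over one extra decomposition parameter gestures at this balance, but the competing estimate is this elementary one (and the extra decomposition needed in the resonant branch is radial, not angular), so as written the proposal would not close.
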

\noindent Remark 1: We obtain
$$ N_0 \le 2|\xi|\le2(|\xi_1|+|\xi_2|) \le 4(N_1+N_2) \le 4(1+\frac{2^4}{1-c}) N_1 \le 2^7 (1-c)^{-1} N_1  \, .$$ 
	
\begin{proof}
	Define $\theta^{\pm}_0 \in (0,\pi)$ such that $\cos \theta^{\pm}_0 = \pm c$, and $\xi=\xi_1+\xi_2$. We distinguish three cases. \\
Case (I): $A > 2^{20}(1-c)^{-2}$ , $|\angle(\xi,\xi_1)-\theta_0^+| > 2^{10} (1-c)^{-1} A^{-\frac{3}{4}}$ and  $|\angle(\xi,\xi_1)-\theta_0^-| > 2^{10} (1-c)^{-1} A^{-\frac{3}{4}}$ . \\
Case (II): $A > 2^{20}(1-c)^{-2}$ , $|\angle(\xi,\xi_1)-\theta_0^+| \le 2^{10} (1-c)^{-1} A^{-\frac{3}{4}}$ or  $|\angle(\xi,\xi_1)-\theta_0^-| \le 2^{10} (1-c)^{-1} A^{-\frac{3}{4}}$ . \\
Case (III): $A \le 2^{20}(1-c)^{-2}$ . \\
{\bf Case (II):} The proof is similar to that of Prop. \ref{Prop.2.1}, but we use different decompositions. By the transformation $\tau_1=|\xi_1|+c_1$ , $\tau_2=|\xi_2|+c_2$ for fixed $\xi_1,\xi_2$ we may reduce to
\begin{align*}
& |\int f(\phi^+_{c_1}(\xi_1)+\phi^-_{c_2}(\xi_2)) g_1(\phi^+_{c_1}(\xi_1))g_2(\phi^-_{c_2}(\xi_2)) d\xi_1 d\xi_2 \\
&\lesssim N_0^{\half} A^{\frac{7}{8}} (L_0 L_1 L_2)^{\half} \|f\|_{L^2_{\xi \tau}} \|g_1\circ \phi^+_{c_1}\|_{L^2_{\xi}} \|g_2 \circ \phi^-_{c_2}\|_{L^2_{\xi}} \, ,
\end{align*}
where $\phi^{\pm}_{c_k}(\xi) = (\xi,\pm |\xi| + c_k)$ and $supp \, f \subset\{(\xi,\tau) : c_0 \le \tau+c|\xi| \le c_0+1 \} $, where $c_0$ is fixed. In fact we even prove this estimate with $A^{\frac{7}{8}}$ replaced by $A^{\frac{3}{4}}$ . Next we decompose $f,g_1,g_2$ as follows. Let $k=2^{40} (1-c)^{-2}$ , $\delta=2^{-40} (1-c)^2 N_0 A^{-\half}$  and
$$ f = \sum_{j\in \Omega_{kA}} \chi_{Q^j_{kA}} \sum _{l= - \left[\frac{N_0}{2\delta}\right]}^{l= \left[\frac{N_0}{\delta} \right]+1} \chi_{S^{N_0+ l \delta}_{\delta}} f \, ,$$
where $S^{\xi^0}_{\delta} := \{ (\xi,\tau) : \xi^0 \le |\xi| \le \xi^0+\delta \}$ . Because we are in Case II we obtain $\# j \sim A^{\half}$ . Obviously $\# l \sim A^{\half}$ , so that $\#(j,l) \sim A$ .
We may also easily reduce to considering $ g_1 = \chi_{Q^{j_1}_{kA}} g_1 $ , $g_2 = \chi_{Q^{j_1}_{kA}} g_2 $ . Thus it suffices to prove for $supp \, f \subset Q^j_{kA} \cap S^{N_0+l\delta}_{\delta}$ with fixed $l$ and $j$ the estimate
\begin{align*}
|\int f(\phi^+_{c_1}(\xi_1)+\phi^-_{c_2}(\xi_2)) g_1(\phi^+_{c_1}(\xi_1))g_2(\phi^-_{c_2}(\xi_2)) d\xi_1 d\xi_2| \\
\lesssim N_0^{\half} A^{\frac{1}{4}}\|f\|_{L^2_{\xi \tau}} \|g_1\circ \phi^+_{c_1}\|_{L^2_{\xi}} \|g_2 \circ \phi^-_{c_2}\|_{L^2_{\xi}} \, .
\end{align*}
We use the scaling $(\xi,\tau) \mapsto (N_0 \xi,N_0 \tau)$ and define
$$\tilde{f}(\xi,\tau) = f(N_0 \xi,N_0 \tau) \, , \, \tilde{g_i}(\xi_i,\tau_i) = g_k(N_0 \xi_i, N_0 \tau_i) \, , \, \tilde{c_i} = \frac{\xi_i}{N_0} \, . $$
Exactly as in Prop. \ref{Prop.2.1} we reduce to
\begin{align*}
|\int \tilde{f}(\phi^+_{\tilde{c_1}}(\xi_1)+\phi^-_{\tilde{c_2}}(\xi_2)) 
\tilde{g_1}(\phi^+_{\tilde{c_1}}(\xi_1))
\tilde{g_2}(\phi^-_{\tilde{c_2}}(\xi_2) d\xi_1 d\xi_2 | \\
\lesssim N_0^{-\half} A^{\frac{1}{4}}\|\tilde{f}\|_{L^2_{\xi \tau}} \|\tilde{g_1}\circ \phi^+_{\tilde{c_1}}\|_{L^2_{\xi}} \|\tilde{g_2} \circ \phi^-_{\tilde{c_2}}\|_{L^2_{\xi}} \, . 
\end{align*}
Here $\tilde{f}$ is supported in
$$S^{\mp}_3(N_0^{-1}) = \{ (\xi,\tau) \in Q^j_{kA} \cap S^{1+l\tilde {\delta}}_{\tilde{\delta}} : \psi^{\mp}(\xi) \le \tau \le \psi^{\mp}(\xi) + N_0^{-1} \} \, , $$
where $\psi^{\mp}(\xi):= \mp c|\xi| + c_0 N_0^{-1}$ and $\tilde{\delta} := N_0^{-1} \delta = 2^{-40} (1-c)^2 A^{-\half}$ .

Before we proceed we show that we may assume that there exist $\xi_1^0$ and $\xi_2^0$ such that
$|\xi_1^0| \sim N_1 N_0^{-1}$ , $|\xi_2^0| \sim N_2 N_0^{-1}$ with the property that $supp \,  \tilde{g_1} \circ \phi^+_{\tilde{c_1}} \subset B_{\tilde{\delta}}(\xi_1^0)$ ,  $supp \,  \tilde{g_2} \circ \phi^-_{\tilde{c_2}} \subset B_{\tilde{\delta}}(\xi_2^0)$ for the space variables, where  $B_{\tilde{\delta}}(\xi_j^0) = \{ \xi_j \in \mathbb{R}^3 : |\xi_j-\xi_j^0| \le \tilde{\delta}\}$ .

Define $C_{\tilde{\delta}}(\xi') := \{ (\xi,\tau) \in \mathbb{R}^4 . |\xi-\xi'| \le \tilde{\delta} \}$ and choose two sets $\{C_{\tilde{\delta}}(\xi_m')\}_m$ and $\{C_{\tilde{\delta}}(\xi_n'')\}_n$ , such that
$$ supp \, \tilde{g_1} \circ \phi^+_{\tilde{c_1}} \subset \cup_m C_{\tilde{\delta}}(\xi_m') \, , \,  supp \, \tilde{g_2} \circ \phi^+_{\tilde{c_2}} \subset \cup_n C_{\tilde{\delta}}(\xi_n') \, , $$
where $|\xi_m' - \xi_{m'}'| \ge \tilde{\delta}$ and $|\xi_n'' - \xi_{n'}''| \ge \tilde{\delta}$ . Thus $\# m \sim (\frac{N_1}{N_0} A^{\half})^2$ and $\# n \sim (\frac{N_2}{N_0} A^{\half})^2$. Now fix $m$. We claim that there exists only a finite number, which does not depend on $N_0,N_1,N_2$ or $m$, of indices $n$ , such that for some $\xi_1 \in C_{\tilde{\delta}}(\xi_m')$ there exists  $\xi_2 \in C_{\tilde{\delta}}(\xi_n'')$ with the property that $\xi_1+\xi_2 \in S^{1+l\tilde{\delta}}_{\tilde{\delta}} \cap Q^j_{kA}$ . Assume that $\xi_1,\xi_2$ fulfill this condition. Take any other pair $\xi_1',\xi_2'$ with this property. We obtain
\begin{align*}
&|(\xi_1+\xi_2) -(\xi_1'+\xi_2')|^2 = |\xi_1+\xi_2|^2 + |\xi_1'+\xi_2'|^2 - 2 \langle \xi_1+\xi_2,\xi_1'+\xi_2' \rangle \\
&= ||\xi_1+\xi_2|-|\xi_1'+\xi_2'||^2 + 2|\xi_1+\xi_2||\xi_1'+\xi_2'|(1- \cos \angle(\xi_1+\xi_2,\xi_1'+\xi_2')) \\
&\lesssim (2\tilde{\delta})^2 + 8 \angle(\xi_1+\xi_2,\xi_1'+\xi_2')^2 \le (2\tilde{\delta})^2 + 8 k^{-2} A^{-2} \\
&= 2^{-78} (1-c)^4 A^{-1} + 2^{-77}(1-c)^8 A^{-2} \le 2^{-76} (1-c)^4 A^{-1} = 2^{4} \tilde{\delta}^2 \, , 
\end{align*}
because $|\xi_1+\xi_2|,|\xi_1'  + \xi_2'| \le 1+ (l+1) \tilde{\delta} \le 2$ and $||\xi_1+\xi_2|-|\xi_1'+\xi_2'|| \le 2 \tilde{\delta}$ , if $\xi_1+\xi_2 , \xi_1'+\xi_2' \in S^{1+l\tilde{\delta}}_{\tilde{\delta}}$ and $\tilde{\delta} = 2^{-40} (1-c)^2 A^{-\half}$ .
This implies
$$|\xi_2-\xi_2'| \le |(\xi_1+\xi_2)-(\xi_1'+\xi_2')| + |\xi_1-\xi_1'| \le 4 \tilde{\delta} + 2 \tilde{\delta} = 6 \tilde{\delta} \, . $$
Because $\xi_2 \in C_{\tilde{\delta}}(\xi_n'')$ we obtain $\xi_2' \in C_{7\tilde \delta}(\xi_n'')$ , but $C_{7\tilde{\delta}}(\xi_n'') \cap C_{\tilde{\delta}}(\xi_{n'}'') \neq \emptyset$ only for a finite number, which does not depend on $N_0,N_1,N_2$ or $m$ ,  of indices $n'$ , because $|\xi_n'' - \xi_{n'}''| \ge \tilde{\delta}$ . This proves our claim. The same argument applies for exchanged roles of $g_1$ and $g_2$ .

Proceeding with the proof of the proposition this means that we only have to prove
$$ \|\tilde{g_1}_{| S_1} \ast \tilde{g_2}_{|S_2} \|_{L^2(S^{\mp}_3(N_0^{-1}))} \lesssim N_0^{-\half} A^{\frac{1}{4}} \|\tilde{g_1}\|_{L^2(S_1)} \|\tilde{g_2}\|_{L^2(S_2)} \, , $$
where 
$$
 S_i=\{(\xi_i,\tau_i)= \phi^+_{\tilde{c_i}}(\xi_i) \in Q^{j_i}_{kA} : \xi_i \in B_{\tilde{\delta}}(\xi_i^0) \} \, .$$
Defining
$$S^{\pm,h}_3 = \{(\xi,\tau) \in Q^j_{kA} \cap S^{1+l\tilde{\delta}}_{\tilde{\delta}} , \psi_{\mp}(\xi) - \tau = h \}$$
for $0 \le h \le N_0^{-1}$ , we obtain
$$\|\phi\|_{L^2(S^{\pm}_3(N_0^{-1}))} = (\int_0^{N_0^{-1}} \|\phi\|^2_{L^2(S^{\pm,h}_3)} dh)^{\half} \le N_0	^{-\half} \sup_h \|\phi\|_{L^2(S^{\pm,h}_3)}\, . $$
Thus we reduce to proving
\begin{equation}
\label{2.17}
\|\tilde{g_1}_{| S_1} \ast \tilde{g_2}_{|S_2} \|_{L^2(S^{\mp,h}_3)} \lesssim  A^{\frac{1}{4}} \|\tilde{g_1}\|_{L^2(S_1)} \|\tilde{g_2}\|_{L^2(S_2)} \, .
\end{equation} 
We remark that
\begin{equation}
\label{2.18}
\half \le 1+l\tilde{\delta} \le |\xi| \le 1+(l+1)\tilde{\delta} \le 2  \, .
\end{equation}
We obtain the same normals on $S_1,S_2,S_3^{\pm,h}$ as in the proof of Prop. \ref{Prop.2.1}. In order to construct the $4^{th}$ normal we foliate $S^{\pm,h}_3$ as $S_3^{\pm,h} = \cup_{\tilde{d}} S^{\pm}_{3,\tilde{d}}$ , where 
$$ S^{\pm}_{3,\tilde{d}} := S_3^{\pm,h} \cap \{ \langle \xi,v \rangle = \tilde{d} \} \, , $$
where $v$ is defined as in the proof of Prop. \ref{Prop.2.1}. Because $1+l\tilde{\delta} \le |\xi| \le 1+(l+1)\tilde{\delta}$ we know that $\tilde{d}$ varies in an interval $J$ of length
$|J| \le \tilde{\delta} \sim A^{-\half}$ . This implies
$$\|\phi\|_{L^2(S^{\pm,h}_3)} = (\int_J \|\phi\|^2_{L^2(S^{\pm}_{3,\tilde{d}})} d \tilde{d})^{\half} \lesssim A^{-\frac{1}{4}} \sup_{\tilde{d}} \|\phi\|_{L^2(S^{\pm}_{3,\tilde{d}})} \, . $$
This reduces (\ref{2.17}) to
\begin{equation}
\label{2.19}
\|\tilde{g_1}_{| S_1} \ast \tilde{g_2}_{|S_2} \|_{L^2(S^{\mp}_{3,\tilde{d}})} \lesssim  A^{\half} \|\tilde{g_1}\|_{L^2(S_1)} \|\tilde{g_2}\|_{L^2(S_2)} \, .
\end{equation}
We define the $4^{th}$ normal $n_3' := n_{S_3'}$ as in Prop. \ref{Prop.2.1}. Let $\sigma_j,\sigma_j' \in S_j$ $(j=1,2,3)$ , where $S_3:= S_{3,\tilde{d}}^{\pm}$ . Then
\begin{align*}
 \sigma_1 = & (\xi_1,|\xi_1|+\tilde{c_1}) \, , \, \sigma_1'=(\xi_1',|\xi_1'|+\tilde{c_1}) \\
 \sigma_2 = & (\xi_2,-|\xi_2|+\tilde{c_2}) \, , \, \sigma_2'=(\xi_2,-|\xi_2'|+\tilde{c_2}) \\
\sigma_3 = & (\xi,\mp c|\xi|+\frac{c_0}{N_0} -h) \, , \, \sigma_3'=(\xi',\mp c|\xi'|+\frac{c_0}{N_0}-h)  
\end{align*}
Because $\xi_1,\xi_1' \in B_{\tilde{\delta}}(\xi_1^0)$ , thus $|\xi_1| \sim |\xi_1^0| \sim N_1 N_0^{-1}$ , so that
\begin{align*}
|n_1(\sigma_1)-n_1(\sigma_1')| &= 2^{-\half} \left| \frac{\xi_1}{|\xi_1|} - \frac{\xi_1'}{|\xi_1'|}\right| = 2^{-\half} \frac{|\xi_1 |\xi_1'| - \xi_1' |\xi_1||}{|\xi_1||\xi_1'|} 
 \le 2^{\half} \frac{|\xi_1-\xi_1'|}{|\xi_1|} \\
 &\le 2^{\frac{3}{2}}\frac{\tilde{\delta}}{|\xi_1|} \le 8 \frac{\tilde{\delta} N_0}{N_1} \\
 & \le 2^{-37} (1-c)^2 A^{-\half} \frac{N_0}{N_1} \le 2^{-37}(1-c)^2A^{-\half} (\frac{2^4}{1-c})^{\half} \frac{N_0}{N_1^{\half}N_2^{\half}}\\
 & \le 2^{-37}(1-c)^2 A^{-\half}(\frac{2^4}{1-c})^{\half}2^5 (1-c)^{-\half} A^{-1} \\
 &\le 2^{-30} (1-c)  A^{-\frac{3}{2}} \, ,
\end{align*}
where we used $N_2 \ge 2^{-4} (1-c) N_1$ (cf. Remark) and  $A \le 2^5 \frac{(N_1)^{\half} (N_2)^{\half}}{N_0} (1-c)^{-\half}$ . Similarly we obtain
$$|n_2(\sigma_2)-n_2(\sigma_2')| \le 2^{-30} (1-c) A^{-\frac{3}{2}} \, . $$
By $S_3 \subset Q^j_{kA}$ we obtain
$$ |n_3(\sigma_3)-n_3(\sigma_3')| \le (kA)^{-1} = 2^{-40} (1-c)^2 A^{-1} \, . $$
As in the proof of Prop. \ref{Prop.2.1} we also obtain (cf. (\ref{2.10})):
$$|n_3'(\sigma_3)-n_3'(\sigma_3')| \le \frac{12}{kA} \le 2^{-36}(1-c)^2 A^{-1} \, . $$
By (\ref{2.7}) we obtain
\begin{align*}
|\langle \sigma_1-\sigma_1',n_1(\sigma_1') \rangle| & \le |\xi_1| \angle (\xi_1,\xi_1')^2 = |\xi_1| \left| \frac{\xi_1}{|\xi_1|} - \frac{\xi_1'}{|\xi_1'|}\right|^2 \\
& \le 2 |\xi_1|(\frac{2|\xi_1-\xi_1'|}{|\xi_1|})^2 \le 8 |\xi_1| (\frac{2 \tilde{\delta}}{|\xi_1|})^2 \le 2^6 \tilde{\delta}^2 \frac{N_0}{N_1} \\
& = 2^6 2^{-80} (1-c)^4 A^{-1} \frac{N_0}{N_1} 
\le 2^{-67} (1-c)^3 A^{-2} \, ,
\end{align*}
where we used $A \le 2^5 (1-c)^{-\half} \frac{(N_1 N_2)^{\half}}{N_0} \le 2^5 (1-c)^{-\half} \frac{2^2}{(1-c)^{\half}} \frac{N_1}{N_0} = 2^7 (1-c)^{-1} \frac{N_1}{N_0} $. Similarly we obtain
$$ |\langle \sigma_2-\sigma_2',n_2(\sigma_2') \rangle|  \le 2^{-67} (1-c)^3 A^{-2} \, . $$
Next we obtain as in the proof of Prop. \ref{Prop.2.1} :
$$
|\langle \sigma_3-\sigma_3',n_3(\sigma_3')\rangle| \le 2^6 (1-c)^{-1} (kA)^{-2} = 2^{-74} (1-c)^3 A^{-2}
$$
and also (cf. (\ref{2.8*})):
$$ |\langle \sigma_3-\sigma_3',n_3'(\sigma_3')\rangle| \le \frac{2^6}{(1-c) k^2 A^2} = 2^{-74}(1-c)^3 A^{-2}\, . $$
By the same arguments as in the proof of Prop. \ref{Prop.2.1} this implies that we may assume that the following estimates are fulfilled :
\begin{align}
\label{2.20}
 |\langle \sigma_i-\sigma_i',n_j(\sigma_j') \rangle|  &\le 2^{-66} (1-c)^3 A^{-2} \, , \\
 \label{2.21}
 |\langle \sigma_i-\sigma_i',n_3'(\sigma_3')\rangle| &\le  2^{-73} (1-c)^3 A^{-2}
\end{align}
for $1 \le i,j \le 3$ . 

Now we follow the proof of Prop. \ref{Prop.2.1}. Let the invertible linear transformation $T: \mathbb{R}^4 \to \mathbb{R}^4$ be given by
$$ T= 2^{-40}(1-c)^3 A^{-2} (N^t)^{-1} \, , $$
where $N=(n_1(\sigma_1'),n_2(\sigma_2'),n_3(\sigma_3'),n_3'(\sigma_3'))$ , and $\tilde{S_j} := T^{-1} S_j$ . We want to apply \cite{BH}, Theorem 1.3 for these manifolds $\tilde{S_j}$ . We have to prove that the assumptions are fulfilled (cf. Assumption 1.1 in \cite{BH}). Using the notation of Prop. \ref{Prop.2.1} we have to prove (I),(II),(III). \\
(I) : $diam(\tilde{S_j}) \le 1$ .\\ This is proven by (\ref{2.20}) and (\ref{2.21}) :
\begin{align*}
|T^{-1}(\sigma_j-\sigma_j')| & = 2^{40} (1-c)^{-3} A^2 |(\langle n_1(\sigma_1'),\sigma_j-\sigma_j'\rangle, ... , \langle n_3'(\sigma_3'),\sigma_j-\sigma_j' \rangle)| \\& \le 2^{40} (1-c)^{-3} A^2  2^{-64} A^{-2} (1-c)^3 \le 2^{-24} \,.
\end{align*}
(II): $\half \le \det(\tilde{n_1}(\tilde{\sigma_1}),\tilde{n_2}(\tilde{\sigma_2}),\tilde{n_3}(\tilde{\sigma_3}),\tilde{n_3'}(\tilde{\sigma_3})) \le 1$ $\forall \, \tilde{\sigma_j} \in \tilde{S_j}$ . \\
We remark that the lower bound (\ref{2.6}) for the determinant remains true, because (\ref{2.3'}) holds also in the case where  $\frac{\pi}{2A} \le \pi - \alpha(j_1,j_2) \le \frac{2\pi}{A}$ . Therefore
we obtain
$$ \|N^{-1}\| \le \frac{\|N^t\|^3}{|\det N^t|} \le 2^{11} (1-c)^{-1} A \quad , \quad \|T\| \le 2^{-29} (1-c)^2 A^{-1} \, . $$
Consequently
\begin{align*}
|N^{-1} n_j(\sigma_j) - e_j| &= |N^{-1}(n_j(\sigma_j)-n_j(\sigma_j'))| \le \|N^{-1}\| |n_j(\sigma_j) - n_j(\sigma_j')| \\
& \le 2^{11} (1-c)^{-1}\, A \, 2^{-30} (1-c) A^{-1} \le 2^{-19}  \, , \\
|N^{-1} n_3'(\sigma_3) - e_4| & \le 2^{11} (1-c)^{-1} A \, 2^{-36} (1-c)^2 A^{-1} \le 2^{-25} (1-c) \, ,
\end{align*}
which implies
$$||N^{-1} n_j(\sigma_j)| - 1| \le 2^{-19} \quad , \quad ||N^{-1}n_3'(\sigma_3)|-1| \le 2^{-25} \, , $$
Similarly as in the proof of Prop. \ref{Prop.2.1} this implies
$$
|\tilde{n_j}(\tilde{\sigma_j}) - e_j| \le 2^{-17} \quad , \quad
|\tilde{n_3}'(\tilde{\sigma_3})-e_4| \le 2^{-23} \, .
$$
This implies (II).\\
Proof of (III). Let $\tilde{\sigma_i} , \tilde{\sigma_i}^0 \in \tilde{S_i}$ . We obtain as in the proof of Prop. \ref{Prop.2.1}:
$$
|\tilde{n_i}(\tilde{\sigma_i})-\tilde{n_i}(\tilde{\sigma_i}^0)| \le 3 |N^{-1} n_i(T\tilde{\sigma_i}) - N^{-1} n_i(T\tilde{\sigma_i}^0)|\, .
$$
This implies:
\begin{align*}
\frac{|\tilde{n_i}(\tilde{\sigma_i})-\tilde{n_i}(\tilde{\sigma_i}^0)|}{|\tilde{\sigma_i}-\tilde{\sigma_i}^0|} & \le 3 \|N^{-1}\|\frac{|n_i(T\tilde{\sigma_i}) - n_i(T\tilde{\sigma_i}^0)|}{|\tilde{\sigma_i}-\tilde{\sigma_i}^0|}\\
& \le 3 \|N^{-1}\| \|T\| \frac{|n_i(T\tilde{\sigma_i}) - n_i(T\tilde{\sigma_i}^0)|}{|T\tilde{\sigma_i}-T\tilde{\sigma_i}^0|}\\
& \le 2^{-16}(1-c)  \frac{|n_i(\sigma_i)-n_i(\sigma_i^0)|}{|\sigma_i-\sigma_i^0|} 
 \le 2^{-16}(1-c) \frac{2}{|\xi_i^0|} \le 2^{-7} \le 1
\end{align*}
 by $|\xi_i^0| \ge \frac{N_1}{2N_0} \ge 2^{-8}(1-c)$ for $i=1,2$.  We used that by Remark 1 above $N_0 \le 2^7(1-c)^{-1} N_1$ . This is also true for $i=3$ using that $|\xi_3| \ge \half$ by (\ref{2.18}). As in the proof of Prop. \ref{Prop.2.1} we finally obtain also
$$ \frac{|\tilde{n_3}'(\tilde{\sigma_3}) -\tilde{n_3}'(\tilde{\sigma_3}^0)|}{|\tilde{\sigma_3}-\tilde{\sigma_3}^0|} \le 1$$

Thus the manifolds $\tilde{S_1},\tilde{S_2},\tilde{S_3}$ fulfill the assumption 1.1 of \cite{BH} with parameters $R = 1$ , $b =1$ , $\Theta = \half$ and $\beta=1$ , so that we obtain by \cite{BH}, Theorem 1.3:
$$ \| f_{| \tilde{S_1}} \ast g_{| \tilde{S_2}} \|_{L^2(\tilde{S_3})} \lesssim \|f\|_{L^2(\tilde{S_1})} \|g\|_{L^2(\tilde{S_2})} \, . $$
By application of the linear invertible mapping $T$ we obtain by \cite{BH}, Proposition 1.2 the desired estimate (\ref{2.19}), namely
$$ \|f_{|S_1} \ast g_{|S_2}\|_{L^2(S_{3,\tilde{d}}{\pm})} \lesssim d^{-\half} \|f\|_{L^2(S_1)} \|g\|_{L^2(S_2)} \lesssim A^{\half} \|f\|_{L^2(S_1)} \|g\|_{L^2(S_2)} \, , $$
where $d = |\det(n_{S_1},n_{S_2},n_{S_3},n_{S_3}')| \gtrsim A^{-1}$ . This completes the proof of case (II).\\
{\bf Case (I):} We prepare the proof by two lemmas. The first lemma is an  analogue in 3D of \cite{K}, Lemma 4.6.
\begin{lemma}
	\label{Lemma2.1}
	Let $supp \, g_1 \subset K^-_{N_1,L_1} \cap Q_{j_1}^A$ , $supp \, g_2 \subset K^+_{N_2,L_2} \cap Q_{j_2}^A$  , where $1 \ll N_0 \lesssim N_1 \sim N_2$ , $8 \le A \le M_1 := 2^5 (1-c)^{-\half} \frac{(N_1 N_2)^{\half}}{N_0}$ and $\frac{\pi}{2A} \le \pi - \alpha(j_1,j_2) \le \frac{2 \pi}{A}$. Then the following estimate holds:
	$$ \|\chi_{K^{\pm_0}_{N_0,L_0}} (g_1 \ast g_2)\|_{L^2_{\xi \tau}} \lesssim A^{\half} N_0 L_1^{\half} L_2^{\half} \|g_1\|_{L^2} \|g_2\|_{L^2} \, . $$
	\end{lemma}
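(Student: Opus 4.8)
The plan is to pass to the Fourier side and reduce the assertion to a volume estimate. Put
$$E(\xi,\tau):=\{(\xi_1,\tau_1):\ (\xi_1,\tau_1)\in\operatorname{supp}g_1,\ (\xi-\xi_1,\tau-\tau_1)\in\operatorname{supp}g_2\}.$$
Exactly as in the proof of Proposition \ref{Prop.2.2}, Cauchy--Schwarz applied in the $(\xi_1,\tau_1)$-integration gives
$$\|\chi_{K^{\pm_0}_{N_0,L_0}}(g_1\ast g_2)\|_{L^2_{\xi\tau}}\le\Big(\sup_{(\xi,\tau)\in K^{\pm_0}_{N_0,L_0}}|E(\xi,\tau)|\Big)^{\half}\|g_1\|_{L^2}\|g_2\|_{L^2},$$
so it suffices to prove $\sup_{(\xi,\tau)\in K^{\pm_0}_{N_0,L_0}}|E(\xi,\tau)|\lesssim A\,N_0^2\,L_1L_2$.

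First I would integrate out $\tau_1$. For $\xi_1$ fixed, the conditions $\langle\tau_1-|\xi_1|\rangle\sim L_1$ and $\langle(\tau-\tau_1)+|\xi-\xi_1|\rangle\sim L_2$ confine $\tau_1$ to a set of measure $\lesssim\min(L_1,L_2)$, which is moreover empty unless the resonance identity $\big||\xi_1|-|\xi-\xi_1|-\tau\big|\lesssim L_1+L_2$ holds (add the two identities and use $\tau_1+\tau_2=\tau$); since in addition $(\xi,\tau)\in K^{\pm_0}_{N_0,L_0}$ forces $\tau=\mp_0c|\xi|+O(L_0)$, this is the familiar null-type relation $\big||\xi_1|-|\xi-\xi_1|\pm_0c|\xi|\big|\lesssim L_0+L_1+L_2$ already exploited in Lemmas \ref{Lemma1.1}--\ref{Lemma1.2}. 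Thus everything reduces to the spatial volume bound
\begin{align*}
V:=\big|\{\xi_1:\ &\tfrac{\xi_1}{|\xi_1|}\in\omega^{j_1}_A,\ \tfrac{\xi-\xi_1}{|\xi-\xi_1|}\in\omega^{j_2}_A,\ \langle\xi_1\rangle\sim N_1\sim\langle\xi-\xi_1\rangle,\\
&\big||\xi_1|-|\xi-\xi_1|-\tau\big|\lesssim L_1+L_2\}\big|\ \lesssim\ A\,N_0^2\max(L_1,L_2),
\end{align*}
uniformly in $\xi$ with $\langle\xi\rangle\sim N_0$ (recall $|E|\lesssim\min(L_1,L_2)\,V$).

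The estimate for $V$ is the core of the argument and the three-dimensional analogue of \cite{K}, Lemma 4.6; this is the step I expect to be the main obstacle. Because $\pi-\alpha(j_1,j_2)\sim A^{-1}$, the directions $\xi_1/|\xi_1|$ and $-\xi_2/|\xi_2|$ nearly coincide, so I would fix a common axis $e_3$ and write $\xi_1=(u_1,\eta_1)\in\R^2\times\R$: the angular conditions then confine $\eta_1$ to an interval of length $\sim N_1$ and $u_1$ to a disk of radius $\sim N_1/A$ — the two caps give essentially the same disk because $N_0\lesssim N_1/A$, which is precisely where the hypothesis $A\le M_1=2^5(1-c)^{-\half}(N_1N_2)^{\half}/N_0$ enters. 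The analytic input is that $\nabla_{\xi_1}\big(|\xi_1|-|\xi-\xi_1|\big)=\tfrac{\xi_1}{|\xi_1|}+\tfrac{\xi_2}{|\xi_2|}$ has size $\sim A^{-1}$ and that $\eta_1\mapsto|\xi_1|-|\xi-\xi_1|$ has second derivative of size $\lesssim A^{-2}N_1^{-1}$; feeding this curvature information together with the resonance window of width $\lesssim L_1+L_2$ and the constraint $\langle\xi\rangle\sim N_0$ into a coarea/van der Corput type bound localizes $\xi_1$ enough to produce the factor $A\,N_0^2\max(L_1,L_2)$. The delicate point will be the degenerate sub-case in which $\angle(\xi,\xi_1)$ is near $0$ or $\pi$, so that $|\xi_1|-|\xi-\xi_1|$ is almost constant along $\eta_1$ and the resonance yields no gain; there one must instead exploit $\langle\xi\rangle\sim N_0$ directly (bounding the admissible set of $\xi$ rather than of $\xi_1$), using again that $A\le M_1$ forces the transverse offset between the two caps to be small, and carrying all the $(1-c)$-dependent constants as in the proof of Proposition \ref{Prop.2.1}.
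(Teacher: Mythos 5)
Your reduction via Cauchy--Schwarz to a bound for $\sup_{(\xi,\tau)\in K^{\pm_0}_{N_0,L_0}}|E(\xi,\tau)|$, the gain $\min(L_1,L_2)$ from the $\tau_1$-integration, and the transversality input $\nabla_{\xi_1}\bigl(|\xi_1|-|\xi-\xi_1|\bigr)=\frac{\xi_1}{|\xi_1|}+\frac{\xi_2}{|\xi_2|}$ of size $\sim A^{-1}$ (used to turn the resonance window into a localization of width $\lesssim A\max(L_1,L_2)$ in one transverse direction) all agree with the paper's argument. The genuine gap is in how you produce the factor $N_0^2$. For \emph{fixed} $(\xi,\tau)$ the only constraints on $\xi_1$ are the two cone conditions and the resonance window: the cone over $\omega^{j_1}_A$ and the translate by $\xi$ of the cone over $-\omega^{j_2}_A$ are nearly parallel with axis offset $\sim A^{-1}$, comparable to their apertures, so their intersection at radius $\sim N_1$ has longitudinal extent $\sim N_1$ and extent $\sim N_1/A$ in the transverse direction orthogonal to the plane of the two cap centers; the resonance constrains only the remaining transverse direction (the gradient is essentially orthogonal to the axis, with longitudinal component $O(A^{-2})$), giving $\lesssim A\max(L_1,L_2)$ there. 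Hence the best fixed-$\xi$ volume bound your localizations yield is $\sim N_1\cdot(N_1/A)\cdot A\max(L_1,L_2)=N_1^2\max(L_1,L_2)$, which misses the required $A N_0^2\max(L_1,L_2)$ by a factor up to $N_1^2/(AN_0^2)\ge N_1/N_0$ (recall $A\le M_1\sim N_1/N_0$). Your observation that the caps confine $u_1$ to a disk of radius $\sim N_1/A\ge N_0$ points the wrong way: that is a \emph{weaker} localization than scale $N_0$, and "bounding the admissible set of $\xi$" cannot help, since $\xi$ is fixed in the supremum.

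What is missing is the almost-orthogonality step the paper performs \emph{before} fixing $(\xi,\tau)$: cover $supp\,g_1$ and $supp\,g_2$ in the space variable by balls $C_{N_0}(\xi_l')$, $C_{N_0}(\xi_n'')$ of radius $N_0$ with $N_0$-separated centers; because the output is restricted to $\langle\xi\rangle\sim N_0$, each ball for $g_1$ interacts with only $O(1)$ balls for $g_2$, so by orthogonality one may assume $supp\,g_1\subset C_{N_0}(\xi')$. After this reduction two of the three $\xi_1$-coordinates contribute $N_0^2$, the directional derivative bound $|\partial_2(\tau+|\xi_1|-|\xi-\xi_1|)|\gtrsim A^{-1}$ (after rotating $j_1$ to the first axis and splitting according to which transverse coordinate of $\xi-\xi_1$ dominates, which also handles the possible cancellation in your gradient estimate) confines the remaining coordinate to measure $\lesssim A\max(L_1,L_2)$, and the $\tau_1$-integration gives $\min(L_1,L_2)$, i.e. $|E|\lesssim AN_0^2L_1L_2$ as needed; this is exactly the paper's proof, so with the ball decomposition added your outline closes. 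Two smaller points: the hypothesis $A\le M_1$ is not what produces the $N_0$-localization (it is not used quantitatively in this lemma, only in Lemma \ref{Lemma2.2} and in the application), and the second-derivative/van der Corput input you invoke is not needed — only first-order transversality enters.
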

\begin{proof}
	As before we may assume that $supp \, g_i \subset  Q_{j_i}^{kA}$ for, say,  $k=4$ . 
	
	Moreover we claim that we may reduce to the case where the space variables fulfill $supp \, g_1 \subset C_{N_0}(\xi') := \{\xi : |\xi-\xi'|\le N_0\}$ for  some $\xi' \in \mathbb{R}^3$ .  To see this we choose a finite covering 
	 $supp \, g_1 \subset \cup_l C_{N_0}(\xi_l') \, , \, |\xi_l'-\xi_{l'}'| \ge N_0 \, \forall l\neq l' \, ,\,  \# l \sim (\frac{N_1}{N_0})^2 $ and similarly $supp \, g_2 \subset \cup_n C_{N_0}(\xi_n'') \, , \, |\xi_n''-\xi_{n'}''| \ge N_0 \, \forall n\neq n' \, ,\,  \# n \sim (\frac{N_2}{N_0})^2 $  . Now fix $l$. If $\xi_1,\xi_1^0 \in C_{N_0}(\xi_l')$ so that $|\xi_1-\xi_1^0| \le 2N_0$ , and 
	 $|\xi - \xi^0| \le 4 N_0$ , which we may asume, we obviously obtain $|\xi_2-\xi_2^0| \le 6N_0$ , so that $|\xi_2-\xi_n''| \le 7N_0$ and $|\xi_2^0-\xi_n''| \le 7N_0$ for some $n$ . Because $|\xi_n''-\xi_{n'}''| \ge N_0$ this means that there exist only finitely many, independent of $l$ and $N_0$, indices $n$ with the property that $\xi_2,\xi_2^0 \in C_{N_0}(\xi_n'')$ and vice versa. By an easy orthogonality argument this implies our claim. 
	 
	 As in the proof of Prop. \ref{Prop.2.2} we have to prove $$ \sup_{(\xi,\tau) \in K^{\pm_0}_{N_0,L_0}} |E(\tau,\xi)| \lesssim A N_0^2 L_1 L_2\, . $$
	 where 
	 \begin{align*}
	  E(\tau,\xi) := \{ (\tau_1,\xi_1) \in Q^{j_1}_{kA}  : \xi_1 \in C_{N_0}(\xi') \, , \, &\langle \tau-\tau_1 -|\xi-\xi_1| \rangle \sim L_1 \,, \langle \tau_1+|\xi_1| \rangle \sim L_2 \\
	  & , \, (\tau-\tau_1,\xi-\xi_1) \in Q^{j_2}_{A} \}\, . 
	  \end{align*}
	 It is immediately clear that for fixed $\xi_1$ we obtain
	 $$ | \{\tau_1 : (\tau_1,\xi_1) \in E(\tau,\xi) \}| \lesssim \min (L_0,L_1) \, . $$ 
	 By rotation we may assume $j_1=0$ . We split $E(\tau,\xi)$ into the parts
	 $E_+(\tau,\xi)$ and $E_-(\tau,\xi)$ , where $|(\xi-\xi_1)_2| \ge |(\xi-\xi_1)_3|$  and $|(\xi-\xi_1)_2| \le |(\xi-\xi_1)_3|$ , respectively. In the first case we estimate
	 \begin{align*}
	 |\partial_2(\tau+|\xi_1| - |\xi-\xi_1|)| & = \left|\frac{(\xi_1)_2}{|\xi_1|} + \frac{(\xi-\xi_1)_2}{|\xi-\xi_1|} \right|\\
	 & \ge  \frac{|(\xi-\xi_1)_2|}{|\xi-\xi_1|} -\frac{|(\xi_1)_2|}{|\xi_1|} \, . 
	 \end{align*}
Now using polar coordinates
	$\frac{\xi-\xi_1}{|\xi-\xi_1|} = (\cos \Theta, \sin \Theta \sin \theta, \sin \Theta \cos \theta)$ with $\pi -\Theta \ge \frac{\pi}{2A}$ we obtain 
	$\frac{|(\xi-\xi_1)_2|}{|\xi-\xi_1|} = |\sin \Theta \sin \theta| \ge \frac{1}{2\sqrt{2}}(\pi - \Theta) \ge \frac{\pi}{4\sqrt{2} A}$ , where we remark that in the case of $E_+$ we have $|\sin \theta| \ge |\cos \theta|$ , thus $|\sin \theta| \ge \frac{1}{\sqrt{2}}$ . Moreover for $(\tau_1,\xi_1) \in Q^0_{4A}$ we have $\frac{|(\xi_1)_2|}{|\xi_1|} =|\frac{(\xi_1)_2}{|\xi_1|} - (1,0,0)_2|\le  \frac{1}{4A}$ . This implies
	$$|\partial_2(\tau+|\xi_1| - |\xi-\xi_1|)| \gtrsim A^{-1} \, .$$
	Combining this with $$|\tau+|\xi_1|-|\xi-\xi_1|| \lesssim |\tau-\tau_1-|\xi-\xi_1| + |\tau_1+|\xi_1||  \lesssim \max(L_0,L_1) $$
	we obtain 
	$$ |\{ (\xi_1)_2 : (\tau_1,\xi_1) \in E_+(\tau,\xi) \}| \lesssim A \max(L_0,L_1) \, . $$
	By the condition $\xi_1 \in C_{N_0}(\xi')$ for some $\xi'$ we obtain
	$$|E_+(\tau,\xi)| \lesssim A N_0^2 \max(L_0,L_1) \min (L_0,L_1) = AN_0^2 L_0 L_1 \, . $$ In the same way we obtain in the case $E_-$ :
	$$|\partial_3(\tau+|\xi_1|-|\xi-\xi_1|)| \gtrsim A^{-1} \, , $$
	which implies the same estimate for $E_-(\tau,\xi)$ . This completes the proof.
	\end{proof}
The second lemma is also essentially given by Kinoshita (\cite{K}, Lemma 4.5).
\begin{lemma}
	\label{Lemma2.2}
	Let $\tau=\tau_1+\tau_2$ , $\xi=\xi_1+\xi_2$ , $ 0 < c < 1$ , $N_1 \ge \frac{1-c}{8(1+c)} N_2$ , and $A \le M_1 := 2^5 (1-c)^{-\half} \frac{(N_1 N_2)^{\half}}{N_0}$ ,
	and assume that we are in Case (I). Then the following estimate holds:
	$$L_{max} := \max(\langle \tau \pm c|\xi|\rangle,\langle \tau_1 - |\xi_1| \rangle,\langle \tau_2 + |\xi_2|\rangle) \gtrsim A^{-\frac{3}{4}} |\xi|\, . $$
	\end{lemma}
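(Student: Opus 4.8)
\emph{Proof proposal.} The plan is to argue by contradiction: I will show that if $L_{max}\le\kappa\,A^{-3/4}|\xi|$ for a sufficiently small constant $\kappa=\kappa(c)>0$, then $\angle(\xi,\xi_1)$ must lie too close to one of the critical angles $\theta_0^{\pm}$ (defined by $\cos\theta_0^{\pm}=\pm c$), contradicting the hypothesis that we are in Case (I). Since $|\xi|\sim N_0\lesssim N_1$, we may additionally assume $L_{max}\ll N_1$, for otherwise the claimed bound is trivial.

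The first step is the elementary identity coming from $\tau=\tau_1+\tau_2$: writing $\epsilon=+1$ if the sign in $\langle\tau\pm c|\xi|\rangle$ is $+$ and $\epsilon=-1$ otherwise, and recalling $\pm_1=-$, $\pm_2=+$, one has
\[
|\xi_2|-|\xi_1|-\epsilon c|\xi| \;=\; (\tau_1-|\xi_1|)+(\tau_2+|\xi_2|)-(\tau\pm c|\xi|),
\]
so that $\big|\,|\xi_2|-|\xi_1|-\epsilon c|\xi|\,\big|\lesssim L_{max}$, since each term on the right is controlled by the respective $L_j$. I then feed $|\xi_2|=|\xi_1|+\epsilon c|\xi|+O(L_{max})$ into the exact law of cosines $\cos\angle(\xi,\xi_1)=\frac{|\xi|^2+|\xi_1|^2-|\xi_2|^2}{2|\xi||\xi_1|}$; squaring, and using $|\xi_1|,|\xi_2|\sim N_1$ and $|\xi|\ll N_1$ (which one gets from $A\le M_1$ together with $A\gtrsim (1-c)^{-2}$), this gives
\[
\cos\angle(\xi,\xi_1)\;=\;-\epsilon c\;+\;\frac{(1-c^2)|\xi|}{2|\xi_1|}\;+\;O\!\Big(\tfrac{L_{max}}{|\xi|}\Big).
\]
Here the crucial input is that $A\le M_1=2^5(1-c)^{-1/2}(N_1N_2)^{1/2}/N_0$, combined with $N_1\gtrsim (1-c)^{1/2}(N_1N_2)^{1/2}$ (which follows from $N_1\ge\frac{1-c}{8(1+c)}N_2$), yields $|\xi|/|\xi_1|\lesssim (1-c)^{-1}A^{-1}$, so the middle term is $\lesssim A^{-1}\le A^{-3/4}$. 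Under the contradiction hypothesis $L_{max}\le\kappa A^{-3/4}|\xi|$ this forces
\[
\big|\cos\angle(\xi,\xi_1)+\epsilon c\big|\;\lesssim\;A^{-3/4}.
\]

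Finally I invert. Put $\theta_*:=\arccos(-\epsilon c)\in\{\theta_0^+,\theta_0^-\}$, so $\sin\theta_*=\sqrt{1-c^2}$. Because Case (I) assumes $A>2^{20}(1-c)^{-2}$, the right-hand side above is far below $1-c$, so $\arccos$ is Lipschitz with constant $\le\sqrt{2/(1-c)}$ on the interval joining $\cos\angle(\xi,\xi_1)$ to $-\epsilon c$, and the previous display becomes $\big|\angle(\xi,\xi_1)-\theta_*\big|\lesssim (1-c)^{-1/2}A^{-3/4}$. Choosing $\kappa$ small enough that the implicit constant here is strictly less than $2^{10}(1-c)^{-1}$ — which is possible since one loses only a factor $(1-c)^{-1/2}$, not $(1-c)^{-1}$ — this contradicts the defining inequality of Case (I), namely $|\angle(\xi,\xi_1)-\theta_0^{\pm}|>2^{10}(1-c)^{-1}A^{-3/4}$ for both signs. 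Hence $L_{max}\gtrsim A^{-3/4}|\xi|$.

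The only genuinely delicate point is the bookkeeping of the $(1-c)$-dependent constants, so that the final estimate for $|\angle(\xi,\xi_1)-\theta_*|$ comes out strictly under the Case (I) threshold; the restriction $A>2^{20}(1-c)^{-2}$ is exactly what makes the $\arccos$-inversion legitimate, and the exponent $A^{-3/4}$ is forced because the middle term in the law-of-cosines expansion is already $\lesssim A^{-1}$. Everything else is elementary trigonometry.
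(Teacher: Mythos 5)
Your argument is correct and is essentially the paper's proof run in contrapositive form: the paper directly bounds $L_{max}\ge K_1-K_2$ with $K_1=|\xi|\,|\cos\theta_0^{\pm}-\cos\angle(\xi,\xi_1)|$ bounded below via the Case (I) angular separation and $K_2=\big||\xi_1|-|\xi_2|-|\xi|\cos\angle(\xi,\xi_1)\big|\lesssim |\xi|^2/|\xi_1|$ bounded above using $A\le M_1$ and $A>2^{20}(1-c)^{-2}$, which are exactly your resonance identity, law-of-cosines expansion, and $|\xi|/|\xi_1|\lesssim (1-c)^{-1}A^{-1}$ steps, with your arccos-Lipschitz inversion being the same nondegeneracy of $\cos$ near $\theta_0^{\pm}$ used in the opposite direction. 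The constant bookkeeping you flag indeed closes (with room to spare, since the $(1-c^2)$ factor in the middle term offsets the $(1-c)^{-1}$ loss), so the proof is sound.
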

\begin{proof}
	We obtain
	\begin{align*}
	L_{max} &\ge |\pm c|\xi|+|\xi_1|-|\xi_2||\\& \ge |\pm c|\xi| +|\xi| \cos \angle(\xi,\xi_1)| - ||\xi_1|-|\xi_2|-|\xi| \cos \angle(\xi,\xi_1)| =: K_1-K_2 \, .
	\end{align*}
	Now we use our assumption (I) and obtain
	\begin{align*}
	K_1 = |\xi| |\cos \theta_0^{\pm} - \cos \angle(\xi,\xi_1)| \ge |\xi| \frac{(1-c)^{\half}}{ 4} |\theta_0^{\pm} - \angle(\xi,\xi_1)| \ge  2^8 (1-c)^{-\half} |\xi| A^{-\frac{3}{4}} \, .
	\end{align*}	
and
\begin{align*}
K_2 & = ||\xi_1|-|\xi-\xi_1|-|\xi| \cos \angle (\xi,\xi_1)|= \left|\frac{2\langle \xi,\xi_1\rangle -|\xi|^2}{|\xi_1|+|\xi-\xi_1|}-|\xi| \cos \angle (\xi,\xi_1)\right| \\
& \le \left| \frac{2\langle \xi,\xi_1\rangle}{|\xi_1|+|\xi-\xi_1|} - |\xi| \cos \angle(\xi,\xi_1) \right| + \frac{|\xi|^2}{|\xi_1|} =\frac{2|\langle \xi,\xi_1\rangle|||\xi_1|-|\xi-\xi_1||}{|\xi_1|||\xi_1|+|\xi-\xi_1||} + \frac{|\xi|^2}{|\xi_1|} \\
& \le \frac{2|\xi|||\xi_1|-|\xi-\xi_1||}{|\xi_1|+|\xi-\xi_1|} +\frac{|\xi|^2}{|\xi_1|} \le \frac{3|\xi|^2}{|\xi_1|} \le 3|\xi| 2^2 \frac{N_0}{N_1}  \\
&
\le 3 |\xi| 2^2 \frac{N_0}{N_1^{\half} N_2^{\half}} (\frac{8(1+c)}{1-c})^{\half}
\le 3 |\xi| 2^2  (\frac{8(1+c)}{1-c})^{\half}               2^5 (1-c)^{-\half} A^{-1} \\
&\le 2^{11} |\xi| (1-c)^{-1} A^{-1} \le 2^{11} |\xi| (1-c)^{-1} A^{-\frac{3}{4}} 2^{-5} (1-c)^{\half}
\le 2^6 |\xi| (1-c)^{-\half} A^{-\frac{3}{4}}
\end{align*}
where we used our assumptions 
$ A^{-1} \ge 2^{-5}(1-c)^{\half} \frac{N_0}{N_1^{\half} N_2^{\half}} $ and $A > 2^{20} (1-c)^{-2}$.
This implies
$$ K_1-K_2 \gtrsim |\xi|A^{-\frac{3}{4}} \, . $$
\end{proof}
Now we proceed with the proof of Prop. \ref{Prop.2.2}. We just proved that in this case $L_{max} \gtrsim A^{-\frac{3}{4}} N_0$ . Let us first consider the case $L_0 \gtrsim A^{-\frac{3}{4}} N_0$ . By Lemma \ref{Lemma2.1} we obtain
		\begin{align*}
	&	|\int f(\xi,\tau) g_1(\xi_1,\tau_1) g_2(\xi_2,\tau_2) d\xi_1 d\xi_2 d\tau_1 d\tau_2| \le \|f\|_{L^2} \|g_1 \ast g_2\|_{L^2} \\
	& \lesssim A^{\half} N_0 L_1^{\half} L_2^{\half} \|f\|_{L^2} \|g_1\|_{L^2} \|g_2\|_{L^2}  \lesssim A^{\frac{7}{8}} N_0^{\half} L_0^{\half} L_1^{\half} L_2^{\half} \|f\|_{L^2} \, . \|g_1\|_{L^2} \|g_2\|_{L^2}\, . 
		\end{align*}
In the case $L_1 \gtrsim 	A^{-\frac{3}{4}} N_0$ we obtain by Prop. \ref{Prop.1.1}:	
	\begin{align*}
&	|\int f(\xi,\tau) g_1(\xi_1,\tau_1) g_2(\xi_2,\tau_2) d\xi_1 d\xi_2 d\tau_1 d\tau_2| \le \|g_1\|_{L^2} \|f \ast g_2\|_{L^2} \\
& \lesssim  N_0 L_0^{\half} L_2^{\half} \|f\|_{L^2} \|g_1\|_{L^2} \|g_2\|_{L^2}  \lesssim A^{\frac{3}{8}} N_0^{\half} L_0^{\half} L_1^{\half} L_2^{\half} \|f\|_{L^2} \, . \|g_1\|_{L^2} \|g_2\|_{L^2}\, . 
\end{align*}
The case $L_2 \gtrsim 	A^{-\frac{3}{4}} N_0$ is treated similarly. This completes the proof in Case (I).\\
{\bf Case (III):} This can be shown similarly as Case (II), even easier. Using the notation above, we obtain this time:
$\# j \sim A^2$ , but this makes no difference, because $A^2 \sim A \sim A^{\half} \sim 1$ . We obtain essentially the same estimates as in case (II) , e.g.
\begin{align*}
|n_1(\sigma_1) - n_1(\sigma_1')| &\le 2^{-37} (1-c)^2 A^{-\half} \frac{N_0}{N_1} \\
&\le 2^{-37} (1-c)^2 2^{-20} (1-c)^2 A^{-\frac{3}{2}} 2^7 (1-c)^{-1} = 2^{-50} (1-c)^3 A^{-\frac{3}{2}}
\end{align*}
by the assumption $A \le 2^{20} (1-c)^{-2}$ and $\frac{N_0}{N_1} \le 2^7 (1-c)^{-1}$ (cf. Remark 1) . Similarly
\begin{align*}
|\langle \sigma_1 - \sigma_1',n_1(\sigma_1') \rangle| \le 2^{-74} (1-c)^4 A^{-1} \frac{N_0}{N_1} \le 2^{-87} (1-c)^5 A^{-2} \, .
\end{align*}
The remainder of the proof is exactly as in case (II).
\end{proof}

Next we consider the case when $\angle(\xi_1,\xi_2)$ is close to $\pi$ , which is much easier.
\begin{prop}
	\label{Prop.2.6}
	Let $\tau=\tau_1+\tau_2$ , $\xi=\xi_1+\xi_2$ and $M_1 \ge 2^5 (1-c)^{-\half} \frac{N_1^{\half} N_2^{\half}}{N_0}$ . Assume $(\xi_1,\tau_1) \in Q^{j_1}_{M_1}$ ,  $(\xi_2,\tau_2) \in Q^{j_2}_{M_1}$ , where  $0 \le \pi - \angle(\xi_1,\xi_2) \le \frac{\pi}{M_1}$ . Then the following estimate holds: 
		$$\max(\langle \tau \pm c|\xi|\rangle,\langle \tau_1 - |\xi_1|\rangle,\langle \tau_2 + |\xi_2|\rangle) \gtrsim |\xi| \, . $$
	\end{prop}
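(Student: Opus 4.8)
The plan is, just as in Lemmas~\ref{Lemma1.1} and~\ref{Lemma1.2}, to bound the maximum from below by a single linear combination of the three weights via the triangle inequality, and then to reduce matters to a purely geometric estimate comparing $|\xi|$ with $\bigl||\xi_1|-|\xi_2|\bigr|$. Here $N_0,N_1,N_2$ denote, as throughout, the dyadic sizes of $\xi,\xi_1,\xi_2$, so that $|\xi_i|\le 2N_i$ and $N_0\le\langle\xi\rangle$. The point is that the threshold $M_1\ge 2^5(1-c)^{-\half}N_1^{\half}N_2^{\half}/N_0$ is chosen precisely so that the angular defect of $\xi_1,\xi_2$ from being antiparallel contributes only a small fraction of $N_0^2\sim|\xi|^2$ to $|\xi_1+\xi_2|^2$.

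First, since $\tau=\tau_1+\tau_2$,
$$\max\bigl(\langle\tau\pm c|\xi|\rangle,\langle\tau_1-|\xi_1|\rangle,\langle\tau_2+|\xi_2|\rangle\bigr)\ \ge\ \tfrac13\bigl|\pm c|\xi|+|\xi_1|-|\xi_2|\bigr|\ \ge\ \tfrac13\bigl(\bigl||\xi_1|-|\xi_2|\bigr|-c|\xi|\bigr),$$
so it suffices to produce $\alpha=\alpha(c)>c$ with $\bigl||\xi_1|-|\xi_2|\bigr|\ge\alpha|\xi|$; the right-hand side is then $\gtrsim|\xi|$, with an implicit constant depending only on the fixed speed $c$. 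The case $N_0=1$ is trivial, since then $|\xi|\le\langle\xi\rangle\le2\lesssim\langle\tau_1-|\xi_1|\rangle$; so we may assume $N_0\ge2$, whence $|\xi|^2\ge N_0^2-1\ge\tfrac12N_0^2$.

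For the geometric estimate put $\theta:=\pi-\angle(\xi_1,\xi_2)\in[0,\pi/M_1]$ and invoke the law of cosines:
$$|\xi|^2=(|\xi_1|-|\xi_2|)^2+4|\xi_1|\,|\xi_2|\sin^2(\theta/2).$$
Using $\sin^2(\theta/2)\le\theta^2/4\le\pi^2/(4M_1^2)$, the bounds $|\xi_i|\le2N_i$, and $M_1^2\ge2^{10}(1-c)^{-1}N_1N_2/N_0^2$,
$$4|\xi_1|\,|\xi_2|\sin^2(\theta/2)\ \le\ \frac{4\pi^2N_1N_2}{M_1^2}\ \le\ 2^{-10}\cdot4\pi^2(1-c)N_0^2\ \le\ 2^{-4}(1-c)N_0^2\ \le\ \tfrac18(1-c)|\xi|^2 .$$
Hence $(|\xi_1|-|\xi_2|)^2\ge\bigl(1-\tfrac18(1-c)\bigr)|\xi|^2=:\alpha^2|\xi|^2$, and since $\alpha^2-c^2=(1-c)(1+c)-\tfrac18(1-c)=(1-c)\bigl(\tfrac78+c\bigr)>0$ and $\alpha+c\le2$, we obtain $\alpha-c\ge\tfrac7{16}(1-c)$. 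Substituting into the first display yields the asserted estimate, $\max(\dots)\gtrsim(1-c)|\xi|$.

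The argument is elementary; the only step requiring genuine care is the bookkeeping of the absolute constants in the chain of inequalities, so as to guarantee that the angular error term is a strictly smaller multiple of $|\xi|^2$ than $(|\xi_1|-|\xi_2|)^2$ — which is exactly the content of the hypothesis on $M_1$. I anticipate no other obstacle.
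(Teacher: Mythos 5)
Your proof is correct and follows essentially the same route as the paper: bound the maximum below by $\bigl||\xi_1|-|\xi_2|\bigr|-c|\xi|$, use the law of cosines with the near-antiparallel angle condition and the lower bound on $M_1$ to show the cross term is a small multiple of $|\xi|^2$, and conclude $\bigl||\xi_1|-|\xi_2|\bigr|\ge\alpha|\xi|$ with $\alpha>c$. Your treatment is in fact a bit more careful than the paper's (explicit factor $\tfrac13$ from the triangle inequality, the trivial case $N_0\sim1$, and the comparison $N_0\lesssim|\xi|$), but these are only refinements of the same argument.
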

\begin{proof}
	We start with the estimate
	$$\max(\langle \tau \pm c|\xi|\rangle,\langle \tau_1 - |\xi_1|\rangle,\langle \tau_2 + |\xi_2|\rangle) \ge ||\xi_1|-|\xi_2||-c|\xi|\,. $$
	We obtain
	\begin{align*}
	|\xi|^2 &= |\xi_1|^2+|\xi_2|^2 + 2 \langle \xi_1, \xi_2 \rangle = ||\xi_1|-|\xi_2||^2 + 2|\xi_1||\xi_2| (1+ \cos \angle(\xi_1,\xi_2)) \\
	& \le ||\xi_1|-|\xi_2||^2 +2|\xi_1||\xi_2| |\pi - \angle(\xi_1,\xi_2)|^2 \\
	&\le ||\xi_1|-|\xi_2||^2 + 2|\xi_1||\xi_2| \pi^2 2^{-10} (1-c) \frac{N_0^2}{N_1 N_2} \\
	& \le  ||\xi_1|-|\xi_2||^2 + 2|\xi_1||\xi_2| \pi^2 2^{-6} (1-c) \frac{|\xi|^2}{|\xi_1| |\xi_2|} \\
	& \le ||\xi_1|-|\xi_2||^2 + \frac{1-c}{2} |\xi|^2\, .
	\end{align*}
	This implies
	$$ \frac{1+c}{2} |\xi|^2\le ||\xi_1|-|\xi_2||^2 \, , $$
	so that
	$$\max(\langle \tau \pm c|\xi|\rangle,\langle \tau_1 - |\xi_1|\rangle,\langle \tau_2 - |\xi_2|\rangle) \ge (\sqrt{\frac{1+c}{2}}-c) |\xi| \ge \frac{1-c}{4} |\xi| \, . $$
	\end{proof}

We combine Prop. \ref{Prop.2.5} and Prop. \ref{Prop.2.6} to prove (\ref{2.0}) and (\ref{2.0'}) .\begin{prop}
	\label{Prop.2.7}
	Assume $s >-\half$ and $1 \le N_0 \lesssim N_1 \sim N_2$ , $M_1 := 2^5 (1-c)^{-\half} \frac{(N_1 N_2)^{\half}}{N_0} $ , $ L_{max} \ll N_1$ . Then the following estimates apply:
	\begin{align*}
&\sum_{N_1} \sum_{1 \le N_0 \lesssim N_1 \sim N_2} \sum_{L_0,L_1,L_2 \le N_1} \sum_{\stackrel{-M_1 \le j_1,j_2\le M_1-1}{0 \le \pi - \alpha(j_1,j_2) \le \pi M_1^{-1}}}
 |I_1(f,{g_1}_{|Q^{j_1}_{M_1}},{g_2}_{|Q^{j_2}_{M_1}})| 
 \\&\lesssim \|f\|_{X^{s,b}_{\pm,c}} \|g_1\|_{X^{s,b}_-} \|g_2\|_{X^{-s,1-b-}_+} \\
&\sum_{N_1} \sum_{1 \le N_0 \lesssim N_1 \sim N_2} \sum_{L_0,L_1,L_2 \le N_1} \sum_{\stackrel{-M_1 \le j_1,j_2\le M_1-1}{0 \le \pi - \alpha(j_1,j_2) \le \pi M_1^{-1}}}
|I_2(f,{g_1}_{|Q^{j_1}_{M_1}},{g_2}_{|Q^{j_2}_{M_1}})| 
\\&\lesssim \|f\|_{X^{-s,1-b-}_{\pm,c}} \|g_1\|_{X^{s,b}_-} \|g_2\|_{X^{s,b}_+} \, , 
 \end{align*}
 where 
 $$ I_1 := N_1^{-1} \int (P_{K^{\pm,c}_{N_0,L_0}} f) (P_{K_{N_1,L_1}^-}g_1)(P_{K_{N_2,L_2}^+} g_2) dt dx \, , $$
 and
  $$ I_2 := N_0 N_1^{-1}N_2^{-1} \int (P_{K^{\pm,c}_{N_0,L_0}} f) (P_{K_{N_1,L_1}^-}g_1)(P_{K_{N_2,L_2}^+} g_2) dt dx \, . $$
\end{prop}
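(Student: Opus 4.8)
\emph{Plan.} The plan is to prove the two estimates (for $I_1$ and for $I_2$) together, reducing each of them — for fixed dyadic $N_0$, $N_1\sim N_2$, $L_0,L_1,L_2\le N_1$, and a fixed admissible pair of caps $\omega^{j_1}_{M_1},\omega^{j_2}_{M_1}$ — to one uniform bilinear bound coming from Proposition \ref{Prop.1.1}. Relative to the earlier cases the only new ingredients are the modulation lower bound of Proposition \ref{Prop.2.6} and almost-orthogonality in the angular variable. First I would record that, since $1\le N_0\lesssim N_1$ and $N_0\le\langle\xi\rangle\le2N_0$, the hypotheses $\xi_k/|\xi_k|\in\omega^{j_k}_{M_1}$ $(k=1,2)$ and $0\le\pi-\alpha(j_1,j_2)\le\pi M_1^{-1}$ force $0\le\pi-\angle(\xi_1,\xi_2)\le\pi M_1^{-1}$; hence Proposition \ref{Prop.2.6} applies (the value of $M_1$ there is exactly the present one) and, for every non-vanishing contribution, gives $L_{max}:=\max(L_0,L_1,L_2)\gtrsim|\xi|$, hence $L_{max}\gtrsim N_0$ (using $|\xi|\sim N_0$ for $N_0\gg1$ and $L_{max}\ge1\gtrsim N_0$ for $N_0\sim1$). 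In particular the range $N_0\sim N_1$ is empty under the standing assumption $L_{max}\ll N_1$.

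Next I would use the angular structure. Because $\pi-\alpha(j_1,j_2)\le\pi M_1^{-1}$ and every cap $\omega^{j}_{M_1}$ has diameter $\lesssim M_1^{-1}$, for each $j_1$ there are only $O(1)$ admissible $j_2$ — those whose cap lies within $O(M_1^{-1})$ of the antipode of $\omega^{j_1}_{M_1}$ — and symmetrically. Hence, once a uniform bound $|I(f,g_1^{j_1},g_2^{j_2})|\le B\|f\|_{L^2}\|g_1^{j_1}\|_{L^2}\|g_2^{j_2}\|_{L^2}$ over admissible pairs is established, Cauchy--Schwarz in $j_1$ and the finite overlap of the $Q^{j}$'s give $\sum_{j_1,j_2}|I|\lesssim B\|f\|_{L^2}\|g_1\|_{L^2}\|g_2\|_{L^2}$: the angular sum is lossless, and it remains only to choose $B$ well for each $(N_0,N_1,L_0,L_1,L_2)$.

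For $B$ I would split into the sub-cases $L_k\gtrsim N_0$, $k\in\{0,1,2\}$ (one holds, by the gain), and in each group the two factors \emph{not} carrying $L_k$, placing their product on the remaining slab and applying Proposition \ref{Prop.1.1}: for $k=1$ and $k=2$ these are the estimates for $\|P_{K^-_{N_1,L_1}}(fg_2)\|_{L^2}$ resp.\ $\|P_{K^+_{N_2,L_2}}(fg_1)\|_{L^2}$, literally of the form in Proposition \ref{Prop.1.1} (with $f$ the $c$-factor), while for $k=0$ it is the estimate for $\|P_{K^{\pm,c}_{N_0,L_0}}(g_1g_2)\|_{L^2}$, i.e.\ the version of Proposition \ref{Prop.1.1} with the $c$-slab as output (valid because both the statement and the proof in \cite{S} are symmetric in the three frequency--modulation slabs). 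With $\epsilon:=b-\half+$ and $N_0\lesssim N_1\sim N_2$, this yields $B\lesssim N_1^{-1}N_0^{1+\epsilon}$ times a product of the two \emph{other} modulations (for $k=1,2$) or $B\lesssim N_1^{-\half-\epsilon}N_0^{\half+2\epsilon}$ times a product of $L_1,L_2$ (for $k=0$) — in each case independent of $L_k$ — and the analogous bounds for $I_2$, with $N_0N_1^{-1}N_2^{-1}$ in place of $N_1^{-1}$ and the $f$-weight $N_0^{-s}L_0^{-b}$ replaced by $N_0^sL_0^{b-1+}$. Inserting the remaining Bourgain weights, summing the variable $L_k$ against its lower bound via $\sum_{L_k\gtrsim N_0}L_k^{-b}\lesssim N_0^{-b}$ (and $\sum_{L_0\gtrsim N_0}L_0^{b-1+}\lesssim N_0^{b-1+}$ when $k=0$ for $I_2$), then summing the remaining two modulations (each with a net negative power, the at most one borderline $L^{0+}$ absorbed by $L\le N_1$), and finally summing $N_0\lesssim N_1$, one checks in every case that the surviving exponent of $N_1$ is $-\half+\epsilon-s+$ (or smaller), which is negative for $s>-\half$ once $\epsilon$ is small; a dyadic summation in $N_1$ then closes both estimates. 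The case $N_0\sim1$ needs no separate treatment — the gain is then vacuous, but the same computation still produces a negative power of $N_1$.

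The hard part is this last summation. Proposition \ref{Prop.2.6} gives only the weak gain $L_{max}\gtrsim N_0$ (not $\gtrsim N_1$), so to absorb the unavoidable positive factor of size $\sim N_0^{1-s}$ — which sums to $\sim N_1^{1-s}$ over $N_0\lesssim N_1$ — all the way down to $s=-\half$, one must in each sub-case select precisely the grouping in which $L_{max}$ enters \emph{only} through the Bourgain weight, so that $\sum_{L_{max}\gtrsim N_0}L_{max}^{-b}\lesssim N_0^{-b}$ is available, and simultaneously exploit that the near-antipodal angular decomposition is lossless; making these two facts interlock uniformly in $s>-\half$ is the delicate bookkeeping. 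A minor additional point is the ``output $c$-slab'' reformulation of Proposition \ref{Prop.1.1} used in the sub-case $L_0\gtrsim N_0$.
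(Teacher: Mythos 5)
Your proposal is correct and takes essentially the same route as the paper: reduce to fixed dyadic blocks and near-antipodal caps of size $M_1^{-1}$, invoke Proposition \ref{Prop.2.6} to get $L_{max}\gtrsim N_0$, apply Proposition \ref{Prop.1.1} in the grouping determined by which modulation carries the gain, and sum with $b=\half+$ using $L\le N_1$, the $O(1)$ angular multiplicity, and the remaining negative power $N_1^{-\half-s+}$. The only differences are cosmetic bookkeeping: the paper writes out just the case $L_0\gtrsim N_0$ with $\epsilon=0$ and the factor $(L_0/N_0)^{\half}\ge 1$ instead of summing the large modulation's weight, and in your scheme note that for $I_1$ with $L_2\gtrsim N_0$ the summed weight is $L_2^{b-1+}$, giving the slightly weaker but still sufficient gain $N_0^{b-1+}$.
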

\begin{proof}
	Using $L_{max} \lesssim N_1$ and $b=\half+$ it suffices to prove for  $supp \, f \subset K^{\pm,c}_{N_0,L_0}$ , $supp \, g_1 \subset  K^-_{N_1,L_1}$ , $supp \, g_2 \subset K^+_{N_2,L_2}$ the estimate
	\begin{align*}
	 |I_1(f,{g_1}_{|Q^{j_1}_{M_1}},{g_2}_{|Q^{j_2}_{M_1}})| \lesssim N_0^s N_1^{s-} N_2^{-s} (L_0 L_1 L_2)^{\half} \|f\|_{L^2} \|{g_1}_{|Q^{j_1}_{M_1}}\|_{L^2} \|{g_2}_{|Q^{j_2}_{M_1}}\|_{L^2} \, . 
	 \end{align*}
	By Prop. \ref{Prop.2.6} we obtain $L_{max} \gtrsim N_0$ . Let $L_0 \gtrsim N_0$ (the other cases being similar). We obtain by Prop. \ref{Prop.1.1}:
	\begin{align*}
	 &|I_1(f,{g_1}_{|Q^{j_1}_{M_1}},{g_2}_{|Q^{j_2}_{M_1}})| \lesssim N_1^{-1} \|\chi_{K^{\pm,c}_{N_0,L_0}} ({g_1}_{|Q^{j_1}_{M_1}}{g_2}_{|Q^{j_2}_{M_1}})\|_{L^2} \|f\|_{L^2} \\
	 & \lesssim N_1^{-1} N_0^{\half} N_1^{\half} L_1^{\half} L_2^{\half} \frac{L_0^{\half}}{N_0^{\half}} \|{g_1}_{|Q^{j_1}_{M_1}}\|_{L^2} \|{g_2}_{|Q^{j_2}_{M_1}}\|_{L^2} \|f\|_{L^2} \, .
	\end{align*}
	It remains to prove $N_1^{-\half} \lesssim N_0^s N_1^{0-}$ , which applies for $s > -\half$ . Moreover we need
	$$ |I_2(f,{g_1}_{|Q^{j_1}_{M_1}},{g_2}_{|Q^{j_2}_{M_1}})| \lesssim N_0^{-s} N_1^{s-} N_2^{s} (L_0 L_1 L_2)^{\half} \|f\|_{L^2} \|{g_1}_{|Q^{j_1}_{M_1}}\|_{L^2} \|{g_2}_{|Q^{j_2}_{M_1}}\|_{L^2} \, . $$ 
	By Prop. \ref{Prop.2.6} we obtain similarly
	\begin{align*}
	&|I_2(f,{g_1}_{|Q^{j_1}_{M_1}},{g_2}_{|Q^{j_2}_{M_1}})| \lesssim N_0 N_1^{-1}  N_2^{-1} \|\chi_{K^{\pm,c}_{N_0,L_0}} ({g_1}_{|Q^{j_1}_{M_1}},{g_2}_{|Q^{j_2}_{M_1}})\|_{L^2} \|f\|_{L^2} \\
	& \lesssim N_0 N_1^{-1} N_2^{-1} N_0^{\half} N_1^{\half} L_1^{\half} L_2^{\half} \frac{L_0^{\half}}{N_0^{\half}} \|{g_1}_{|Q^{j_1}_{M_1}}\|_{L^2} \|{g_2}_{|Q^{j_2}_{M_1}}\|_{L^2} \|f\|_{L^2} \, .
	\end{align*}
	This implies the desired estimate, because
	$$ N_0 N_1^{-\frac{3}{2}} \lesssim N_0^{-s} N_1^{2s-} \, \Longleftrightarrow \, N_0^{1+s} \lesssim N_1^{\frac{3}{2}+2s-} $$ if $ s >- \half$ .
	\end{proof}

\begin{prop}
Assume $s >-\half$ and $1 \le N_0 \lesssim N_1 \sim N_2$ , $N_1 \ge 2^{-4} (1-c) N_2$ and $ N_2 \ge 2^{-4} (1-c) N_1$ , $M_1$ , $I_1$ , $I_2$ as in Prop.  \ref{Prop.2.7} , $ L_{max} \ll N_1$ . Then the following estimates apply:
\begin{align*}
&\sum_{N_1} \sum_{\stackrel{1 \le N_0 \lesssim N_1 \sim N_2}{N_1 \ge \frac{1-c}{2^4} N_2 , N_2 \ge \frac{1-c}{2^4} N_1}} \sum_{L_0,L_1,L_2 \le N_1} \sum_{1 \ll A \le M_1} \sum_{\stackrel{-A \le j_1,j_2\le A-1}{\frac{\pi}{2A} \le |\pi - \alpha(j_1,j_2)| \le \frac{2\pi}{A}}} \hspace{-1.7em}
|I_1(f,{g_1}_{|Q^{j_1}_{A}},{g_2}_{|Q^{j_2}_{A}})| 
\\& \quad\lesssim \|f\|_{X^{s,b}_{\pm,c}} \|g_1\|_{X^{s,b}_-} \|g_2\|_{X^{-s,1-b-}_+} \\
&\sum_{N_1} \sum_{\stackrel{1 \le N_0 \lesssim N_1 \sim N_2}{N_1 \ge \frac{1-c}{2^4} N_2 , N_2 \ge \frac{1-c}{2^4} N_1}} \sum_{L_0,L_1,L_2 \le N_1}  \sum_{1 \ll A \le M_1} \sum_{\stackrel{-A \le j_1,j_2\le A-1}{\frac{\pi}{2A} \le |\pi - \alpha(j_1,j_2)| \le \frac{2\pi}{A}}} \hspace{-1.7em}
|I_2(f,{g_1}_{|Q^{j_1}_{A}},{g_2}_{|Q^{j_2}_{A}})| 
\\& \quad \lesssim \|f\|_{X^{-s,1-b-}_{\pm,c}} \|g_1\|_{X^{s,b}_-} \|g_2\|_{X^{s,b}_+} \, . 
\end{align*}	
\end{prop}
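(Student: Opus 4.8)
The plan is to imitate the scheme of Proposition~\ref{Prop.2.7}. Using $L_{max}\ll N_1$ and $b=\half+$, I would first reduce, exactly as there, to the two fixed-block estimates: for $f=P_{K^{\pm,c}_{N_0,L_0}}f$, $g_1=P_{K^-_{N_1,L_1}}g_1$, $g_2=P_{K^+_{N_2,L_2}}g_2$ with $1\le N_0\lesssim N_1\sim N_2$ (precisely $N_1\le 2^4(1-c)^{-1}N_2$ and $N_2\le 2^4(1-c)^{-1}N_1$),
\begin{align*}
\sum_{1\ll A\le M_1}\sum_{\stackrel{-A\le j_1,j_2\le A-1}{\frac{\pi}{2A}\le\pi-\alpha(j_1,j_2)\le\frac{2\pi}{A}}}|I_1(f,P_{Q^{j_1}_A}g_1,P_{Q^{j_2}_A}g_2)| & \lesssim N_0^{s}N_1^{s-}N_2^{-s}(L_0L_1L_2)^{\half}\|f\|_{L^2}\|g_1\|_{L^2}\|g_2\|_{L^2}\, ,\\
\sum_{1\ll A\le M_1}\sum_{\stackrel{-A\le j_1,j_2\le A-1}{\frac{\pi}{2A}\le\pi-\alpha(j_1,j_2)\le\frac{2\pi}{A}}}|I_2(f,P_{Q^{j_1}_A}g_1,P_{Q^{j_2}_A}g_2)| & \lesssim N_0^{-s}N_1^{s-}N_2^{s}(L_0L_1L_2)^{\half}\|f\|_{L^2}\|g_1\|_{L^2}\|g_2\|_{L^2}\, ;
\end{align*}
the subsequent dyadic summations in $N_0,N_1,N_2$ and $L_0,L_1,L_2$ then proceed word for word as in the proof of Proposition~\ref{Prop.2.7}, $L_{max}\ll N_1$ being used once more to absorb the factor $(L_0L_1L_2)^{\half}$ against the weights.

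To prove these fixed-block estimates I would, for each dyadic $A$ with $1\ll A\le M_1$, apply Proposition~\ref{Prop.2.5} to $P_{Q^{j_1}_A}g_1$ and $P_{Q^{j_2}_A}g_2$ for every admissible pair $(j_1,j_2)$; since $I_1=N_1^{-1}I$ and $I_2=N_0N_1^{-1}N_2^{-1}I$, this yields
$$|I_1(f,P_{Q^{j_1}_A}g_1,P_{Q^{j_2}_A}g_2)|\lesssim N_1^{-1}N_0^{\half}A^{7/8}(L_0L_1L_2)^{\half}\|f\|_{L^2}\|P_{Q^{j_1}_A}g_1\|_{L^2}\|P_{Q^{j_2}_A}g_2\|_{L^2}$$
and likewise for $I_2$ with the additional factor $N_0N_2^{-1}$. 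Next I would sum over $(j_1,j_2)$: the constraint $\frac{\pi}{2A}\le\pi-\alpha(j_1,j_2)\le\frac{2\pi}{A}$ forces the centre of $\omega^{j_2}_A$ into a cap of radius $\sim A^{-1}$ about the antipode of the centre of $\omega^{j_1}_A$, so each $j_1$ has only $O(1)$ admissible partners $j_2$ and vice versa; Cauchy-Schwarz together with the almost-disjointness of $\{Q^j_A\}$ then gives $\sum_{(j_1,j_2)}\|P_{Q^{j_1}_A}g_1\|_{L^2}\|P_{Q^{j_2}_A}g_2\|_{L^2}\lesssim\|g_1\|_{L^2}\|g_2\|_{L^2}$, uniformly in $A$. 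Finally, summing over the dyadic $A$ with $1\ll A\le M_1$ contributes $\sum A^{7/8}\lesssim M_1^{7/8}$, and since $N_1\sim N_2$ forces $M_1\sim_c N_1N_0^{-1}$ the total prefactor becomes $N_1^{-1}N_0^{\half}(N_1N_0^{-1})^{7/8}=N_0^{-3/8}N_1^{-1/8}$ for $I_1$, and $N_0N_1^{-1}N_2^{-1}N_0^{\half}(N_1N_0^{-1})^{7/8}\sim_c N_0^{5/8}N_1^{-9/8}$ for $I_2$, up to a constant depending only on the fixed number $c$.

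It then remains to check the exponent arithmetic. For $I_1$ one needs $N_0^{-3/8}N_1^{-1/8}\lesssim_c N_0^{s}N_1^{s-}N_2^{-s}\sim_c N_0^{s}N_1^{0-}$, i.e.\ $N_0^{-3/8-s}\lesssim_c N_1^{1/8-}$, which is trivial for $s\ge-3/8$ and, for $-\half<s<-\frac{3}{8}$, follows from $N_0\lesssim N_1$ since then $-3/8-s<1/8$. For $I_2$ one needs $N_0^{5/8}N_1^{-9/8}\lesssim_c N_0^{-s}N_1^{s-}N_2^{s}\sim_c N_0^{-s}N_1^{2s-}$, i.e.\ $N_0^{5/8+s}\lesssim_c N_1^{2s+9/8-}$, which is handled the same way using $5/8+s<2s+9/8$. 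In both cases the open condition $s>-\half$ is precisely what produces the small negative $N_1$-exponent needed for the subsequent dyadic sum to converge; the endpoint $s=-\half$ is the genuine borderline, in accordance with the scaling heuristic. The exceptional range $N_0\lesssim 1$, in which Proposition~\ref{Prop.2.5} is not available, is treated directly by H\"older's inequality (or by Proposition~\ref{Prop.1.1}), exactly as in the proof of Corollary~\ref{Cor.2.1}.

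I expect the main work here to be organizational rather than analytic: all the genuine harmonic analysis — the restriction-type bilinear estimate on the relevant pair of conic surfaces — is already encapsulated in Proposition~\ref{Prop.2.5}, and the task is to carry out the two summations, over the angular sectors $(j_1,j_2)$ at each scale and over the dyadic scales $1\ll A\le M_1$, and to verify that the geometric growth $A^{7/8}$ summed up to $A=M_1\sim_c N_1/N_0$ is exactly compensated by the prefactors $N_1^{-1}$, resp.\ $N_0N_1^{-1}N_2^{-1}$, together with the Sobolev weights. This compensation closes only in the open range $s>-\half$, which is the crucial point.
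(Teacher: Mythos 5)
Your proposal is correct and follows essentially the same route as the paper: reduce via $L_{max}\ll N_1$ and $b=\half+$ to the fixed-block estimates, apply Proposition \ref{Prop.2.5} on each angular pair, use the antipodal constraint plus Cauchy--Schwarz to sum over $(j_1,j_2)$, sum $A^{7/8}$ up to $M_1\sim N_1N_0^{-1}$, and close the exponent count $N_0^{-s-3/8}\lesssim N_1^{1/8-}$, resp.\ $N_0^{s+5/8}\lesssim N_1^{9/8+2s-}$, for $s>-\half$, with the range $N_0\lesssim 1$ handled by H\"older. Your write-up in fact makes explicit the $O(1)$-partner counting in the $(j_1,j_2)$ sum, which the paper leaves implicit, but the argument is the same.
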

\begin{proof}
We only treat the case $1 \ll N_0$ . Otherwise the result is easily obtained by H\"older's inequality.
	Using $L_{max} \lesssim N_1$ and $b=\half+$ it suffices to prove for  $supp \, f \subset K^{\pm,c}_{N_0,L_0}$ , $supp \, g_1 \subset  K^-_{N_1,L_1}$ , $supp \, g_2 \subset K^+_{N_2,L_2}$ the estimate
\begin{align*}
 \sum_{1 \ll A \le M_1} \sum_{\stackrel{-A \le j_1,j_2\le A-1}{\frac{\pi}{2A} \le |\pi - \alpha(j_1,j_2)| \le \frac{2\pi}{A}}} &        |I_1(f,{g_1}_{|Q^{j_1}_A},{g_2}_{|Q^{j_2}_A})| \\
 &\lesssim N_0^s N_1^{s-} N_2^{-s} (L_0 L_1 L_2)^{\half} \|f\|_{L^2} \|{g_1}\|_{L^2} \|{g_2}\|_{L^2} \, . 
 \end{align*}
By Prop. \ref{Prop.2.5} we obtain:
\begin{align*}
&\sum_{1 \ll A \le M_1} \sum_{\stackrel{-A \le j_1,j_2\le A-1}{\frac{\pi}{2A} \le |\pi - \alpha(j_1,j_2)| \le \frac{2\pi}{A}}}  |I_1(f,{g_1}_{|Q^{j_1}_A},{g_2}_{|Q^{j_2}_A})| \\
& \lesssim \sum_{1 \ll A \le M_1} \sum_{\stackrel{-A \le j_1,j_2\le A-1}{\frac{\pi}{2A} \le |\pi - \alpha(j_1,j_2)| \le \frac{2\pi}{A}}}  N_1^{-1} N_0^{\half}  (L_0 L_1 L_2)^{\half} A^{\frac{7}{8}} \|{g_1}_{|Q^{j_1}_A}\|_{L^2} \|{g_2}_{|Q^{j_2}_A}\|_{L^2} \|f\|_{L^2} \\
& \lesssim  N_1^{-1} N_0^{\half}  N_1^{\frac{7}{8}} N_0^{-\frac{7}{8}}(L_0 L_1 L_2)^{\half}  \|{g_1}\|_{L^2} \|{g_2}\|_{L^2} \|f\|_{L^2} 
 \end{align*}
by $A \le M_1 \sim N_1 N_0^{-1}$ .
It remains to prove $N_1^{-1} N_0^{\half} N_1^{\frac{7}{8}} N_0^{-\frac{7}{8}}\lesssim N_0^s N_1^{0-} \, \Leftrightarrow \, N_0^{-s-\frac{3}{8}} \lesssim N_1^{\frac{1}{8}-}$ , which applies for $s > -\half$ . Moreover we need
\begin{align*}
 \sum_{1 \ll A \le M_1} \sum_{\stackrel{-A \le j_1,j_2\le A-1}{\frac{\pi}{2A} \le |\pi - \alpha(j_1,j_2)| \le \frac{2\pi}{A}}} &|I_2(f,{g_1}_{|Q^{j_1}_A},{g_2}_{|Q^{j_2}_A})| \\
 &\lesssim N_0^{-s} N_1^{s-} N_2^{s} (L_0 L_1 L_2)^{\half} \|f\|_{L^2} \|{g_1}\|_{L^2} \|{g_2}\|_{L^2} \, . 
\end{align*} 
By Prop. \ref{Prop.2.5} we obtain 
\begin{align*}
&\sum_{1 \ll A \le M_1} \sum_{\stackrel{-A \le j_1,j_2\le A-1}{\frac{\pi}{2A} \le |\pi - \alpha(j_1,j_2)| \le \frac{2\pi}{A}}}|I_2(f,{g_1}_{|Q^{j_1}_A},{g_2}_{|Q^{j_2}_A})|  \\
& \lesssim 
 \sum_{1 \ll A \le M_1} \sum_{\stackrel{-A \le j_1,j_2\le A-1}{\frac{\pi}{2A} \le |\pi - \alpha(j_1,j_2)| \le \frac{2\pi}{A}}}N_0 N_1^{-1} N_2^{-1} N_0^{\half}  (L_0 L_1 L_2)^{\half} A^{\frac{7}{8}} \\ & \hspace{10em} \cdot
 \|{g_1}_{|Q^{j_1}_A}\|_{L^2} \|{g_2}_{|Q^{j_2}_A}\|_{L^2} \|f\|_{L^2} \\
 & \lesssim N_0 N_1^{-1} N_2^{-1} N_0^{\half} N_1^{\frac{7}{8}} N_0^{-\frac{7}{8}} (L_0 L_1 L_2)^{\half}   \|{g_1}\|_{L^2} \|{g_2}\|_{L^2} \|f\|_{L^2}
\end{align*}
by $A \le M_1 \sim N_1 N_0^{-1}$ . This implies the desired estimate, because
$$ N_0^{\frac{3}{2}} N_1^{-2} N_1^{\frac{7}{8}} N_0^{-\frac{7}{8}} \lesssim N_0^{-s} N_1^{2s-} \, \Longleftrightarrow \, N_0^{s+\frac{5}{8}} \lesssim N_1^{\frac{9}{8}+2s-} $$ if $ s >- \half$ .
\end{proof}
\begin{proof}[Proof of Proposition \ref{Theorem2.1}]
Summarizing the results in chapter 1 and chapter 2 we have proven the estimates (\ref{1.5}) and (\ref{1.6}) whenever $ s >-\half$ , and thus also Proposition \ref{Theorem2.1}.
\end{proof}
\begin{proof}[Proof of Theorem \ref{Theorem0.1}]
	By well-known arguments (cf. e.g. \cite{GTV}) the bilinear estimates in Proposition \ref{Theorem2.1} directly imply Theorem \ref{Theorem0.1}. Corollary \ref{Cor.1} and the subsequent remark are also standard consequences of the used iteration argument in Bourgain-Klainerman-Machedon spaces (cf. also e.g. \cite{GTV}).
	\end{proof}

\end{document}